\numberwithin{equation}{section}
\theoremstyle{plain}
\newtheorem{lema}{Lemma}[section]
\newtheorem{teo}{Theorem}[section]
\newtheorem{prop}{Proposition}[section]
\theoremstyle{definition}
\newtheorem{ap}{Assumption}[section]
\newtheorem{defin}{Definition}[section]
\theoremstyle{remark}
\newtheorem*{rk}{Remark}
\newcommand{\nn}{\mathbf}
\newcommand{\nns}{\boldsymbol}
\newcommand{\Hcal}{\mathcal H}
\begin{document}

\begin{frontmatter}
\title{Parameter
  Estimation of Gaussian Stationary Processes using the Generalized Method of Moments.}
\runtitle{Parameter Estimation of Gaussian Stationary Processes}

\begin{aug}
\author{\fnms{Luis A.} \snm{Barboza}
\ead[label=e1]{luisalberto.barboza@ucr.ac.cr}}

\address{Centro de Investigaci\'on en Matem\'atica Pura y Aplicada (CIMPA) \\
Universidad de Costa Rica\\
San Jos\'e, Costa Rica.\\
\printead{e1}}

\author{\fnms{Frederi G.} \snm{Viens}
\ead[label=e2]{
viens@stt.msu.edu}}

\address{Department of Statistics and Probability, Michigan State University\\
East Lansing, MI, USA.\\
\printead{e2}
}

% \thankstext{t1}{Some comment}
% \thankstext{t2}{First supporter of the project}
% \thankstext{t3}{Second supporter of the project}
\runauthor{Barboza, L. and Viens, F.}

%\affiliation{Some University and Another University}

\end{aug}

\begin{abstract}
  We consider the class of all stationary Gaussian process with explicit parametric spectral
  density. Under some conditions on the autocovariance function, we defined a
  GMM estimator that satisfies consistency and asymptotic normality, using the Breuer-Major 
  theorem and previous results on ergodicity.  This result is
  applied to the joint estimation of the three parameters of a
  stationary Ornstein-Uhlenbeck (fOU) process driven by a fractional Brownian
  motion. The asymptotic normality of its GMM estimator applies for any $H$ in
  $(0, 1)$ and under some restrictions on the remaining parameters.
  A numerical study is performed in the fOU case, to illustrate the estimator's
  practical performance when the number of datapoints is moderate.
\end{abstract}

\begin{keyword}[class=MSC]
\kwd[Primary ]{62M09}
\kwd{62F10}
\kwd[; secondary ]{62F12}
\end{keyword}

\begin{keyword}
\kwd{Fractional Brownian Motion}
\kwd{Ornstein Uhlenbeck process}
\kwd{Method of Moments}
\end{keyword}
\tableofcontents
\end{frontmatter}

\section{Introduction}\label{sec:intro}
In this article we are interested in the parameter estimation of a stationary Gaussian process with explicit spectral density, which applies in particular to the stationary solution of the Ornstein-Uhlenbeck equation driven by the fractional Brownian motion, also known as the fOU process. The main assumption on the data is that a single trajectory of the process is observed at predetermined discrete times, with a fixed length of observation time, and an increasing horizon. Thus this study, which applies to continuous-time Gaussian stochastic processes, also falls within the realm of Gaussian time series with complex parametric auto-covariances.   

It is well known that the Maximum Likelihood estimation (MLE) is the preferred method of estimation because of its asymptotic optimality among densities with regular properties and independent samples (see for example, \cite{Schervish1997}). If the samples are generated from a single path of a stationary process, they can be far from independent, and the asymptotic properties of the MLE can be deduced, but this would depend on their distributional assumptions. For example, one of the most studied classes of stochastic processes is the Gaussian one, mainly because of its wide use in modeling events which have some degree of temporal dependence, particularly when joint-distributional information need only rely on the first two moments, since no further information is needed in the Gaussian case. In a particular instance, the MLE estimator of a stationary ARMA process is strongly consistent and asymptotically efficient (see section 10.8 in \cite{libroamarillo}). As an interesting generalization of the above fact, \cite{Pham-Dinh-Tuan1977} studies the parameter estimation of a Gaussian process with rational spectral density, using a modification of the likelihood function.  

Another interesting example in which the MLE uses spectral information is the estimation of self-similar processes. The best known example of such processes in a Gaussian setting is the fractional Brownian motion (fBm). Its applications have been widely studied in many areas including hydrology, finance, climatology, and others. Jan Beran explains in more detail its definitions and properties in \cite{Beranbook} as well as another example of interest in applications: the fARIMA process. The estimation of the fractal dimension $H$ of the fBm process, also known as Hurst parameter, which turns out to be the same parameter determining the slow power speed of decay of the auto-correlation function for fBm's stationary increments, namely $n^{2H-2}$, has been researched from different points of view: exact and approximate Maximum Likelihood using spectral information (\cite{Fox1986}, \cite{dahlhaus1989} and \cite{Lieberman2011}), discrete variations of the sample paths (\cite{istas}, \cite{Coeurjolly2001}, \cite{TV2}, \cite{TV1} and \cite{CTV2}), and wavelets (\cite{Bardet2000}, \cite{Flandrin1992}, \cite{Moulines2007}). 

A natural generalization of the above example is to consider processes whose variogram is (see \cite{istas}):
\begin{align*}
  v(t)=C|t|^s+o(|t|^s).
\end{align*}
If $s=2H$, a sufficient condition for the above to be true is that the spectral density can be written as a power law with degree $-1-2H$ plus a remainder (see equation (5) in \cite{bierme2008estimation}). It is possible to estimate the parameter $H$ of such processes using the quadratic variation of a filtered process, which will be a fundamental idea in our suggested method, though the interpretation of $H$ as a memory parameter for discrete observation, as in our study (where what matters is the small-$t$ behavior of the variogram), is not the same as its interpretation for use with in-fill asymptotics, where how $H$ determines self-similarity or path regularity is what relates it to estimation schemes. The fractional Onrstein-Uhlenbeck (fOU) process satisfies the above-mentioned condition from \cite{istas}; see \cite{cheridito} for an exhaustive definition and initial study. We also give a brief review of its definition in section \ref{sec:introductionfOU}. 

Kleptsyna and Le Breton \cite{klebreton} proved that a maximum likelihood
estimator of the drift parameter $\lambda$ for the fOU process (in the
non-stationary case) can be obtained using similar techniques to the case
where $H=\frac 1 2$. This is attained by considering $H\in [\frac 1 2,1)$. The
main difference between \cite{klebreton} and \cite{lipster} is the presence of
the fractional kernels within the likelihood-estimate formulas, due to the
generalization of the Girsanov formula to the fBm case. They use this formula
to obtain an MLE of $\lambda$ which is strongly consistent. Furthermore, they
compute approximations of the bias and MSE of their estimator. Tudor and this
paper's second author \cite{tudorviensfbm} extended the previous result to
fractional SDEs where the drift parameter is not necessarily linear. They
proved strong consistency of the MLE estimator even in the case $H<\frac 1 2$,
using Malliavin calculus techniques, together with bias and MSE approximations
for their estimate of $\lambda$. They also provide a discretized estimator
which is still strongly consistent under certain conditions on the drift. A
more recent estimate of $\lambda$ (assuming $\lambda>0$) was developed by Hu
and Nualart \cite{Hu}. They propose a least-squares estimate which is written
in terms of an It\^o stochastic integral, but its numerical manipulation is
difficult when $H>\frac 1 2$; the estimator cannot be implemented directly
when $H<\frac 1 2$ since then, the stochastic integral would have to be
interpreted in the Skorohod sense, %and the latter cannot be evaluated
% measurably with respect to the fOU process's sigma-field.
and the latter cannot be evaluated measurably with respect to the fOU process's
sigma-field, as it would be if the integral were a classical Wiener or
Wiener/Ito integral; in other words, because the Skorohod integral is not a
stochastic integral in the classical sense, but rather is a divergence operator
(it is the adjoint of a derivative operator, and cannot be interpreted as an
antiderivative in the case of fBm-based models), information from the entire
path of the noise term driving of the fOU process would be required, which is
not an explicit functional of the observations of the fOU process itself. The estimate is
strongly consistent for all $H\geq \frac 1 2$ and it turns out that for $H \in
\left[\frac 1 2,\frac 3 4\right)$ the least-squares estimate satisfies a
classical CLT with a variance depending on the unknown parameter
$\lambda$. Moreover, they improved the numerical applicability of the previous
estimate by introducing a new one in terms of a Riemann integral of the
observed process $X_t$ ($H \geq \frac 1 2$), and this estimate turns out to be strongly consistent and satisfies a CLT for $H \in \left[\frac 1 2,\frac 3 4\right)$.
The proofs of strong consistency and the CLT for both estimates rely on
Malliavin calculus techniques, in particular, the Nualart-Pecatti-Ortiz
characterization of normal convergence for multiple stochastic integrals (see
\cite{nualartortiz}). In a more recent study, Brouste and Iacus (see
\cite{Brouste2012}) proved that the variogram of a non-stationary fOU process
satisfies the regularity assumptions of Istas and Lang (see \cite{istas}) and
hence they were able to give a joint estimator of $(H,\sigma^2)$ when
$\lambda$ is fixed. In fact, they proved that it is strongly consistent and
asymptotically normal. When $H\in (\frac 1 2,\frac 3 4)$ they also gave an
estimator of $\lambda$ based on Hu and Nualart's results in \cite{Hu}, under a
certain combination of increasing-domain and infill asymptotics. For $H$ and
$\sigma$ fixed, their estimate is strongly consistent and asymptotically
normal as well. Recently \cite{Es-Sebaiy2015} generalized the previous results
to the estimation of drift and scaling parameters of essentially any
long-memory stationary Gaussian process. That paper was written in response to
their own paper jointly with El Onsy \cite{ElOnsy2014} and several other recent papers cited in \cite{Es-Sebaiy2015} which are other special cases of their framework. However, their method does not appear to handle any memory parameter, nor do any of the methods cited in their work. 

As we mentioned from the outset, our interest is to estimate the parameters in stochastic differential equations using discrete observations, which can be particularly useful in the areas of econometrics and
finance; indeed these fields' long-memory stochastic modeling are typically limited in how frequently data can be observed. Section section
\ref{sec:numer-perf-gmm} contains a brief and specific analysis of such a situation in the case of financial data. Unlike the cases described in the references above, the asymptotics are taken when the
number of observations increases, but the time length among observations remains
fixed. In the case of a fOU process this problem has been studied by \citep{Hu2013} for the
estimation of $\lambda$  when the Hurst parameter is fixed and is
greater than $\frac 1 2$. The joint estimation of the pair $(\lambda,\sigma)$
has been solved for discrete observations in \cite{Xiao2011} when $H$ is kept
fixed in $\left(\frac 1 2,\frac 3 4\right)$. It is important to note that
\cite{Zhang2014} solves the parameter estimation of the entire set of
parameters $(H,\lambda,\sigma)$ for a fixed interval and infill asymptotics, i.e. drawing on availability of data with arbitrarily high sampling frequency. 

In the present article, using the Generalized
Method of Moments, we show how to achieve the joint estimation of any finite vector of parameters for stationary Gaussian sequences at the highest level of generality, using
discrete observations, without requiring infill asymptotics, and we show that our setting applies directly to the joint estimation of $H$, $\lambda$ and
$\sigma$ for the fOU process. Normal asymptotics are established thanks to the Breuer-Major theorem, and comments on efficiency are included. Our work is the first attempt to solve the joint estimation of the fOU process for its three parameters using a classical technique, without using infill asymptotics at all. We show via numerics for the fOU process that our technique is easily implementable, with good empirical precision and robustness properties even when operating with a moderate and realistic dataset size.

An interesting phenomenon occurs in the case of long-memory sequences such as discretely-observed fOU: if using only a Method of Moments based directly on the terms of the sequence itself, not its finite-differences, any estimator based on quadratic variations fails to be asymptotically normal when $H>3/4$. This is a relatively well-known phenomenon, since that range of $H$ falls beyond the Breuer-Major theorem's scope, though typically authors do not seek results in that range. Arguably, such results would be unnecessary since one only needs to consider one order filtered observations to avoid the non-normal asymptotics. However, it is instructive to investigate this case a bit further. Here we provide some calculations for the fOU case showing that the quadratic variation without any filtering cannot converge to a normal law, or even, which is more surprising, to a second-chaos law, but that it remains the basis for a strongly consistent estimator of the variance nonetheless. This requires the use of the Malliavin calculus.

This article is structured as follows: section \ref{sec:Jointgeneral} defines our GMM estimator for
any stationary Gaussian process and states its general properties:
consistency, asymptotic normality and efficiency. Section \ref{sec:param-estim-fract} is an application
of the above results to the joint parameter estimation of the stationary solution of the fractional
Ornstein-Uhlenbeck SDE. Finally, section
\ref{sec:numer-perf-gmm} analyzes the numerical performance of the GMM
estimator under a simulation study.  Appendix \ref{sec:additional-tables} contain additional numerics. Appendix \ref{sec:appendix-proofs} contains mathematical proofs of technical lemmas and other results in the paper. Appendix \ref{sec:behavior-g_0-when} contains the convergence results, with proofs, for the non-finite-differenced estimator for fOU when $H>3/4$.

\section{Joint Estimation of Gaussian Stationary processes}\label{sec:Jointgeneral}
Let $X_t$ be a real-valued centered gaussian stationary process with spectral
density $f_{\theta_0}(x)$, where $f_\theta(x)$ is a continuous function with
respect to $x$, continuously differentiable with respect to $\theta$ ($\theta$
represents a vector of paremeters and it
belongs to a compact set $\Theta \subset \mathbb R^p$) and $\theta_0 \in
\text{Interior}(\Theta)$. Hereafter, $\theta_0$ will be the true parameter of $X_t$. By Bochner's theorem, the autocovariance function of $X_t$ can be written as:
\begin{align}\label{bochner}
  \rho_\theta(s):=\text{Cov}(X_{t+s},X_t)=\int_{\mathbb R}e^{isx}f_\theta(x)dx=\int_{\mathbb R}\cos(sx)f_\theta(x)dx
\end{align}
for $s\geq 0$ and $\theta \in \Theta$. 

Note that if we assume that $\rho_\theta(s)$ is a continuous function of $s$,
then by \cite{grenander} (page 257), the process $X_t$ is ergodic. Take a
positive integer $L\geq p$, and define:
\begin{align}\label{eq:11}
  \nns{\rho}_\theta(\alpha):=(\rho_\theta(\alpha \cdot 0),\ldots,\rho_\theta(\alpha \cdot L))'.
\end{align}
Now we have the condition that let us to identify the parameter $\theta$.
\begin{ap}[Identifiability condition]\label{sec:ap1}
  Take $\alpha>0$. Then the subvector of $\nns{\rho}_\theta(\alpha)$:
  \begin{align*}
    \nns{\rho}_{\theta,p}(\alpha)=(\rho_\theta(\alpha \cdot 0),\ldots,\rho_\theta(\alpha \cdot (p-1)))'.
  \end{align*}
is an injective map in $\theta$. Assume also that the map $\nns{\rho}_{\theta,p}(\alpha)$ is
differentiable. 
\end{ap}
\begin{rk}
  Note that the above assumption can be obtained if $\nabla_\theta
\nns{\rho}_{\theta,p}(\alpha)$ is a non-singular matrix or $\nabla_\theta
\nns{\rho}_\theta(\alpha)$ is a full-column rank matrix for any $\theta \in
\Theta$. Here we are using the notation $\nabla_\theta \cdot$ to mean the
gradient of its argument with respect to $\theta$. 
\end{rk}
The concept of filter will be used along this article. Its definition is as follows:
\begin{defin}\label{deffilters}
  A filter $\nn a=(a_0,\ldots,a_L)$ of length $L+1$ and order $l$ is a sequence of $L+1$ real numbers such that:
  \begin{align*}
    \sum_{q=0}^{L}a_qq^r&=0,\qquad 0\leq r\leq l-1,\; r\in \mathbb{Z}\\
    \sum_{q=0}^{L}a_qq^l&\neq 0
  \end{align*}
when $l>0$. When $l=0$, we will assume that $a_0=1$ and $a_q=0$ for $0<q\leq L$.
\end{defin}
In this article we will employ a family of discrete filters of length $L$ and
orders in  $\{0,\ldots,L\}$. To simplify notation, we define for a filter $a$ with order $l$:
  \begin{align}
    b_k:=
    \begin{cases}\label{eq:bk1}
      \sum_{q=0}^La_q^2 & \text{if $k=0$}\\
      2\sum_{j=0}^{L-k}a_{k+j}a_j & \text{if $k>0$}
    \end{cases}
  \end{align}
and $\nn b:=(b_0,\ldots,b_L)'$. It is straightforward to prove that given two
different orders $l_i$, $l_j$, the corresponding vectors $\nn b_i$ and $\nn b_j$
are linearly independent. For these reason we can assume the following: 
\begin{ap}\label{sec:ap2}
  Assume that we can choose $L$ filters $\nn a_1,\ldots,\nn a_L$ with
  respective orders $l_1,\ldots,l_L$ such that the $L \times L+1$ matrix:
  \begin{align*}
    B=
    \begin{bmatrix}
      \mathbf{b}_1'\\
      \mathbf{b}_2'\\
      \vdots \\
      \mathbf{b}_L'
    \end{bmatrix}
  \end{align*}
satisfies that $p \leq rank(B)\leq L$. %where $R=\max_{i=1,\ldots,L}\text{length}(\nn a_i)$.    
\end{ap}
Using this last assumption, we can define the function $V(\theta)$, for any $\theta \in \Theta$, as:
\begin{align*}
V(\theta):=\sum_{k=0}^Lb_k\rho_{\theta}(\alpha k),
\end{align*}
and the \textit{filtered process with step size $\alpha>0$} (fixed) at $t\geq 0$ as:
\begin{align*}
\varphi(t):=\sum_{q=0}^La_qX_{t-\alpha q}.
\end{align*}
for a fixed filter $\nn a$.
This filtered process was already considered in previous estimation studies, for example \cite{bierme2008estimation}, \cite{chronopoulou2009variations}, \cite{Coeurjolly2001} and \cite{istas}. Due to the stationarity of $X_t$, we can note that the variable $\varphi(t)$ is centered gaussian, hence the expected value of $\varphi(t)$ is zero and its variance is $E[\varphi(t)^2]=V(\theta_0)$, as follows:
\begin{align*}
  E[\varphi(t)^2]&=\sum_{q,q'=0}^L a_qa_{q'}E[X_{t-\alpha q}X_{t-\alpha q'}]\\
  &=\sum_{q,q'=0}^L a_qa_{q'}\rho_{\theta_0}(\alpha\cdot |q-q'|)\\
  &=\sum_{k=0}^Lb_k\rho_{\theta_0}(\alpha k)=V(\theta_0).
\end{align*}
Following the notation of \cite{mcfadden} we define the vector of differences
among the squared filtered observations for each of the $L$ filters and its
corresponding $V(\theta)$, for any $\theta \in \Theta$ as:
\begin{align*}
\nn g(t,\theta):=(g_1(t,\theta),\ldots,g_L(t,\theta))'
\end{align*}
where
\begin{align*}
g_\ell(t,\theta)=\varphi_\ell(t)^2-V_\ell(\theta),\qquad \text{for $1\leq \ell\leq L$}
\end{align*}
and the subscript $\ell$ means that we use the filter $\mathbf{a}_\ell$ in the
computation of the filtered process and its variance entrywise.
Clearly, in the jargon of Method of Moments' estimation, $\nn g(t,\theta)$ \textit{satisfies a population moment condition} (see \cite{hallgmm} and \cite{mcfadden}): 
\begin{align*}
E[\nn g(t,\theta_0)]=0
\end{align*}
for all $t\geq 0$. Also note that the vector $\nn g$ is a second-Wiener chaos
vector (see \cite{Nourdin2012}).

Let us assume that we have observed the process $X_t$ at times $0=t_0< t_1
<\cdots <t_{N-1}<t_N=T$ and fix $\alpha=t_i-t_{i-1}$. 

Let $A$ be a symmetric positive-definite matrix. We will denote $\|A\|$ as its matrix norm induced by the Euclidean norm in $\mathbb{R}^{L+1}$. It is important to note that $A$ can be chosen to ensure that the GMM estimate is efficient (see section \ref{sec:efficiency}). Denote, for each $\theta \in \Theta$, and for an arbitrary time $t\geq 0$:
\begin{itemize}
\item $\nn g_0(\theta):=E[\nn g(t,\theta)]$ (vector of expected differences)
\item $\hat {\nn g}_N(\theta):=\frac{1}{N-L+1}\sum_{i=L}^N\nn g(t_i,\theta)$
  (vector of sampled differences) 
\item $Q_0(\theta):=\nn g_0(\theta)'A\nn g_0(\theta)$ (squared distance of
  expected differences)
\item $\hat{Q}_N(\theta):=\hat {\nn g}_N(\theta)'A \hat {\nn g}_N(\theta)$
  (sampled version of the above distance)
\end{itemize}
Note that $\nn g_0(\theta)$ does not depend on time $t$, due to the
stationarity assumption on $X_t$. Finally, we define the GMM (Generalized
Method of Moments) estimator of $\theta_0$ as (see \cite{hansen82}, \cite{hallgmm}):
\begin{align}\label{GMMest}
\hat \theta_N=\text{argmin}_{\theta \in \Theta}\hat{Q}_N(\theta).
\end{align}
The main idea behind the GMM estimation is that since the function $Q_0(\theta)$ attains a unique zero at $\theta_0$ under Assumptions \ref{sec:ap1} and \ref{sec:ap2} (see Lemma \ref{unique} below), we would want to find a value $\hat \theta_N$ (that depends on the current observations) such that our weighted empirical approximation to $Q_0$, namely $\hat Q_N$, should be as small as possible.

It is easy to prove that all the remaining results are still valid if we substitute the fixed matrix $A$ with a sequence of random matrices (perhaps depending on the data) with a deterministically bounded eigenstructure. In particular, we can choose such sequence in order to attain convergence in probability to the efficient alternative of $A$. For more details see section \ref{sec:efficiency}. 
\subsection{Consistency}
In this section we will prove the strong consistency of the estimator defined in (\ref{GMMest}). Our first lemma shows that the sequence $\hat Q_N(\theta)$ is uniformly convergent for any $\theta \in \Theta$.
\begin{lema}\label{lemaunif1}
  It holds that:
  \begin{align*}
    \sup_{\theta \in \Theta}|\hat Q_N(\theta)-Q_0(\theta)|\stackrel{a.s}{\rightarrow}0.      
  \end{align*}
\end{lema}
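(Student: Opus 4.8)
The plan is to reduce the uniform convergence of the quadratic forms $\hat Q_N$ to the much simpler uniform convergence of the moment vector $\hat{\nn g}_N$ toward $\nn g_0$, and to exploit the structural fact that the dependence of $\hat{\nn g}_N(\theta)$ on $\theta$ is purely deterministic. Indeed, writing $\hat m_{N,\ell}:=\frac{1}{N-L+1}\sum_{i=L}^N\varphi_\ell(t_i)^2$, the $\ell$-th entry of $\hat{\nn g}_N(\theta)$ equals $\hat m_{N,\ell}-V_\ell(\theta)$, while the $\ell$-th entry of $\nn g_0(\theta)$ equals $V_\ell(\theta_0)-V_\ell(\theta)$. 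Subtracting, the $\theta$-dependent terms cancel, so that
\[
\hat{\nn g}_N(\theta)-\nn g_0(\theta)=\hat{\nn m}_N-\nn V(\theta_0)
\]
is independent of $\theta$, where $\hat{\nn m}_N=(\hat m_{N,1},\dots,\hat m_{N,L})'$ and $\nn V(\theta_0)=(V_1(\theta_0),\dots,V_L(\theta_0))'$. Consequently $\sup_{\theta\in\Theta}\|\hat{\nn g}_N(\theta)-\nn g_0(\theta)\|=\|\hat{\nn m}_N-\nn V(\theta_0)\|$, and the whole difficulty collapses to a single $\theta$-free limit.

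First I would show $\hat{\nn m}_N\to\nn V(\theta_0)$ almost surely. For each fixed filter, $\varphi_\ell(t_i)=\sum_{q=0}^L a_{\ell,q}X_{t_i-\alpha q}$ is a fixed linear functional of the stationary Gaussian process sampled on the equally spaced grid $t_i=i\alpha$; hence $(\varphi_\ell(t_i))_i$ is itself a centered stationary Gaussian sequence with $E[\varphi_\ell(t_i)^2]=V_\ell(\theta_0)$. Using the ergodicity recorded above (continuity of $\rho_{\theta_0}$ together with Grenander's criterion) and Birkhoff's pointwise ergodic theorem applied to the integrable functional $\varphi_\ell^2$, the Cesàro averages $\hat m_{N,\ell}$ converge almost surely to $V_\ell(\theta_0)$. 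Since there are only finitely many filters, $\|\hat{\nn m}_N-\nn V(\theta_0)\|\to0$ a.s., and therefore $\sup_{\theta\in\Theta}\|\hat{\nn g}_N(\theta)-\nn g_0(\theta)\|\to0$ a.s.

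Next I would transfer this to the quadratic forms through the polarization identity $x'Ax-y'Ay=(x-y)'A(x+y)$, valid because $A$ is symmetric, which together with Cauchy--Schwarz and the definition of the induced norm gives
\[
\sup_{\theta\in\Theta}\bigl|\hat Q_N(\theta)-Q_0(\theta)\bigr|\le\|A\|\;\Bigl(\sup_{\theta\in\Theta}\|\hat{\nn g}_N(\theta)-\nn g_0(\theta)\|\Bigr)\Bigl(\sup_{\theta\in\Theta}\|\hat{\nn g}_N(\theta)+\nn g_0(\theta)\|\Bigr).
\]
The first supremum tends to $0$ a.s. by the previous step, so it remains to bound the second uniformly. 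The map $\theta\mapsto\nn g_0(\theta)=\nn V(\theta_0)-\nn V(\theta)$ is continuous on the compact set $\Theta$ (each $V_\ell(\theta)=\sum_k b_k\rho_\theta(\alpha k)$ is continuous in $\theta$ by the regularity imposed on $f_\theta$ and differentiation under the Bochner integral), hence bounded; and $\hat{\nn g}_N(\theta)=\nn g_0(\theta)+(\hat{\nn m}_N-\nn V(\theta_0))$ is then bounded uniformly in $\theta$ for all large $N$ on the full-measure event where $\hat{\nn m}_N$ converges. Thus the second supremum is eventually dominated by a constant a.s., and the product tends to $0$ a.s., which is the claim.

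The only genuinely delicate point is the ergodic step: the pointwise ergodic theorem must be applied to the \emph{discretely sampled}, filtered sequence, not to the continuous-time process. This requires knowing that the sampled stationary Gaussian sequence $(\varphi_\ell(i\alpha))_i$ is ergodic, i.e.\ that its aliased spectral measure carries no atoms. This holds because $f_\theta\in L^1(\mathbb R)$ forces the spectral measure of $X_t$, and hence that of any fixed linear-filter sampling of it, to be absolutely continuous, so Grenander's no-atom criterion applies to the discrete skeleton. Everything else is elementary algebra and compactness, so I expect no further obstacle.
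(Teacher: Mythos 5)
Your proposal is correct and follows essentially the same route as the paper: the key cancellation making $\hat{\nn g}_N(\theta)-\nn g_0(\theta)$ independent of $\theta$, ergodicity of the (filtered, sampled) Gaussian sequence for the a.s.\ limit, and a transfer to the quadratic forms using boundedness of $\nn g_0$ on the compact $\Theta$, with your polarization identity $x'Ax-y'Ay=(x-y)'A(x+y)$ being an algebraically trivial variant of the paper's decomposition $(x-y)'A(x-y)+2(x-y)'Ay$. Your additional care in justifying ergodicity of the \emph{discretely sampled} skeleton via atomlessness of the aliased spectral measure is a sound refinement of the paper's direct appeal to Grenander's criterion, not a different method.
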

\begin{proof}
  See Appendix \ref{sec:appendix-proofs}.
\end{proof}
Under the injectivity assumptions on $\nns \rho_\theta(\alpha)$, we can prove:
\begin{lema}\label{unique}
  Under Assumptions \ref{sec:ap1} and \ref{sec:ap2}:
  \begin{align*}
    Q_0(\theta)=0.
  \end{align*}
if and only if $\theta=\theta_0$.
\end{lema}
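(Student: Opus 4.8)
The plan is to reduce the quadratic form $Q_0(\theta)$ to the vanishing of the vector $\nn g_0(\theta)$, to express $\nn g_0(\theta)$ linearly in the autocovariance vectors through the matrix $B$, and then to invoke the two structural Assumptions to pin down $\theta$.

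First I would dispose of the easy implication. When $\theta=\theta_0$, the $\ell$-th entry of $\nn g_0(\theta_0)$ equals $E[\varphi_\ell(t)^2]-V_\ell(\theta_0)=V_\ell(\theta_0)-V_\ell(\theta_0)=0$, using the identity $E[\varphi_\ell(t)^2]=V_\ell(\theta_0)$ already established in the text, so $Q_0(\theta_0)=0$ at once. For the converse, since $A$ is symmetric positive definite, the equality $Q_0(\theta)=\nn g_0(\theta)'A\nn g_0(\theta)=0$ forces $\nn g_0(\theta)=0$; this is the only place the positive-definiteness of $A$ enters the argument.

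Next I would compute $\nn g_0(\theta)$ explicitly. Entrywise, $\nn g_0(\theta)_\ell=E[\varphi_\ell(t)^2]-V_\ell(\theta)=V_\ell(\theta_0)-V_\ell(\theta)$, and since $V_\ell(\theta)=\mathbf{b}_\ell'\nns{\rho}_\theta(\alpha)$ by the definition of $V$ and of the coefficients $b_k$, stacking over $\ell=1,\dots,L$ yields the compact form
\begin{align*}
\nn g_0(\theta)=B\bigl(\nns{\rho}_{\theta_0}(\alpha)-\nns{\rho}_\theta(\alpha)\bigr),
\end{align*}
with $B$ the matrix of Assumption \ref{sec:ap2}. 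Hence $Q_0(\theta)=0$ is equivalent to the difference vector $\mathbf{d}:=\nns{\rho}_{\theta_0}(\alpha)-\nns{\rho}_\theta(\alpha)$ lying in $\ker B$, i.e. to the equality of filtered variances $B\nns{\rho}_\theta(\alpha)=B\nns{\rho}_{\theta_0}(\alpha)$.

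The final and most delicate step is to show that $B\mathbf{d}=0$ forces $\theta=\theta_0$, and here the two Assumptions must be combined: the rank condition $\operatorname{rank}(B)\ge p$ guarantees that the $L$ filtered-variance equations carry at least $p$ independent constraints on $\mathbf{d}$, while the injectivity of the reduced map $\nns{\rho}_{\theta,p}(\alpha)$ from Assumption \ref{sec:ap1} is what ultimately separates $\theta$ from $\theta_0$. The main obstacle is precisely the bridge between these two facts, since $B\mathbf{d}=0$ only asserts that $\mathbf{d}$ is annihilated by $B$, whereas Assumption \ref{sec:ap1} speaks about the leading $p$ entries of $\nns{\rho}_\theta(\alpha)$. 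To close this gap I would propagate $B\nns{\rho}_\theta(\alpha)=B\nns{\rho}_{\theta_0}(\alpha)$ to the equality of the reduced vectors $\nns{\rho}_{\theta,p}(\alpha)=\nns{\rho}_{\theta_0,p}(\alpha)$, either by extracting $p$ linearly independent rows of $B$ and inverting the resulting system for the leading autocovariances, or, following the Remark, by upgrading the full-column-rank condition on $\nabla_\theta\nns{\rho}_{\theta,p}(\alpha)$ (equivalently on $B\,\nabla_\theta\nns{\rho}_\theta(\alpha)$) to global injectivity of $\theta\mapsto B\nns{\rho}_\theta(\alpha)$. Once $\nns{\rho}_{\theta,p}(\alpha)=\nns{\rho}_{\theta_0,p}(\alpha)$ is in hand, Assumption \ref{sec:ap1} yields $\theta=\theta_0$ and the proof is complete.
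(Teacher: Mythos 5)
Your reduction coincides with the paper's up to the decisive point: the easy direction is fine, positive definiteness of $A$ correctly gives $Q_0(\theta)=0$ if and only if $\nn g_0(\theta)=0$, and the identity $\nn g_0(\theta)=B(\nns{\rho}_{\theta_0}(\alpha)-\nns{\rho}_{\theta}(\alpha))$ is exactly the paper's, so everything hinges on showing that $B\mathbf{d}=0$ forces $\theta=\theta_0$, where $\mathbf{d}:=\nns{\rho}_{\theta_0}(\alpha)-\nns{\rho}_{\theta}(\alpha)$. You correctly flag this bridge as ``the main obstacle,'' but you do not cross it, and neither of your two proposed routes can. The first route (``extract $p$ linearly independent rows of $B$ and invert for the leading autocovariances'') fails because the system is homogeneous and underdetermined: $B$ is $L\times(L+1)$, so $\ker B$ has dimension at least $1$ regardless of its rank, and even the full set of $L$ equations $B\mathbf{d}=0$ only confines $\mathbf{d}$ to a nontrivial subspace; there is no subsystem to invert that pins down the first $p$ coordinates of $\mathbf{d}$, and a nonzero kernel vector can perfectly well have nonzero leading entries --- precisely the scenario that Assumption \ref{sec:ap1} alone cannot exclude. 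The second route fails for a different reason: full column rank of $\nabla_\theta\nns{\rho}_{\theta,p}(\alpha)$, or of $B\nabla_\theta\nns{\rho}_{\theta}(\alpha)$, yields only \emph{local} injectivity of the corresponding map; upgrading a nondegenerate Jacobian to global injectivity requires additional structure (this is why the paper's fOU application invokes Gale's theorem in Lemma \ref{sec:lemainjectivity}). Moreover, global injectivity of $\theta\mapsto\nns{\rho}_{\theta,p}(\alpha)$ is exactly what Assumption \ref{sec:ap1} already posits; it does not transfer to $\theta\mapsto B\nns{\rho}_{\theta}(\alpha)$ without the very bridge you are trying to build.

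For comparison, the paper closes this step by diagonalizing $B'AB$, using $A>0$ and $p\leq\operatorname{rank}(B)$, and writing $Q_0(\theta)=\sum_{i=1}^p\lambda_i[\rho_{\theta_0}(\alpha\cdot(i-1))-\rho_{\theta}(\alpha\cdot(i-1))]^2+J(\theta)$ with $\lambda_i$ nonzero eigenvalues of $B'AB$ and a remainder $J$ with $J(\theta_0)=0$; then $Q_0(\theta)=0$ forces $\nns{\rho}_{\theta,p}(\alpha)=\nns{\rho}_{\theta_0,p}(\alpha)$, and Assumption \ref{sec:ap1} concludes. One may debate how fully that diagonalization step is spelled out (a generic eigendecomposition produces squares of linear combinations $(u_i'\mathbf{d})^2$, not squares of the individual coordinates of $\mathbf{d}$), but the point for your write-up is that the paper commits to a concrete mechanism isolating the first $p$ autocovariance differences inside $Q_0$, and that mechanism is exactly the content your plan leaves open. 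As it stands, your proposal establishes the trivial direction and the reduction to $B\mathbf{d}=0$, but not the ``only if'' half of the lemma.
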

\begin{proof}
  See Appendix \ref{sec:appendix-proofs}.
\end{proof}
And these two lemmas allow us to prove:
\begin{teo}[Strong Consistency]\label{teoconsis1}
Under Assumptions \ref{sec:ap1} and \ref{sec:ap2}, it holds that:
\begin{align*}
\hat \theta_N\stackrel{a.s.}{\rightarrow}\theta_0.  
\end{align*}
\end{teo}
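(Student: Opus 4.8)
The plan is to deduce strong consistency from the two preceding lemmas by the standard sandwiching argument for extremum estimators (as in \cite{mcfadden}), together with the compactness of $\Theta$ and the continuity of the limiting objective $Q_0$. The three structural ingredients are already in place: Lemma \ref{lemaunif1} provides the uniform almost-sure convergence $\sup_{\theta\in\Theta}|\hat Q_N(\theta)-Q_0(\theta)|\to 0$; Lemma \ref{unique} provides identification, namely that $\theta_0$ is the unique zero of $Q_0$; and since $A$ is positive definite we have $Q_0(\theta)=\mathbf{g}_0(\theta)'A\,\mathbf{g}_0(\theta)\ge 0$ for all $\theta$, with $Q_0(\theta_0)=0$, so $\theta_0$ is in fact the unique global minimizer of $Q_0$.

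First I would fix a realization in the almost-sure event on which the uniform convergence of Lemma \ref{lemaunif1} holds; all subsequent estimates are then deterministic on this event. The core step is to show $Q_0(\hat\theta_N)\to 0$. Using that $\hat\theta_N$ minimizes $\hat Q_N$, so $\hat Q_N(\hat\theta_N)\le \hat Q_N(\theta_0)$, and inserting and removing $\hat Q_N(\hat\theta_N)$ together with $Q_0(\theta_0)=0$, one obtains the chain
\begin{align*}
  0\le Q_0(\hat\theta_N)
   &= \bigl(Q_0(\hat\theta_N)-\hat Q_N(\hat\theta_N)\bigr)+\hat Q_N(\hat\theta_N)\\
   &\le \sup_{\theta}\bigl|Q_0(\theta)-\hat Q_N(\theta)\bigr|+\hat Q_N(\theta_0)\\
   &\le 2\sup_{\theta}\bigl|\hat Q_N(\theta)-Q_0(\theta)\bigr|,
\end{align*}
where the last inequality uses $\hat Q_N(\theta_0)\le Q_0(\theta_0)+\sup_{\theta}|\hat Q_N(\theta)-Q_0(\theta)|=\sup_{\theta}|\hat Q_N(\theta)-Q_0(\theta)|$. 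By Lemma \ref{lemaunif1} the right-hand side tends to $0$, hence $Q_0(\hat\theta_N)\to 0$ almost surely.

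It then remains to convert $Q_0(\hat\theta_N)\to 0$ into $\hat\theta_N\to\theta_0$. Here I would argue by contradiction using compactness: if $\hat\theta_N$ did not converge to $\theta_0$, there would exist $\varepsilon>0$ and a subsequence with $\hat\theta_{N_k}\in \Theta\setminus B(\theta_0,\varepsilon)$; since $\Theta$ is compact this set is compact, so after passing to a further subsequence $\hat\theta_{N_k}\to\theta^{*}$ for some $\theta^{*}\neq\theta_0$. The continuity of $Q_0$ in $\theta$—which follows from the assumed continuous differentiability of $f_\theta$, hence of $\rho_\theta(\cdot)$ and of each $V_\ell$, in $\theta$—gives $Q_0(\hat\theta_{N_k})\to Q_0(\theta^{*})$, while Lemma \ref{unique} forces $Q_0(\theta^{*})>0$, contradicting $Q_0(\hat\theta_N)\to 0$. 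Equivalently, one may phrase this via the strictly positive minimum $\delta:=\min_{\theta\in\Theta\setminus B(\theta_0,\varepsilon)}Q_0(\theta)>0$, attained by compactness and continuity, and note that $Q_0(\hat\theta_N)<\delta$ eventually forces $\hat\theta_N\in B(\theta_0,\varepsilon)$.

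I expect the genuine analytic difficulty to lie entirely in the two lemmas already established—the uniform convergence in Lemma \ref{lemaunif1}, resting on ergodicity and the second-chaos structure of $\mathbf{g}$, and the identification in Lemma \ref{unique}, resting on Assumptions \ref{sec:ap1} and \ref{sec:ap2}. Given those, the remaining work is the routine deterministic argument above; the only points requiring care are verifying that $Q_0\ge 0$ with $Q_0(\theta_0)=0$ (from positive definiteness of $A$), the continuity of $Q_0$ in $\theta$, and the existence and measurability of the minimizer $\hat\theta_N$, all of which follow from the compactness of $\Theta$ and the continuity of $\hat Q_N$ in $\theta$.
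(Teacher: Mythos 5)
Your proposal is correct and follows essentially the same route as the paper: the paper's proof simply invokes Lemmas \ref{lemaunif1} and \ref{unique} to apply Theorem 2.1 of \cite{mcfadden}, and your argument is precisely the standard proof of that cited theorem (the sandwich bound $0\le Q_0(\hat\theta_N)\le 2\sup_\theta|\hat Q_N(\theta)-Q_0(\theta)|$ plus the compactness--continuity--identification step), spelled out rather than cited. Your added care about $Q_0\ge 0$ with unique zero at $\theta_0$, continuity of $Q_0$, and working $\omega$-wise on the almost-sure event of Lemma \ref{lemaunif1} is exactly what is needed to upgrade the usual in-probability statement to almost-sure consistency, matching the paper's claim.
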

\begin{proof}
Because of Lemma \ref{lemaunif1} and \ref{unique} we can apply Theorem 2.1 in \cite{mcfadden} and the result holds.
\end{proof}
\subsection{Asymptotic Normality}\label{sec:asymptotic-normality-section}
In this section we use the Breuer-Major theorem (see \cite{breuermajor}) to prove the asymptotic normality of $\hat \theta_N$, under some specific assumptions on $\nns \rho_\theta(\alpha)$ and $f_\theta(x)$. First denote for any $\theta \in \Theta$:
\begin{align*}
  \hat G_N(\theta)&:=\nabla_\theta \hat {\nn g}_N(\theta)\\
  G(\theta)&:=E[\nabla_\theta \nn g(\cdot,\theta)].
\end{align*}
Note that $G(\theta)$ does not depend on $t$ because of the
stationarity of $X_t$. 
Now we can state the following lemma:
\begin{lema}\label{asymnorm1}
Let $\theta \in \Theta$. The following is true:
\begin{itemize}
\item[(i)] $\hat G_N(\theta)=G(\theta)=-\nabla_\theta \mathbf{V}(\theta)$. Moreover, $G(\theta)$ is a continuous function of $\theta$.
\item[(ii)] Under Assumptions \ref{sec:ap1} and \ref{sec:ap2} the matrix $G(\theta)'DG(\theta)$ is non-singular for any positive-definite matrix $D$. 
\end{itemize}
\end{lema}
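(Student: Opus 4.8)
The plan is to handle the two parts in turn: part (i) is a direct differentiation, while part (ii) reduces to a rank statement whose only delicate point is a transversality condition between the filter matrix and the Jacobian of the autocovariances.

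For part (i), I would start from the observation that in $g_\ell(t,\theta)=\varphi_\ell(t)^2-V_\ell(\theta)$ only the deterministic term $V_\ell(\theta)$ depends on $\theta$; the squared filtered observation $\varphi_\ell(t)^2$ does not. Hence $\nabla_\theta\nn g(t,\theta)=-\nabla_\theta\mathbf V(\theta)$ is a deterministic matrix, independent of $t$. Averaging over $i=L,\dots,N$ returns the same matrix, giving $\hat G_N(\theta)=-\nabla_\theta\mathbf V(\theta)$, and since this quantity is non-random its expectation equals itself, so $G(\theta)=-\nabla_\theta\mathbf V(\theta)$ as well. For the continuity of $G$, I would differentiate the Bochner representation \eqref{bochner}: because $f_\theta$ is continuously differentiable in $\theta$, a differentiation-under-the-integral argument (justified by a routine domination bound) gives $\nabla_\theta\rho_\theta(\alpha k)=\int_{\mathbb R}\cos(\alpha k x)\,\nabla_\theta f_\theta(x)\,dx$, which is continuous in $\theta$ by dominated convergence; as $V_\ell(\theta)=\sum_{k=0}^L b_k\rho_\theta(\alpha k)$ is a finite linear combination, $\nabla_\theta\mathbf V$, and hence $G$, is continuous.

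For part (ii), the first step is to convert the non-singularity into a rank condition. Since $D$ is positive-definite, for every $v\in\mathbb R^p$ one has $v'G(\theta)'DG(\theta)v=(G(\theta)v)'D(G(\theta)v)\ge 0$, with equality precisely when $G(\theta)v=0$. Consequently $G(\theta)'DG(\theta)$ is non-singular for every positive-definite $D$ if and only if $G(\theta)$ has full column rank $p$, so it suffices to show $\ker G(\theta)=\{0\}$. Using part (i) and $V_\ell(\theta)=\mathbf{b}_\ell'\nns\rho_\theta(\alpha)$, I would write $G(\theta)=-B\,M(\theta)$, where $B$ is the matrix of Assumption \ref{sec:ap2} and $M(\theta):=\nabla_\theta\nns\rho_\theta(\alpha)$ is the $(L+1)\times p$ Jacobian, which has full column rank $p$ by Assumption \ref{sec:ap1} in the form given in the Remark, while $\mathrm{rank}(B)\ge p$ by Assumption \ref{sec:ap2}.

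The main obstacle is that full column rank of $M(\theta)$ together with $\mathrm{rank}(B)\ge p$ does not by itself yield $\mathrm{rank}(B\,M(\theta))=p$; what is genuinely required is the transversality $\mathrm{Col}\,M(\theta)\cap\ker B=\{0\}$. To obtain it I would move to the spectral side. Writing $\sum_{k=0}^L b_k\cos(\alpha k x)=\bigl|\sum_q a_{\ell,q}e^{i\alpha q x}\bigr|^2=:|\hat a_\ell(x)|^2$, the representation \eqref{bochner} gives $V_\ell(\theta)=\int_{\mathbb R}|\hat a_\ell(x)|^2 f_\theta(x)\,dx$, so $G(\theta)v=0$ is equivalent to $\int_{\mathbb R}|\hat a_\ell(x)|^2\bigl(\nabla_\theta f_\theta(x)\cdot v\bigr)\,dx=0$ for every $\ell=1,\dots,L$. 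The structural feature I would exploit is that each $\mathbf{b}_\ell$ has strictly positive leading entry $b_0=\sum_q a_{\ell,q}^2>0$, so the $|\hat a_\ell|^2$ are genuine non-negative spectral weights rather than arbitrary vectors, which rules out the degeneracies a naive rank count would permit. The plan is then to show, using this together with the injectivity of $\nns\rho_{\theta,p}(\alpha)$ from Assumption \ref{sec:ap1}, that no nonzero $v$ can make $\nabla_\theta f_\theta\cdot v$ orthogonal to all $L$ weights simultaneously, forcing $v=0$. Closing this transversality step, which ties the filter bank $B$ to the spectral identifiability, is where the real work lies; the reduction and part (i) are routine.
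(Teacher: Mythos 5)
Your part (i) coincides with the paper's proof: in $g_\ell(t,\theta)$ only the deterministic term $V_\ell(\theta)$ depends on $\theta$, so $\hat G_N(\theta)$ is the non-random matrix $-\nabla_\theta \mathbf{V}(\theta)$, equal to its own expectation; the paper obtains continuity directly from the assumed continuous differentiability of $\rho_\theta(\cdot)$, while you re-derive it by differentiating \eqref{bochner} under the integral sign, which is fine if slightly more than needed. Your reduction of (ii) to full column rank of $G(\theta)=-B\,\nabla_\theta\nns\rho_\theta(\alpha)$, via $v'G'DGv=(Gv)'D(Gv)$, is also exactly the paper's route, and you have correctly diagnosed that the paper's one-line conclusion --- full rank of the product ``due to the remark after Assumption \ref{sec:ap1} and the fact that $B$ is full-rank'' --- suppresses a transversality condition: since $B$ is $L\times(L+1)$, $\ker B\neq\{0\}$ always, and $\mathrm{rank}(B)\geq p$ together with $\mathrm{rank}(M(\theta))=p$ does not by itself force $\mathrm{rank}(BM(\theta))=p$ unless $\mathrm{Col}\,M(\theta)\cap\ker B=\{0\}$.

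The genuine gap is that your proposal does not close this step, and the spectral mechanism you sketch cannot close it as described. Writing $G(\theta)v=0$ as $\int_{\mathbb R}|\hat a_\ell(x)|^2\bigl(\nabla_\theta f_\theta(x)\cdot v\bigr)dx=0$ for $\ell=1,\ldots,L$ is not new information: it is literally the statement $\mathbf{b}_\ell'M(\theta)v=0$, i.e.\ $BM(\theta)v=0$, rewritten in integral form, so the reformulation is circular. Positivity of the weights $|\hat a_\ell|^2$ (equivalently $b_0=\sum_q a_q^2>0$) buys nothing, because $\nabla_\theta f_\theta(x)\cdot v$ is sign-changing in $x$, and a sign-changing function can perfectly well integrate to zero against finitely many non-negative weights. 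Nor does the injectivity of $\nns\rho_{\theta,p}(\alpha)$ from Assumption \ref{sec:ap1} help at this point: via the Remark it yields only that $M(\theta)v\neq 0$ for $v\neq 0$, which says nothing about whether that nonzero vector lies in $\ker B$ (a one-dimensional space when the $\mathbf{b}_\ell$ are independent). So your ``plan'' ends exactly where the real difficulty begins --- a difficulty the paper itself elides by, in effect, reading its hypotheses as already guaranteeing $\mathrm{rank}(BM(\theta))=p$. To repair the argument one must either verify the transversality directly for the filters and model at hand (e.g.\ check that $B\nabla_\theta\nns\rho_\theta(\alpha)$ has rank $3$ for the finite-difference filters used with the fOU process in Section \ref{sec:param-estim-fract}), or state $\mathrm{rank}\bigl(B\nabla_\theta\nns\rho_\theta(\alpha)\bigr)=p$ as an explicit assumption; as submitted, your proof of (ii) is incomplete.
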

\begin{proof}
See Appendix \ref{sec:appendix-proofs}.
\end{proof}

The fact that $X_t$ is a stationary process allows us to write this gaussian process as a Wiener-It\^o integral with respect to a complex centered Gaussian random measure $W$ over $[-\frac{\pi}{\alpha}, \frac{\pi}{\alpha}]$ as follows (see Section 3.2 in \cite{Lifshits2012}, Theorem 2.7.7 in \cite{Nourdin2012} and Section 2.1 in \cite{Bierme2011}),
\begin{align}\label{eq:9}
  X_{t_k}=\int_{-\frac{\pi}{\alpha}}^{\frac{\pi}{\alpha}} e^{it_kx}\bar f_{\theta_0}(x)^{1/2}dW(x)=I_1(A_k(\cdot|\theta_0))
\end{align}
where $A_k(x |\theta_0):=e^{it_kx}\bar f_{\theta_0}(x)^{1/2}$ is the
corresponding kernel under this representation for $k\in \{0,\ldots,N\}$ and
$\bar f_{\theta_0}(x)$ is the spectral density of $X_t$ (for $t_k=k\cdot \alpha$, $k\in
\mathbb Z$):
\begin{align}\label{eq:7}
  \bar{f}_{\theta_0}(u):=\sum_{p\in \mathbb
    Z}f_{\theta_0}\left(u+\frac{2\pi}{\alpha} p\right), \qquad u\in \left[-\frac{\pi}{\alpha}, \frac{\pi}{\alpha}\right].  
\end{align}
Assume that we choose $n$ and $m$ such that $t_n<t_m$. The covariance between
$X_{t_m}$ and $X_{t_n}$ can be computed as: 
\begin{align}\label{eq:10}
  \text{Cov}(X_{t_m},X_{t_n})&=\rho_{\theta_0}(|t_m-t_n|)\notag\\
&=\frac{\alpha}{2\pi}\int_{-\frac{\pi}{\alpha}}^{\frac{\pi}{\alpha}}e^{i|m-n|\alpha
  x}\bar f_{\theta_0}(x)dx\notag \\
&=\frac{1}{2\pi}\int_{-\pi}^\pi e^{i|m-n|u}\bar{f}_{\theta_0}\left(\frac u \alpha\right)du.
\end{align}
For the next lemma, we need to generalize the definition of $b_k$ in (\ref{eq:bk1}) in the following way:
  \begin{align*}    b_k^{i,j}:=\sum_{q=0}^{L-k}a_q^ia_{q+k}^j+\sum_{q=0}^{L-k}a_q^ja_{q+k}^i
  \end{align*}
where $i,j=1,\ldots,L$ and $a_k^i$ is the $k$-th entry of the filter
$\mathbf{a}_i$ (recall the notation in Assumption \ref{sec:ap2}). The main
condition in the Breuer-Major theorem is the summability of the second-power
of the autocovariance function at integer times (see \cite{breuermajor}). In
order to validate this argument we can make the following assumption, which is
equivalent to the above fact:
\begin{ap}\label{sec:ap4}
Assume that for any $i,j\in \{1,\ldots,L\}$ we have that:
  \begin{align*}
    R(u|i,j):=\min\left(1,|u|^{l_i+l_j}\right)\bar{f}_{\theta_0}(u) \in L^2[(-\pi,\pi)],
  \end{align*} 
where $l_i$ is the order of the filter $\mathbf{a}_i$. 
\end{ap}

The assumption above and the fact that the vector $\hat {\nn g}_N(\theta_0)$ can be rescaled to obtain a sum of Hermite processes of second order, are sufficient conditions to prove the asymptotic normality of the sample moment equations, as follows:
\begin{lema}\label{sec:asymptotic-normality}
Under Assumptions \ref{sec:ap1}-\ref{sec:ap4}:
  \begin{align*}
    \sqrt{N}\hat {\nn g}_N(\theta_0)\stackrel{d}{\rightarrow}\mathcal{N}(0,\Omega)
  \end{align*}
  where $\Omega$ is a $L\times L$ matrix whose components are:
  \begin{align}\label{eq:1}
    \Omega_{ij}=2\sum_{k\in \mathbb Z}\left[\sum_{p=0}^Lb_p^{i,j}\rho_{\theta_0}\left[\alpha(k+p)\right]\right]^2.
  \end{align}
\end{lema}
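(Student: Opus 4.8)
The plan is to exploit the fact, already noted after the definition of $\nn g$, that each component of $\nn g(t_i,\theta_0)$ lives in the second Wiener chaos, and then to invoke a multivariate Breuer--Major theorem whose square-summability hypothesis is exactly Assumption \ref{sec:ap4}. First I would make the chaos structure explicit. Using the spectral representation \eqref{eq:9} and the fact that $t_i-\alpha q=t_{i-q}$, the filtered variable is a first-chaos integral
\begin{align*}
\varphi_\ell(t_i)=\sum_{q=0}^L a_q^\ell X_{t_{i-q}}=I_1(h_i^\ell),\qquad h_i^\ell:=\sum_{q=0}^L a_q^\ell A_{i-q}(\cdot\,|\theta_0).
\end{align*}
The multiplication formula for Wiener--It\^o integrals (see \cite{Nourdin2012}) gives $\varphi_\ell(t_i)^2=I_2(h_i^\ell\otimes h_i^\ell)+\|h_i^\ell\|^2$, and since $\|h_i^\ell\|^2=E[\varphi_\ell(t_i)^2]=V_\ell(\theta_0)$ we obtain $g_\ell(t_i,\theta_0)=I_2(h_i^\ell\otimes h_i^\ell)$. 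Hence $\sqrt N\,\hat{\nn g}_N(\theta_0)$ is, up to the deterministic factor $\sqrt N/(N-L+1)\sim N^{-1/2}$, a normalized partial sum of a stationary second-chaos vector sequence.

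Next I would verify the Breuer--Major hypotheses. Each coordinate arises from the stationary Gaussian family $\{\varphi_\ell(t_i)\}$ through the Hermite-rank-two map $x\mapsto x^2-E[x^2]$, so only square-summability of the pertinent covariances is needed. The cross-covariance $\gamma_{ij}(m-n):=E[\varphi_i(t_m)\varphi_j(t_n)]=\langle h_m^i,h_n^j\rangle$ are the Fourier coefficients of the cross-spectral density $\overline{\hat a_i}\,\hat a_j\,\bar f_{\theta_0}$, where $\hat a_\ell(u)=\sum_q a_q^\ell e^{-iqu}$. Because $\nn a_\ell$ has order $l_\ell$, its transfer function vanishes to order $l_\ell$ at the origin, so $|\hat a_i\hat a_j|$ behaves like $|u|^{l_i+l_j}$ near $0$ and is bounded away from it; by Parseval, $\sum_k\gamma_{ij}(k)^2<\infty$ is therefore equivalent to $\min(1,|u|^{l_i+l_j})\bar f_{\theta_0}\in L^2(-\pi,\pi)$, which is precisely Assumption \ref{sec:ap4}. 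For the vector statement I would pass through the multivariate fourth-moment theorem (see \cite{Nourdin2012}), or equivalently apply the Cram\'er--Wold device to $\nns\lambda'\nn g$, which remains a second-chaos sequence, so that convergence of the covariances together with the contraction condition furnished by Breuer--Major yields joint normality.

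Finally I would identify $\Omega$. The second-chaos isometry gives $\mathrm{Cov}(g_i(t_m,\theta_0),g_j(t_n,\theta_0))=2\langle h_m^i,h_n^j\rangle^2=2\,\gamma_{ij}(m-n)^2$, whence
\begin{align*}
N\,\mathrm{Cov}\bigl(\hat{\nn g}_N(\theta_0)\bigr)_{ij}=\frac{N}{(N-L+1)^2}\sum_{m,n=L}^N 2\,\gamma_{ij}(m-n)^2\xrightarrow[N\to\infty]{}2\sum_{k\in\mathbb Z}\gamma_{ij}(k)^2,
\end{align*}
the interchange of limit and sum being justified by the summability just established (the number of pairs on each diagonal $m-n=k$ is $\sim(N-L+1)-|k|$). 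It then remains to check the algebraic identity $\gamma_{ij}(k)=\sum_{p=0}^L b_p^{i,j}\rho_{\theta_0}(\alpha(k+p))$: expanding $\gamma_{ij}(k)=\sum_{q,q'}a_q^i a_{q'}^j\rho_{\theta_0}(\alpha|k-(q-q')|)$, grouping by the lag $p=q'-q$, and using that $\rho_{\theta_0}$ is even together with the symmetrized definition of $b_p^{i,j}$ produces the claimed form after a shift of the index $k$, yielding the stated $\Omega_{ij}$.

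I expect the main obstacle to be the two bookkeeping reductions rather than the probabilistic core: namely, establishing rigorously that Assumption \ref{sec:ap4} is equivalent to the Breuer--Major square-summability condition through the order-$l_\ell$ vanishing of the transfer functions at the origin, and carrying out the reindexing that turns $\gamma_{ij}$ into the $b_p^{i,j}$ form while correctly tracking the even extension of $\rho_{\theta_0}$ and the boundary terms of the $p$-summation.
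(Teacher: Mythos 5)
Your proposal is correct and follows essentially the same route as the paper's proof: the paper likewise represents each $g_\ell(t_i,\theta_0)$ through the Hermite-rank-two map applied to the normalized filtered sequence, reduces the Breuer--Major square-summability hypothesis to Assumption \ref{sec:ap4} via the transfer polynomials $P_{\nn a_\ell}(e^{-iu})$ vanishing to order $l_\ell$ at the origin (so that $|P_{\nn a_i}P_{\nn a_j}|\lesssim\min(1,|u|^{l_i+l_j})$), and identifies $\Omega_{ij}=2\sum_k E[\varphi_i(t_{n+k})\varphi_j(t_n)]^2$ by Plancherel before the same $b_p^{i,j}$ reindexing. The only cosmetic difference is the packaging of joint normality --- you invoke the multivariate fourth-moment theorem (or Cram\'er--Wold on the second-chaos combination $\nns\lambda'\nn g$), while the paper cites the vector-valued spectral Breuer--Major theorem (Theorem 3.1 of \cite{Bierme2011}) directly --- which changes nothing in substance.
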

\begin{proof}
See Appendix \ref{sec:appendix-proofs}.
\end{proof}

The next natural step in this analysis is to study the asymptotic behavior of the Mean Squared error (MSE) of $\hat \theta_N$. Note that from equation (\ref{eq:9}) the filtered process can be represented for any $\ell \in\{1,\ldots,L\}$ and $i \in \{0,1,\ldots,N\}$ as:
\begin{align}\label{eqphiell}
  \varphi_\ell(t_i)=\sum_{q=0}^La_qX_{t_i-\alpha q}=I_1(SA_{i,\ell}(\cdot|\theta_0))
\end{align}
where $SA_{i,\ell}(u|\theta_0):=\sum_{q=0}^La_qA_{i-q}(u|\theta_0)$, $u\in \mathbb R$. Hence, using the multiplication rule of Wiener integrals (see \cite{nualartbook}), the componentwise average of the sample moment equations is:
\begin{align*}
  (\hat {\nn g}_N(\theta))_\ell&=\frac{1}{N-L+1}\sum_{j=L}^N [\varphi_\ell(t_j)^2-V_\ell(\theta)]\\
&=\frac{1}{N-L+1}\sum_{j=L}^NI_2(SA_{\ell,j}\otimes SA_{\ell,j})=I_2[B_{\ell,j}(\cdot|\theta)]
\end{align*}
where $B_{\ell,j}(\cdot|\theta):=\frac{1}{N-L+1}\sum_{j=L}^NSA_{\ell,j}\otimes SA_{\ell,j}$ and $\ell \in \{1,\ldots,L\}$. Then we can conclude that $\hat{\nn g}_N(\theta)$ is a vector of $L$ integrals belonging to the second Wiener-chaos of $W$. Because of Lemma \ref{sec:asymptotic-normality}, we already know that:
\begin{align*}
  \sqrt{N}(\hat {\nn g}_N(\theta_0))_\ell\stackrel{d}{\longrightarrow}\mathcal{N}(0,\Omega_{\ell,\ell})
\end{align*}
and therefore $E[|(\hat {\nn g}_N(\theta_0))_\ell|^2]=O(N^{-1})$. Hence there exists a constant $K_\ell>0$ such that:
\begin{align*}
  E[|(\hat {\nn g}_N(\theta_0))_\ell|^2]\leq \frac{K_\ell}{N}.
\end{align*}
so, we can use the fact that $(\hat {\nn g}_N(\theta_0))_\ell$ belongs to the
second-Wiener chaos generated by the process $W(x)$ and furthermore we can use
the equivalence of all the $L^s$-norms of $(\hat {\nn g}_N(\theta_0))_\ell$ to get: (see Theorem 3.50 in \cite{janson1997gaussian})
\begin{align*}
  E[|(\hat {\nn g}_N(\theta_0))_\ell|^{4s}]&\leq c_s[E[|(\hat {\nn g}_N(\theta_0))_\ell|^2]]^{2s}\leq \frac{c_s\cdot K_\ell}{N^{2s}}
\end{align*}
where $c_s>0$. Using Jensen's inequality:
\begin{align*}
  E[\|\hat {\nn
    g}_N(\theta_0)\|^{4s}]&=E\left[\left|\sum_{\ell=1}^L\left|(\hat {\nn
          g}_N(\theta_0))_\ell\right|^2\right|^{2s}\right]\leq
  L^{2s-1}\sum_{\ell=1}^LE\left[|(\hat {\nn g}_N(\theta_0))_\ell|^{4s}\right]\\
&\leq \frac{L^{2s}c_s}{N^{2s}}\max_{1\leq \ell\leq L}\{K_\ell\}.
\end{align*}
All these calculations allow us to study the behavior of the MSE of $\hat
\theta_N$. Let $\varepsilon_N:=\hat \theta_N-\theta_0$. First note that
because of the multivariate mean value theorem (see Proposition F.6.1 in
\cite{Lebanon2012} and more details in \cite{mcfadden}):
\begin{align*}
  \varepsilon_N:=\hat \theta_N-\theta_0&=-[\hat G_N(\hat \theta_N)'A \hat G_N(\bar \theta_N)]^{-1}\cdot \hat G_N(\hat \theta_N)'A\hat g_N(\theta_0)\\
  &=-\psi_N(\bar \theta_N,\hat \theta_N)\cdot \hat g_N(\theta_0)
\end{align*}
where $\bar \theta_N$ is a value in the interval $[\theta_0,\hat \theta_N]$, 
\begin{align}\label{eq:6}
  \psi_N(\bar \theta_N,\hat \theta_N):=[G(\hat \theta_N)'A G(\bar \theta_N)]^{-1}\cdot G(\hat \theta_N)'A
\end{align}
and $\theta \in \Theta$ (see Lemma \ref{asymnorm1}). The next lemma states
that all moments of $\psi_N$ can be uniformly bounded:
\begin{lema}\label{sec:LemacotaEPsi}
  Let $\psi_N$ as in equation (\ref{eq:6}). Under the remark after assumption \ref{sec:ap1} there exists $R_{s,L}>0$ such that:
  \begin{align*}
    E[\|\psi_N(\bar \theta_N,\hat \theta_N)\|^{4s}]<R_{s,L}.
  \end{align*}
\end{lema}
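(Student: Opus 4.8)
The plan is to reduce the statement to a bound on a \emph{deterministic} object. By Lemma~\ref{asymnorm1}(i) the sample Jacobian $\hat G_N(\theta)$ coincides identically with $G(\theta)=-\nabla_\theta\mathbf V(\theta)$, so no randomness enters $\psi_N$ through the derivatives; the only randomness is carried by the arguments $\hat\theta_N$ and $\bar\theta_N$. Writing
\[
\psi(\theta_1,\theta_2):=\bigl[G(\theta_1)'A\,G(\theta_2)\bigr]^{-1}G(\theta_1)'A,
\]
we have $\psi_N=\psi(\hat\theta_N,\bar\theta_N)$, with both arguments lying in the compact set $\Theta$. By Theorem~\ref{teoconsis1}, $\hat\theta_N\to\theta_0$ almost surely, and since $\bar\theta_N$ lies on the segment $[\theta_0,\hat\theta_N]$ the same holds for $\bar\theta_N$; thus $(\hat\theta_N,\bar\theta_N)\to(\theta_0,\theta_0)$ a.s. The two tasks are then to bound $\psi$ near this diagonal point and to promote that control into an $N$-uniform bound on the $4s$-th moment.

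The local bound I would extract from continuity and compactness. Since $G$ is continuous on $\Theta$ (Lemma~\ref{asymnorm1}(i)), the factor $G(\theta)'A$ has operator norm at most $C_1:=\|A\|\,\sup_{\theta\in\Theta}\|G(\theta)\|<\infty$. Invoking the remark after Assumption~\ref{sec:ap1}, the Jacobian $\nabla_\theta\nns{\rho}_\theta(\alpha)$ has full column rank throughout $\Theta$, so Lemma~\ref{asymnorm1}(ii) with $D=A$ makes the symmetric Gram matrix $G(\theta)'A\,G(\theta)=\bigl(A^{1/2}G(\theta)\bigr)'\bigl(A^{1/2}G(\theta)\bigr)$ positive definite for every $\theta$; by continuity on the compact $\Theta$ its least eigenvalue is bounded below by some $m>0$. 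To pass from equal to unequal arguments I would use the perturbation identity
\[
G(\hat\theta_N)'A\,G(\bar\theta_N)=G(\hat\theta_N)'A\,G(\hat\theta_N)+G(\hat\theta_N)'A\bigl(G(\bar\theta_N)-G(\hat\theta_N)\bigr),
\]
estimating the second summand by uniform continuity of $G$ and the sandwich $\|\bar\theta_N-\hat\theta_N\|\le\|\hat\theta_N-\theta_0\|\to0$. Hence for all sufficiently large $N$ the mixed matrix has least eigenvalue at least $m/2$, so $\bigl\|[G(\hat\theta_N)'A\,G(\bar\theta_N)]^{-1}\bigr\|\le 2/m$ and $\|\psi_N\|\le 2C_1/m$ almost surely, eventually.

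The step I expect to be the main obstacle is converting this eventual almost-sure bound into the uniform moment inequality $\sup_N E[\|\psi_N\|^{4s}]<R_{s,L}$. The perturbation estimate only controls $\psi_N$ on the event $\{\|\hat\theta_N-\theta_0\|\le\varepsilon\}$, where the pair stays inside the neighbourhood on which $\psi$ is finite; on its complement the matrix $G(\hat\theta_N)'A\,G(\bar\theta_N)$ may approach the locus $\{\det(G(\theta_1)'A\,G(\theta_2))=0\}$, and there $\|\psi_N\|$ is no longer bounded. Since I may not appeal to the $\sqrt N$-rate of $\hat\theta_N$ here (that is an \emph{output} of the present MSE analysis, not an input), I would instead aim for a genuinely deterministic bound on $\psi$ over all of $\Theta\times\Theta$: using the factorization $G=-B\,\nabla_\theta\nns{\rho}$ from Assumption~\ref{sec:ap2} together with the full-rank property in the remark after Assumption~\ref{sec:ap1}, I would try to show that $G(\theta_1)'A\,G(\theta_2)$ remains nonsingular for \emph{every} pair, so that $\psi$ is continuous on the compact $\Theta\times\Theta$ and thus bounded by a single constant $R$, giving the claim at once with $R_{s,L}=R^{4s}$. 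Establishing this off-diagonal nonsingularity—or, failing that, splitting $E[\|\psi_N\|^{4s}]$ over $\{\|\hat\theta_N-\theta_0\|\le\varepsilon\}$ and its complement and quantifying how fast the inverse can blow up against the probability of the estimator straying, using only the consistency of Theorem~\ref{teoconsis1}—is the crux of the argument.
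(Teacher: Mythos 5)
Your proposal correctly reduces the problem to a deterministic one (via Lemma \ref{asymnorm1}(i)) and correctly isolates the hard point, but it stops exactly where the proof has to begin: the final paragraph names two escape routes and establishes neither, so what you have is an accurate diagnosis of the difficulty rather than a proof. Worse, both routes are problematic as stated. The off-diagonal nonsingularity of $G(\theta_1)'A\,G(\theta_2)$ over all of $\Theta\times\Theta$ is false in general: take $A=I$ and $G$ of full column rank everywhere; then $G(\theta_1)'G(\theta_2)$ is singular as soon as some nonzero vector of the column space of $G(\theta_2)$ is orthogonal to the column space of $G(\theta_1)$, and nothing in Assumptions \ref{sec:ap1}--\ref{sec:ap2} or the factorization $G=-B\nabla_\theta\nns{\rho}_\theta(\alpha)$ excludes this for distant $\theta_1,\theta_2$. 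The splitting route fares no better with the tools you allow yourself: on the complement of $\{\|\hat\theta_N-\theta_0\|\le\varepsilon\}$ you have no a priori bound on $\|[G(\hat\theta_N)'A\,G(\bar\theta_N)]^{-1}\|$ at all, and if the pair can approach the singular locus the essential supremum there is infinite, so no tail probability obtained from mere consistency (which carries no rate) can compensate it. The eventual almost-sure bound $\|\psi_N\|\le 2C_1/m$ you derive is genuinely weaker than the claimed $N$-uniform moment bound, and the gap between them is never bridged.

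For comparison, the paper's own proof is the purely deterministic compactness bound you were aiming for, but applied to the \emph{same-argument} Gram matrix: it factors $\|\psi_N\|\le\|F_N^{-1}\|\cdot\|G\|_{op}\cdot\|A\|$ with $F_N:=G(\theta_N)'A\,G(\theta_N)$, bounds $\|G\|_{op}$ by continuity on the compact $\Theta$ (Lemma \ref{asymnorm1} plus the remark after Assumption \ref{sec:ap1}), and bounds $\lambda_{\min}(F_N)$ below through the quadratic form $x'G'AGx\ge\lambda_{\min}(A)\|Gx\|^2$ — the paper's step $\inf_{\|u\|\ge 1}u'Au$ implicitly normalizes $\|G(\theta)x\|\ge 1$, the clean version being $\lambda_{\min}(F_N)\ge\sigma_{\min}(G(\theta))^2\lambda_{\min}(A)$ with $\inf_{\theta\in\Theta}\sigma_{\min}(G(\theta))>0$ by full rank, continuity and compactness. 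That bound holds surely, for every $N$ and every sample point, so the $4s$-th moment bound is immediate with no appeal to consistency, no perturbation identity, and no splitting. You should note, however, that the paper writes both $G$-factors at a single point $\theta_N$, whereas $\psi_N$ in equation (\ref{eq:6}) carries the two distinct random arguments $\hat\theta_N$ and $\bar\theta_N$: the mixed-argument issue you isolated is glossed over in the paper's proof as well, and closing it rigorously requires either recasting the mean-value expansion so that only same-argument Gram matrices are inverted, or combining your perturbation estimate with an argument confining $(\hat\theta_N,\bar\theta_N)$ near the diagonal — precisely the step that neither your proposal nor the paper supplies in full.
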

\begin{proof}
See Appendix \ref{sec:appendix-proofs}.
\end{proof}

Note also that because of the continuity of $G$:
\begin{align*}
  \psi_N(\bar \theta_N,\hat \theta_N)\stackrel{a.s.}{\rightarrow}[G(\theta_0)'AG(\theta_0)]^{-1}G(\theta_0)'A.
\end{align*}
and due to Lemma \ref{sec:LemacotaEPsi}, for any $s>0$:
\begin{align*}
  E[\|\varepsilon_N\|^{2s}]&=E\left[\|\psi_N(\bar \theta_N,\hat \theta_N)\cdot \hat {\nn g}_N(\theta_0)\|^{2s}\right]=E\left[\|\psi_N(\bar \theta_N,\hat \theta_N)\|^{2s}\cdot \|\hat {\nn g}_N(\theta_0)\|^{2s}\right]\\
  &\leq E\left[\|\psi_N(\bar \theta_N,\hat \theta_N)\|^{4s} \right]^{1/2}E\left[\|\hat {\nn g}_N(\theta_0)\|^{4s}\right]^{1/2}<\frac{\mathcal R_{s,L}}{N^s} 
\end{align*}
where $\mathcal R_{s,L}=(L\cdot R_{s,L}\cdot c_s\cdot \max_{\ell}\{K_\ell\})^{1/2}$, and taking $s=1$ we obtain an upper bound for the MSE of $\theta_N$. Furthermore, if we choose $s>1$, we can apply Borel-Cantelli lemma to get:
\begin{align*}
  \varepsilon_N\stackrel{\text{a.s}}{\longrightarrow}0
\end{align*}
If we take $\gamma>0$, and under the same reasoning as before, we can use Chebyshev's inequality to get:
\begin{align*}
  P[\|\varepsilon_N\|>N^{-\gamma}]\leq \frac{E[\|\varepsilon_N\|^{2s}]}{N^{-2\gamma s}}<\frac{\mathcal R_{s,L}}{N^{(1-2\gamma)s}}
\end{align*}
and hence for any $\gamma<\frac{1}{2}$ there exists $s$ such that $s>\frac{1}{1-2\gamma}$ and, by Borel-Cantelli lemma there exists $N_0(\omega)<\infty$ such that for all $N>N_0(\omega)$:
\begin{align*}
  N^{\gamma}\|\varepsilon_N\|<c \qquad \text{a.s.}
\end{align*}
 where $c$ is an arbitrary positive constant. Hence, we conclude the following theorem:
 \begin{teo}\label{sec:MSEteoGMM}
   Under Assumptions \ref{sec:ap1} and \ref{sec:ap2} we have:
   \begin{itemize}
   \item[(i)] $E[\|\hat \theta_N-\theta_0\|^2]=O(N^{-1})$
   \item[(ii)] $N^\gamma\|\hat \theta_N-\theta_0\|\stackrel{\text{a.s.}}{\longrightarrow}0$ for any $\gamma<\frac 1 2$. 
   \end{itemize}
 \end{teo}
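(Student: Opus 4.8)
The plan is to control the $L^{2s}$-norms of the estimation error $\varepsilon_N = \hat\theta_N - \theta_0$ for every integer $s \ge 1$ and then read off both conclusions from these moment bounds. The starting point is the linearization already derived above: using the first-order condition $\hat G_N(\hat\theta_N)'A\,\hat{\nn g}_N(\hat\theta_N) = 0$ (valid because $\hat\theta_N$ is an interior minimizer of $\hat Q_N$, since $\theta_0 \in \text{Interior}(\Theta)$ and $\hat\theta_N \to \theta_0$ by Theorem \ref{teoconsis1}) together with a componentwise mean value expansion of $\hat{\nn g}_N(\cdot)$ around $\theta_0$, one writes $\varepsilon_N = -\psi_N(\bar\theta_N,\hat\theta_N)\,\hat{\nn g}_N(\theta_0)$ with $\psi_N$ as in \eqref{eq:6}. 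Here I rely on Lemma \ref{asymnorm1}: part (i) identifies $\hat G_N = G = -\nabla_\theta \mathbf V$, so the Jacobian is deterministic and continuous, and part (ii) guarantees that $G(\theta)'AG(\theta)$ is nonsingular, so the inverse defining $\psi_N$ exists once $\bar\theta_N,\hat\theta_N$ are close enough to $\theta_0$, which consistency provides.

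With this factorization in hand, the two factors are controlled separately. For $\hat{\nn g}_N(\theta_0)$, Lemma \ref{sec:asymptotic-normality} gives $\sqrt N\,\hat{\nn g}_N(\theta_0) \to \mathcal N(0,\Omega)$, whence $E[|(\hat{\nn g}_N(\theta_0))_\ell|^2] = O(N^{-1})$ for each $\ell$. Since every component lives in the second Wiener chaos of $W$, as exhibited through the representation $(\hat{\nn g}_N(\theta))_\ell = I_2[B_{\ell,j}(\cdot|\theta)]$, the equivalence of all $L^p$-norms on a fixed chaos (Theorem 3.50 in \cite{janson1997gaussian}) upgrades this to $E[|(\hat{\nn g}_N(\theta_0))_\ell|^{4s}] \le c_s K_\ell N^{-2s}$, and Jensen's inequality then yields $E[\|\hat{\nn g}_N(\theta_0)\|^{4s}] = O(N^{-2s})$. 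For the other factor, Lemma \ref{sec:LemacotaEPsi} supplies the uniform bound $E[\|\psi_N(\bar\theta_N,\hat\theta_N)\|^{4s}] < R_{s,L}$. A single application of Cauchy--Schwarz then gives $E[\|\varepsilon_N\|^{2s}] \le (E\|\psi_N\|^{4s})^{1/2}(E\|\hat{\nn g}_N(\theta_0)\|^{4s})^{1/2} = O(N^{-s})$.

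From this uniform family of bounds $E[\|\varepsilon_N\|^{2s}] \le \mathcal R_{s,L}\,N^{-s}$, both assertions follow quickly. Taking $s=1$ gives $E[\|\hat\theta_N-\theta_0\|^2] = O(N^{-1})$, which is (i). For (ii), fix $\gamma < \tfrac12$ and apply Chebyshev's inequality: $P(\|\varepsilon_N\| > N^{-\gamma}) \le N^{2\gamma s}E[\|\varepsilon_N\|^{2s}] \le \mathcal R_{s,L}\,N^{-(1-2\gamma)s}$. Since $\gamma < \tfrac12$, I may choose $s > 1/(1-2\gamma)$, which makes the right-hand side summable in $N$; the Borel--Cantelli lemma then yields $N^{\gamma}\|\varepsilon_N\| \to 0$ almost surely.

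The genuinely delicate points are not in this assembly but lie upstream and are exactly the content of the lemmas I invoke. Within the present argument, the step requiring the most care is the linearization: the vector-valued mean value theorem does not hold with a single intermediate point, so one applies it row by row (producing possibly distinct $\bar\theta_N$ per component) and must then verify that the resulting inverse in $\psi_N$ remains well-defined and stable, which is where consistency (Theorem \ref{teoconsis1}), the interiority of $\theta_0$, and the nonsingularity in Lemma \ref{asymnorm1}(ii) are jointly needed. The real workhorse is the moment estimate of Lemma \ref{sec:LemacotaEPsi}, controlling the inverse of a random Jacobian-type matrix uniformly in $N$; here I simply quote it. I also note that the hypercontractivity constants $c_s$ are fixed once $s$ is fixed, so they cause no degradation as $N \to \infty$.
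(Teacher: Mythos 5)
Your proposal is correct and follows essentially the same route as the paper's own argument: the mean-value linearization $\varepsilon_N=-\psi_N(\bar\theta_N,\hat\theta_N)\,\hat{\nn g}_N(\theta_0)$, hypercontractivity on the second Wiener chaos (Theorem 3.50 of \cite{janson1997gaussian}) plus Jensen's inequality to bound $E[\|\hat{\nn g}_N(\theta_0)\|^{4s}]=O(N^{-2s})$, the uniform moment bound on $\psi_N$ from Lemma \ref{sec:LemacotaEPsi}, Cauchy--Schwarz to combine them, and then $s=1$ for (i) and Chebyshev with $s>1/(1-2\gamma)$ followed by Borel--Cantelli for (ii). Your explicit caveat that the vector-valued mean value theorem must be applied componentwise (with possibly distinct intermediate points) is a genuine refinement of a step the paper handles only by citation, and your use of Cauchy--Schwarz repairs the paper's typographical equality $\|\psi_N\cdot\hat{\nn g}_N(\theta_0)\|^{2s}=\|\psi_N\|^{2s}\|\hat{\nn g}_N(\theta_0)\|^{2s}$, which should be an inequality by submultiplicativity; neither point changes the substance of the argument.
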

Now we can state a CLT for $\hat \theta_N$, under Assumptions \ref{sec:ap1}-\ref{sec:ap4}:
\begin{teo}[Asymptotic Normality]\label{sec:asympnorm}
Let $X_t$ be a Gaussian stationary process with parameter $\theta_0$. If $\hat \theta_N$ is the GMM estimator given in (\ref{GMMest}), then under Assumptions \ref{sec:ap1}-\ref{sec:ap4}, it holds that:
\begin{align*}
  \sqrt{N}(\hat \theta_N-\theta_0)\stackrel{d}{\rightarrow}\mathcal{N}(0,\mathcal C(\theta_0)\Omega \mathcal C(\theta_0)')
\end{align*}
where $\mathcal C(\theta_0)=\left[G(\theta_0)'AG(\theta_0)\right]^{-1}G(\theta_0)'A$ and $\Omega$ is defined as in Lemma \ref{sec:asymptotic-normality}.
\end{teo}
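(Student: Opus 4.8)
The plan is to combine the first-order optimality condition for $\hat\theta_N$ with the central limit theorem already obtained for the sample moment vector, and to pass to the limit through a Slutsky argument. The starting point is that, by Theorem \ref{teoconsis1}, $\hat\theta_N \to \theta_0$ almost surely, and since $\theta_0 \in \text{Interior}(\Theta)$ the estimator eventually lies in the interior of $\Theta$; on that event the smooth objective $\hat Q_N$ is minimized at an interior critical point, so the first-order condition $\hat G_N(\hat\theta_N)'A\,\hat{\nn g}_N(\hat\theta_N)=0$ holds for all large $N$. Here Lemma \ref{asymnorm1}(i) plays a decisive simplifying role: because $\hat G_N(\theta)=G(\theta)=-\nabla_\theta \mathbf{V}(\theta)$ is deterministic and exactly equal to the population Jacobian for every $\theta$, no uniform law of large numbers for the gradient is needed, and the whole analysis reduces to the fixed, continuous map $G$.

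Next I would linearize. Applying the multivariate mean value theorem to each component of $\hat{\nn g}_N(\cdot)$ between $\theta_0$ and $\hat\theta_N$ yields intermediate points on the segment $[\theta_0,\hat\theta_N]$ and the expansion $\hat{\nn g}_N(\hat\theta_N)=\hat{\nn g}_N(\theta_0)+\hat G_N(\bar\theta_N)(\hat\theta_N-\theta_0)$. Substituting this into the first-order condition and solving for $\varepsilon_N=\hat\theta_N-\theta_0$ produces exactly the representation already recorded before the statement, namely $\varepsilon_N=-\psi_N(\bar\theta_N,\hat\theta_N)\,\hat{\nn g}_N(\theta_0)$ with $\psi_N$ as in (\ref{eq:6}); Lemma \ref{asymnorm1}(ii) guarantees that $G(\hat\theta_N)'A\,G(\bar\theta_N)$ is invertible for all large $N$, so $\psi_N$ is well defined along the sequence. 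Multiplying by $\sqrt{N}$ gives the key identity $\sqrt{N}\,\varepsilon_N=-\psi_N(\bar\theta_N,\hat\theta_N)\cdot\sqrt{N}\,\hat{\nn g}_N(\theta_0)$.

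It then remains to pass to the limit in this product. Since $\hat\theta_N\to\theta_0$ almost surely and each intermediate point is trapped on $[\theta_0,\hat\theta_N]$, all intermediate points converge to $\theta_0$; the continuity of $G$ from Lemma \ref{asymnorm1}(i), together with continuity of matrix inversion at the invertible limit $G(\theta_0)'A\,G(\theta_0)$, gives $\psi_N(\bar\theta_N,\hat\theta_N)\to\mathcal{C}(\theta_0)=[G(\theta_0)'A\,G(\theta_0)]^{-1}G(\theta_0)'A$ almost surely, exactly as noted in the text. Meanwhile Lemma \ref{sec:asymptotic-normality} supplies $\sqrt{N}\,\hat{\nn g}_N(\theta_0)\stackrel{d}{\rightarrow}\mathcal{N}(0,\Omega)$. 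Combining the almost-sure convergence of the matrix factor with the distributional convergence of the vector factor via Slutsky's theorem yields $\sqrt{N}\,\varepsilon_N\stackrel{d}{\rightarrow}-\mathcal{C}(\theta_0)\,Z$ with $Z\sim\mathcal{N}(0,\Omega)$, and a linear image of a centered Gaussian is centered Gaussian with covariance $\mathcal{C}(\theta_0)\,\Omega\,\mathcal{C}(\theta_0)'$ (the sign being immaterial by symmetry), which is the claim.

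The point requiring the most care, and the only genuine subtlety in the argument, is the treatment of the intermediate points in the mean value expansion: the multivariate mean value theorem does not produce a single $\bar\theta_N$ valid for all $L$ components at once, so $\hat G_N(\bar\theta_N)$ must be read as the matrix whose $\ell$-th row is evaluated at a possibly distinct point $\bar\theta_N^{(\ell)}$. Fortunately the deterministic identity $\hat G_N=G$ and the continuity from Lemma \ref{asymnorm1} render this harmless in the limit, since each row converges to the corresponding row of $G(\theta_0)$ no matter which intermediate point is used; the only real prerequisite is that the inverted matrix remain non-singular along the sequence, which is precisely what Lemma \ref{asymnorm1}(ii) provides. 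A secondary item to state explicitly is the justification that $\hat\theta_N$ is eventually interior so that the first-order condition is available, which follows at once from consistency together with $\theta_0\in\text{Interior}(\Theta)$.
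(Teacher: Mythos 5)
Your proof is correct and is in substance the same as the paper's: the paper simply cites Theorem 3.4 of \cite{mcfadden}, whose proof is exactly the first-order-condition, mean-value-expansion, and Slutsky argument you spell out, and the paper itself already records the linearization $\varepsilon_N=-\psi_N(\bar\theta_N,\hat\theta_N)\hat{\nn g}_N(\theta_0)$ and the almost-sure convergence $\psi_N\rightarrow\mathcal{C}(\theta_0)$ just before Theorem \ref{sec:MSEteoGMM}. Your explicit handling of the row-wise intermediate points $\bar\theta_N^{(\ell)}$ is a careful touch that the paper glosses over, but it does not change the route.
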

\begin{proof}
  Lemmas \ref{asymnorm1} and \ref{sec:asymptotic-normality} gives sufficient conditions to apply Theorem 3.4 in \cite{mcfadden}, and conclude the asymptotic normality of $\hat \theta_N$.
\end{proof}
\subsection{Practical Considerations}\label{sec:efficiency}
If the number of moment conditions $L$ is greater than the number of parameters $p$, the asymptotic variance of $\sqrt{N}(\hat \theta_N-\theta_0)$ is minimized when $A=\Omega^{-1}$ (see Theorem 3.4 of \cite{hallgmm} and \cite{mcfadden} for a detailed proof of this fact). In the case of a number of moment equations $L$ equal to the number of parameters $p$, the estimation problem is equivalent to the Method of Moments and hence $A=I$.

There are some numerical methods to compute a sequence of positive-definite
matrices such that $\hat A_N\stackrel{d}{\longrightarrow}A$ in such a way that
$\hat A_N$ depends on the data available. This type of procedure gives
numerical advantages, specifically when $A=\Omega^{-1}$, since the previous matrix
depends on the unknown parameter $\theta$, while the sequence $\hat A_N$ do
not depend on this parameter. Three methods that are commonly used to achieve the above are:
\begin{enumerate}
\item Hansen's Two-Step estimator (see \cite{hansen82}),
\item Iterative estimator and 
\item Continuous-Updating estimator (CUE) of Hansen, Heaton and Yaron (see \cite{hansen1996finite}).
\end{enumerate}
For ease in our calculations in sections \ref{sec:param-estim-fract} and \ref{sec:numer-perf-gmm}, we decided to use a constant matrix $A=\Omega^{-1}$ under
the case where $L>p$ and $A=I$ otherwise.   

\section{Parameter Estimation of the fractional Ornstein-Uhlenbeck
  process}\label{sec:param-estim-fract}

The main objective of this section is to apply the GMM method in the parameter estimation of a particular gaussian stationary process: the fractional Ornstein-Uhlenbeck process (fOU), under an increasing domain approach. First we will give a brief introduction of the fOU process.

\subsection{Introduction}\label{sec:introductionfOU}
Cheridito \cite{cheridito} defines the fractional Ornstein-Uhlenbeck process with initial condition $\xi\in L^0(\Omega)$, as the unique almost surely solution of the Langevin equation:
\begin{align*}
  X_t=\xi-\lambda \int_0^tX_sds+\sigma B_t^H,\qquad t\geq 0,
\end{align*}
where $\lambda, \sigma>0$, $H\in (0,1]$ and we use the notation $\theta=(H,\lambda,\sigma)$. The stationary solution of the above equation is written as:
\begin{align}\label{eq:4}
  X_t=\sigma\int_{-\infty}^te^{-\lambda(t-u)}dB_u^H,\qquad t\in \mathbb R,
\end{align}
where the integral is understood in the sense of Riemann-Stieltjes. Since this process is centered-gaussian and stationary, we can describe its distributional properties by using only its autocovariance function:  
\begin{align}\label{covcompleto}
\rho_\theta(t)=2\sigma^2c_H\int_0^{\infty}\cos(tx)\frac{x^{1-2H}}{\lambda^2+x^2}dx
\end{align}
where $H \in (0,1)$ and $c_H=\frac{\Gamma(2H+1)\sin(\pi H)}{2\pi}$. Note that
when $H\geq \frac 1 2$ we can obtain an alternative expression for
$\rho_\theta(t)$ (see Lemma \ref{lemacovfOU} and formula \eqref{covmedio} in
the Appendix \ref{sec:appendix-proofs}). Note that using equation \eqref{covcompleto}, we can write an explicit expression of the spectral density of $X_t$ in continuous time:
\begin{align*}
  f_\theta(x)=\sigma^2c_H\frac{x^{1-2H}}{x^2+\lambda^2}.
\end{align*}

Throughout this section we will assume that there exists a closed rectangle $\Theta\subset \mathbb R^2$ such that $\theta\in \Theta$.

\subsection{Consistency}\label{sec:fOUconsistency}
For ease of computation we will study the joint estimation of the three
parameters in $\theta$ when $H\geq \frac 1 2$, and we will employ the
autocovariance function obtained in formula \eqref{covmedio} only. Note that
$f_\theta(x)$ is continuous with respect to $x$, $H$, $\lambda$ and $\sigma$ and its partial derivatives are also continuous functions of its parameters.

Assumption \ref{sec:ap1} is verified locally because of
Lemma \ref{sec:lemainjectivity} in Appendix \ref{sec:appendix-proofs}. Note that the sufficient conditions in order to the
above assumption to hold are (1) $\alpha<1$, (2) $\sigma$ is of known sign and
(3) $0<\lambda<\exp(\Psi(3))$. The condition (2) does not change the magnitude of the variance of the process $X_t$
nor its scale, since the process is centered. Finally, the joint GMM estimator defined in (\ref{GMMest}) is consistent under the previous limitations. 

The ergodicity of $X_t$ is deduced from the expansion of the function $\rho_\theta(x)$ when $H\neq \frac 1 2$ (see \cite{cheridito}):
\begin{align}\label{expancorr}
  \rho_\theta(x)=\frac{H(2H-1)}{\lambda}x^{2H-2}+O(x^{2H-4}),\qquad \text{as $x\rightarrow \infty$}.
\end{align}
 
\subsection{Asymptotic Normality}\label{sec:asymptotic-normality-1}
Assume that we have a collection of at least 3 filters satisfying Assumption \ref{sec:ap2}. Hence, we can study under which cases Assumption \ref{sec:ap4} is satisfied:
\begin{lema}\label{sec:asymptotic-normality-2}
  Let $X_t$ be the stationary fOU process with parameters $\theta=(H,\lambda,\sigma)$. The Assumption \ref{sec:ap4} holds under the following two cases:
  \begin{description}
  \item[Case 1] If $l+l'\geq 1$ then it holds for all $H \in (0,1)$,
  \item[Case 2] If $l+l'=0$ then it holds if $H \in (0,\frac 3 4)$,
  \end{description}
where $L\geq 3$.
  \begin{proof}
    See Appendix \ref{sec:appendix-proofs}.
  \end{proof}
\end{lema}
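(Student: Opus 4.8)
The plan is to verify directly that $R(u|i,j)\in L^2((-\pi,\pi))$ by localizing the analysis at the only point where square-integrability can fail, namely $u=0$. Away from the origin the folded density $\bar f_{\theta_0}$ is continuous and bounded: the series in \eqref{eq:7} converges because the continuous-time density satisfies $f_{\theta_0}(x)=\sigma^2 c_H|x|^{-1-2H}(1+o(1))$ as $|x|\to\infty$ with $-1-2H<-1$, so the tail $\sum_{p\neq 0}f_{\theta_0}(\cdot+2\pi p/\alpha)$ converges locally uniformly to a continuous function. Since in addition the cut-off factor satisfies $\min(1,|u|^{l_i+l_j})\le 1$, the integrand $R(u|i,j)$ is bounded on every set bounded away from $u=0$, and the whole question reduces to the integrability of $|R(u|i,j)|^2$ in a neighborhood of the origin.

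Next I would determine the exact order of the singularity of $\bar f_{\theta_0}$ at $u=0$. Splitting off the $p=0$ term, $\bar f_{\theta_0}(u)=f_{\theta_0}(u)+\sum_{p\neq 0}f_{\theta_0}(u+2\pi p/\alpha)$, the remainder is smooth and bounded near $0$ by the argument above, while the $p=0$ term carries the singularity coming from the factor $|x|^{1-2H}$ in $f_{\theta_0}$. Thus there is a constant $C>0$ with $\bar f_{\theta_0}(u)\sim C|u|^{1-2H}$ as $u\to 0$ when $H>\tfrac12$, whereas $\bar f_{\theta_0}$ stays bounded near $0$ when $H\le\tfrac12$ (then $1-2H\ge 0$ and there is no blow-up). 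Since $\pi>1$, for $|u|$ small we have $\min(1,|u|^{l_i+l_j})=|u|^{l_i+l_j}$, so in the regime $H>\tfrac12$ we get near the origin $|R(u|i,j)|^2\asymp |u|^{2(l_i+l_j+1-2H)}$.

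The conclusion then follows from the one-dimensional criterion $\int_0^\delta |u|^\beta\,du<\infty \iff \beta>-1$. Applied to $\beta=2(l_i+l_j+1-2H)$, square-integrability near $0$ holds precisely when
\[
 l_i+l_j+1-2H>-\tfrac12,\qquad\text{i.e.}\qquad l_i+l_j> 2H-\tfrac32 .
\]
If $l_i+l_j\ge 1$ (the statement's $l+l'\ge 1$) this inequality holds for every $H\in(0,1)$, since $2H-\tfrac32<\tfrac12\le l_i+l_j$; combined with the trivial boundedness when $H\le\tfrac12$, this yields Case 1 for all $H\in(0,1)$. If $l_i+l_j=0$ the inequality reads $H<\tfrac34$, and at $H=\tfrac34$ the exponent equals $-1$ so the integral diverges; together with the $H\le\tfrac12$ case this gives Case 2, valid exactly for $H\in(0,\tfrac34)$.

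The main obstacle I anticipate is making the local expansion $\bar f_{\theta_0}(u)\sim C|u|^{1-2H}$ fully rigorous: one must justify the locally uniform convergence of the aliasing series $\sum_{p\neq 0}$ in a neighborhood of $u=0$ and its continuity there, and keep track of the harmless scaling between the variable $u$ on $(-\pi,\pi)$ and the argument $u/\alpha$ of $\bar f_{\theta_0}$ arising from \eqref{eq:10}, which affects only the constant $C$ and not the exponent. The bookkeeping separating the regimes $H\le\tfrac12$ and $H>\tfrac12$, and the endpoint $H=\tfrac34$, should be carried out explicitly so as to match the strict and non-strict boundaries in the statement.
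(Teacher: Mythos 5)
Your proposal is correct and follows essentially the same route as the paper's proof: both isolate the $p=0$ aliasing term, which carries the $|u|^{1-2H}$ singularity at the origin, bound the $p\neq 0$ tail of the folded spectral density uniformly, and reduce everything to the power-integrability criterion $2(l+l')+2-4H>-1$, i.e.\ $2(l+l')>4H-3$, yielding Case 1 for all $H\in(0,1)$ and Case 2 for $H<\tfrac{3}{4}$. The only differences are cosmetic: the paper bounds the $p=0$ contribution directly by $|u|^{2-4H}/(\lambda\alpha)^4$ rather than stating the asymptotic $\bar f_{\theta_0}(u)\sim C|u|^{1-2H}$, and your divergence claim at the endpoint $H=\tfrac{3}{4}$ is true but not needed for the lemma's sufficiency statement.
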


Since the case 1 in the above lemma guarantees the asymptotic normality of the
moment equations for any $H\in (0,1)$, we decided to use filters with
order greater or equal to 1. If this is the case we can use the previous lemma
together with Lemma \ref{sec:asymptotic-normality} to obtain the asymptotics
of $\hat {\nn g}_N(\theta)$ as follows:
\begin{align}\label{eq:ghatstats}
  \sqrt{N}\hat {\nn g}_N(\theta)\stackrel{d}{\longrightarrow}N(0,\Omega(\theta))
\end{align}
where the covariance matrix $\Omega(\theta)$ has entries according to equation (\ref{eq:1}).  

The behavior of 
\begin{align*}
  (\hat {\nn g}_N(\theta))_0:=\frac{1}{N-L+1}\sum_{i=1}^N\varphi_0(t_i)^2-V_0(\theta)
\end{align*}
deserves interest by itself due to its asymptotic behavior when $H\geq \frac 3
4$ and whose proofs rely heavily on Malliavin calculus techniques. We summarized these results in Appendix \ref{sec:behavior-g_0-when}.

We finish this section by applying the general results of the previous section to obtain consistency and asymptotic normality for the fOU's GMM estimator. Assumption \ref{sec:ap1}  on injectivity of the estimator is satsified, as shown in Lemma \ref{sec:lemainjectivity} in Appendix \ref{sec:appendix-proofs}; this lemma establishes this assumption for the fOU observations under an upper bound on $\lambda$ and for $\alpha$ sufficiently small, where these conditions do not depend on $N$. First, we can use the general result in Theorem \ref{sec:MSEteoGMM} to conclude the following rates of convergence of the GMM estimator:
\begin{prop}\label{sec:fract-ornst-uhlenb-boundsMSE}
  Let $X_t$ be a fOU process with parameters
  $\theta=(H,\lambda,\sigma)$. Then, for any positive-definite matrix A and
  under the conditions of Lemma \ref{sec:lemainjectivity}, the GMM estimator $\hat \theta_N$ satisfies:
  \begin{itemize}
  \item[(i)]
    \begin{align*}
      E[\|\hat \theta_N-\theta\|^2]=O(N^{-1})
    \end{align*}
  \item[(ii)] As $N$ goes to infinity:
    \begin{align*}
      N^{\gamma}\|\hat \theta_N-\theta\|\stackrel{\text{a.s.}}{\longrightarrow}0
    \end{align*}
    for all $\gamma<\frac 1 2$.
  \end{itemize}
\end{prop}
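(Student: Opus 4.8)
The plan is to recognize this proposition as the fOU specialization of the general rate-of-convergence result, Theorem \ref{sec:MSEteoGMM}, and therefore to reduce the proof to checking that the stationary fOU process, together with the chosen family of filters, falls within the scope of that theorem. Concretely, I would first observe that the process $X_t$ defined in \eqref{eq:4} is a centered Gaussian stationary process whose spectral density $f_\theta$ is continuous in $x$ and continuously differentiable in $\theta=(H,\lambda,\sigma)$, so the entire Section \ref{sec:Jointgeneral} framework applies verbatim: in particular the Wiener--It\^o representation \eqref{eq:9} and the resulting second-chaos structure of $\hat{\nn g}_N$. The remaining work is then purely a matter of verifying the hypotheses of Theorem \ref{sec:MSEteoGMM}, namely Assumptions \ref{sec:ap1} and \ref{sec:ap2}.

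For Assumption \ref{sec:ap1} I would invoke Lemma \ref{sec:lemainjectivity}: under the stated conditions ($\alpha<1$, $\sigma$ of known sign, $0<\lambda<\exp(\Psi(3))$) the map $\nns\rho_{\theta,p}(\alpha)$ is injective and differentiable, with non-singular gradient matrix, which is exactly what the remark following Assumption \ref{sec:ap1} requires and what Lemma \ref{sec:LemacotaEPsi} consumes. Assumption \ref{sec:ap2} is secured by the standing hypothesis that at least three filters with the prescribed rank condition on $B$ are available. Once both assumptions hold, the almost-sure consistency $\hat\theta_N\to\theta$ follows from Theorem \ref{teoconsis1}; I need this so that $\psi_N(\bar\theta_N,\hat\theta_N)$ converges almost surely to its deterministic limit along the mean-value expansion $\varepsilon_N=-\psi_N\,\hat{\nn g}_N(\theta)$.

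The one point where I would slow down is the input $E[\|\hat{\nn g}_N(\theta)\|^2]=O(N^{-1})$ that drives the whole moment estimate. Since $\hat{\nn g}_N(\theta)$ lives in the second Wiener chaos, its variance is a normalized double sum of squared covariances of the filtered process, so obtaining the $O(N^{-1})$ rate requires the squared autocovariances of each filtered process to be summable, which is precisely Assumption \ref{sec:ap4}. I would check this holds for the fOU across the whole range $H\in(0,1)$ by restricting to filters of order at least one and citing Case 1 of Lemma \ref{sec:asymptotic-normality-2}. With the rate in hand, the $L^s$-norm equivalence on a fixed chaos (Theorem 3.50 in \cite{janson1997gaussian}) upgrades it to $E[\|\hat{\nn g}_N(\theta)\|^{4s}]=O(N^{-2s})$, and combining this via Cauchy--Schwarz with the uniform moment bound $E[\|\psi_N\|^{4s}]<R_{s,L}$ from Lemma \ref{sec:LemacotaEPsi} yields $E[\|\varepsilon_N\|^{2s}]\le\mathcal R_{s,L}N^{-s}$ for every $s>0$. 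Part (i) is the case $s=1$; part (ii) follows by choosing, for each $\gamma<\tfrac12$, an exponent $s>1/(1-2\gamma)$, applying Chebyshev's inequality to $P[\|\varepsilon_N\|>N^{-\gamma}]$, and concluding by Borel--Cantelli, exactly as in the argument preceding Theorem \ref{sec:MSEteoGMM}.

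I expect the genuinely delicate content to be hidden in the assumption-checking rather than in the probabilistic machinery. The injectivity of $\nns\rho_{\theta,p}(\alpha)$ for fOU (Lemma \ref{sec:lemainjectivity}) is what forces the upper bound on $\lambda$ and the smallness of $\alpha$, while the summability needed for the $O(N^{-1})$ rate is exactly what fails in the unfiltered, long-memory regime $H>\tfrac34$; using filters of order $\ge1$ is what rescues the rate uniformly in $H\in(0,1)$. Provided these two structural facts are granted, the proposition is a direct transcription of Theorem \ref{sec:MSEteoGMM}, requiring no new estimates.
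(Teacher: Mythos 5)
Your proposal is correct and takes essentially the same route as the paper, which gives this proposition no separate proof: it is presented as an immediate specialization of Theorem \ref{sec:MSEteoGMM}, with Assumption \ref{sec:ap1} supplied (locally) by Lemma \ref{sec:lemainjectivity} under the conditions $\alpha$ small, $\sigma$ of known sign and $\lambda<\exp(\Psi(3))$, and Assumption \ref{sec:ap2} by the standing choice of at least three filters, exactly as you argue. Your extra step of invoking Case 1 of Lemma \ref{sec:asymptotic-normality-2} (filters of order at least one) to secure the summability behind $E[\|\hat{\nn g}_N(\theta)\|^2]=O(N^{-1})$ for all $H\in(0,1)$ is in fact a welcome tightening, since Theorem \ref{sec:MSEteoGMM} is nominally stated under Assumptions \ref{sec:ap1} and \ref{sec:ap2} alone, while its derivation quietly draws the variance rate from Lemma \ref{sec:asymptotic-normality}, which rests on Assumption \ref{sec:ap4}.
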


Now we are ready to state the asymptotic distribution of our joint estimator, using the limiting behavior of $\hat {\nn g}_N$:
\begin{prop}\label{sec:fract-ornst-uhlenb}
  Let $X_t$ be the stationary fOU process with parameters
  $\theta=(H,\lambda,\sigma)$. Then, for any positive-definite matrix $A$ and
  under the conditions of Lemma \ref{sec:lemainjectivity}, the
  GMM estimator $\hat \theta_N$ of $\theta$ in equation (\ref{GMMest}) is consistent for any
  $H \in (0,1)$ and:
    \begin{align}\label{eq:5}
      \sqrt{N}(\hat \theta_N-\theta)\stackrel{d}{\rightarrow}N(0,\mathcal
      C(\theta)\Omega \,\mathcal C(\theta)')
    \end{align}
    where $\mathcal C(\theta)=[G(\theta)'AG(\theta)]^{-1}G(\theta)'A$ and
    $\Omega$ according to equation (\ref{eq:1}).
\begin{proof}
  The consistency is immediate from Theorem \ref{teoconsis1}, since $X_t$
  satisfies Assumption \ref{sec:ap1} locally. The asymptotic normality is
  attained since assumption \ref{sec:ap4} holds for any pair of filter lengths
  whose sum is greater or equal to 1, which is the case in this
  example. Therefore, we can apply Theorem \ref{sec:asympnorm} to get the desired result.
\end{proof}
\end{prop}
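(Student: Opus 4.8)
The plan is to derive this proposition as a direct specialization of the general consistency and normality theorems of Section \ref{sec:Jointgeneral} to the fOU process, so the entire task reduces to verifying that Assumptions \ref{sec:ap1}, \ref{sec:ap2} and \ref{sec:ap4} hold in this concrete setting. Once they do, Theorem \ref{teoconsis1} supplies consistency and Theorem \ref{sec:asympnorm} supplies the stated CLT with sandwich covariance $\mathcal C(\theta)\Omega\,\mathcal C(\theta)'$.

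First I would handle consistency. Assumption \ref{sec:ap1} (identifiability) is exactly the content of Lemma \ref{sec:lemainjectivity}, which establishes injectivity of the fOU autocovariance map $\nns\rho_{\theta,p}(\alpha)$ under the hypotheses $\alpha<1$, known sign of $\sigma$, and $0<\lambda<\exp(\Psi(3))$; these are the ``conditions of Lemma \ref{sec:lemainjectivity}'' invoked in the statement, and crucially none of them depends on $N$. Assumption \ref{sec:ap2} holds because we are given at least three filters, so the filter matrix $B$ attains the required rank $p\le\mathrm{rank}(B)\le L$. With both assumptions in force, Theorem \ref{teoconsis1} yields $\hat\theta_N\stackrel{a.s.}{\rightarrow}\theta$. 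Note that this half uses only identifiability and the filter rank condition, so it holds for every $H\in(0,1)$ with no further restriction.

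For the CLT the additional ingredient is Assumption \ref{sec:ap4}, the requirement $R(u|i,j)=\min(1,|u|^{l_i+l_j})\bar f_{\theta_0}(u)\in L^2(-\pi,\pi)$. The governing feature is the spectral singularity $f_\theta(x)\sim C|x|^{1-2H}$ at the origin, inherited by the aliased density $\bar f_{\theta_0}$; squaring and integrating against the regularizing factor $|u|^{l_i+l_j}$ near $u=0$ leads to the exponent condition $l_i+l_j>2H-\tfrac{3}{2}$. Since the filters are chosen with all orders at least $1$, every pair obeys $l_i+l_j\ge 1>2H-\tfrac{3}{2}$ for all $H\in(0,1)$, which is precisely Case 1 of Lemma \ref{sec:asymptotic-normality-2}; hence Assumption \ref{sec:ap4} holds across the whole range. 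Feeding this into Lemma \ref{sec:asymptotic-normality} produces the joint CLT \eqref{eq:ghatstats} for $\sqrt N\,\hat{\nn g}_N(\theta)$ with covariance $\Omega$ as in \eqref{eq:1}, and combining it with the non-singularity of $G(\theta)'AG(\theta)$ from Lemma \ref{asymnorm1} and the linearization/Slutsky argument packaged in Theorem \ref{sec:asympnorm} delivers \eqref{eq:5}.

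The real difficulty lies not in this proposition, which is essentially a bookkeeping step, but in the two supporting lemmas it consumes. The hard part is Lemma \ref{sec:lemainjectivity}: proving that the finite autocovariance vector $\nns\rho_{\theta,p}(\alpha)$ is genuinely injective jointly in $(H,\lambda,\sigma)$ demands a careful study of how the three parameters together shape $\rho_\theta$ at the sampling lags, and it is this analysis that forces the upper bound on $\lambda$ and the smallness of $\alpha$. The second sensitive point, absorbed into Lemma \ref{sec:asymptotic-normality-2}, is controlling the interplay between the singularity $|x|^{1-2H}$ and the filter regularization uniformly in $H\in(0,1)$; using filters of order at least $1$ is exactly what neutralizes the Breuer-Major obstruction that would otherwise surface for $H\ge \tfrac{3}{4}$.
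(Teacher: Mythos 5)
Your proposal is correct and follows essentially the same route as the paper: consistency via Theorem \ref{teoconsis1} once Assumption \ref{sec:ap1} is supplied by Lemma \ref{sec:lemainjectivity} (with Assumption \ref{sec:ap2} from the choice of at least three filters), and the CLT via Theorem \ref{sec:asympnorm} after Case 1 of Lemma \ref{sec:asymptotic-normality-2} (filter orders summing to at least $1$, i.e. the exponent condition $2(l+l')>4H-3$) validates Assumption \ref{sec:ap4} for all $H\in(0,1)$. Your added detail on the spectral singularity $|x|^{1-2H}$ and the derivation of the exponent threshold matches the paper's proof of the supporting lemma exactly, so there is nothing to flag.
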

As we noted in section \ref{sec:efficiency}, we chose $A=\Omega^{-1}$ because
this ensures that the asymptotic variance is minimal. If this is the case the
asymptotic variance can be simplified to:
\begin{align*}
  [G(\theta)'\Omega^{-1}G(\theta)]^{-1}.
\end{align*}

\section{Numerical Performance of the GMM estimators for the fractional OU process}\label{sec:numer-perf-gmm}

In this section we test empirically the properties of the GMM estimator of
$(H,\lambda,\sigma)$ for the fOU case. We are especially interested in
checking the performance of the joint estimator for a fixed value of $N$, particularly when $N$ is not sufficiently large that the constants coming from the asymptotic variances become secondary concerns. As noted below, we run tests for $N=1000$ and $N=600$ observation times of a single path of the stochastic process, which represents a realistic dataset size for such questions as the estmation of memory length in financial markets under stochastic volatility (see \cite{Chronopoulou2012}) or in late-Holocene paleoclimatology (See \cite{Barboza2014}), where single observation paths at discrete times are all that is available. These are examples of situations which are not data-rich, but where the amount of data should be sufficient to provide evidence of long memory. Specifically in the case of financial markets with stochastic volatility memory, evidence exists that such memory does not persist for longer than a few weeks at a given level. Assuming for instance that one has access to 20 working days of stock-, option-, and/or volatility-market data, which corresponds to calibrating models to one-month options, in order to stay in the realm of continuous-time models, i.e. to avoid having to consider jump models to account for market microstructure, the best one typically hopes for (liberal estimate) is to observe prices every 5 to 15 minutes depending on option liquidity. This would correspond to roughly 600 to 2000 datapoints before one would need to recalibrate or change the model, hence our order of magnitudes for $N$.  This section provides empirical estimates of
the bias and variance of our joint estimator on all three parameters, and a comparison to the the joint estimation of the pair $(H,\sigma)$ when $\lambda$ is fixed. 

First we choose a fixed step size among observations ($\alpha$). In order to
simulate the fOU process we employ the formula \eqref{covmedio}
together with the Choleski
decomposition of the autocovariance matrix of this gaussian process. This
procedure gives us $m$ independently-generated paths of the fOU process, each such path, sampled at the $N=1000$ observation times, representing one replication of the single-trajectory dataset available in practice. In our simulations we use $m=1000$ replications. We decided to use
finite-difference filters since they are the most typical $p$-vanishing-moments
filters in the literature (see \cite{Coeurjolly2001,
  chronopoulou2009variations,Brouste2012} for example). The filters were selected
sequentially, i.e. we choose $l_i=i$ for $i=1,\ldots,L$, for the purpose of
ensuring that the respective matrix $B$ satisfies Assumption
\ref{sec:ap2} in page \pageref{sec:ap2}.
   
In order to solve numerically for $\hat \theta_N=(\hat H_N,\hat \lambda_N,\hat \sigma_N)$ in
(\ref{GMMest}), we use the quasi-Newton L-BFGS-R algorithm contained in the
R-routine \verb|optim| (see \cite{byrd1995limited} and
\cite{nocedaloptim}). This optimization algorithm is designed for solving
non-linear problems with simple bounds. In order to check that there was an acceptable level of variability produced by the optimization method, 
we compared the theoretical bounds induced in formula \eqref{eq:5} with the empirical variance $e(Var)$ defined the largest eigenvalue of the empirical covariance matrix of the estimator vector. 99\% of the confidence regions constructed with the empirical variance contained the true parameter for each scenario. 

Finally, we ran different scenarios for the purpose of verification of our
GMM procedure. We chose four different values of the parameter $H$
(0.55, 0.65, 0.75 and 0.85) so that it is possible to assess the effect of
increasing levels of long-memory in our estimates' performance. In addition,
for each of the above cases we worked with subscenarios with an increase of
10\% in $\lambda$ and a decrease of $10\%$ in the value of $\sigma$, 
as a quick check on the robustness the empirical statistics. We noted no significant aberrant behavior in these various scenarios; though all results are reported below, readers may safely concentrate on the case $\lambda =\sigma =1$. We chose a maximum number of filters equal to 7, and since there are
three parameters to estimate, we selected an initial order equal to 3. The
scenarios were chosen so that we could analyze the impact of small changes on
the actual values of the parameters in a given fOU process.  

Tables \ref{tab:uno} and \ref{tab:dos} comprise three empirical meatrics of comparison among
scenarios: 
\begin{align*}
  \widehat{MSE}&:=\frac{1}{m}\sum_{i=1}^m\|\hat \theta_{N,i}-\theta\|^2\\
  e(\widehat{Var})&:=\text{maximum eigenvalue of}\;
  \widehat{Var}(\theta_N)\\
\widehat{Bias}^2&:=\left\|\frac{1}{m}\sum_{i=1}^m\hat \theta_{N,i}
  -\theta\right \|^2.
\end{align*}
where $\hat \theta_{N,i}$ is the GMM estimate of $\theta$ using a sample size
of $N$ which was obtained using the $i$-th fOU
replication. $\widehat{Var}(\theta_N)$ is the empirical covariance matrix of
the GMM estimator based on the $m$ replications. These metrics are computed as the number of filters increases
(i.e. as $L$ increases). The calculations of Table \ref{tab:uno} use a sample
size $N=1000$ and table \ref{tab:dos} use $N=600$. The second table also uses an increased time-step of $\alpha=0.5$ which in principle might affect estimator accuracy, though this seems to also depend on the value of $H$. We have also computed the empirical statistics when $\lambda$ is assumed to be known, and when using another class of filters, with results shown in Appendix \ref{sec:additional-tables}. Below are some conclusions:

\begin{table}
\caption{Comparison of Scenarios ($\alpha=0.1$).}
\begin{center}
\begin{tabular}{cc|ccc|ccc|ccc}
\toprule
\multicolumn{2}{c|}{} & \multicolumn{3}{c|}{$\lambda=1$, $\sigma=1$} &
\multicolumn{3}{c|}{$\lambda=1.1$, $\sigma=1$} &
\multicolumn{3}{c}{$\lambda=1$, $\sigma=0.9$}\\
\midrule
$H$& $L$&$\widehat{MSE}$& $e(\widehat{Var})$ &$\widehat{Bias}^2$& $\widehat{MSE}$&
$e(\widehat{Var})$ &$\widehat{Bias}^2$ & $\widehat{MSE}$& $e(\widehat{Var})$
&$\widehat{Bias}^2$\tabularnewline
\midrule
\multirow{5}{*}{0.25}&3&$0.058$&$0.057$&$0.000$&$0.054$&$0.053$&$0.000$&$0.048$&$0.047$&$0.000$\tabularnewline
&4&$0.049$&$0.047$&$0.000$&$0.048$&$0.047$&$0.000$&$0.042$&$0.041$&$0.000$\tabularnewline
&5&$0.045$&$0.043$&$0.000$&$0.045$&$0.043$&$0.000$&$0.040$&$0.039$&$0.000$\tabularnewline
&6&$0.043$&$0.042$&$0.000$&$0.045$&$0.043$&$0.000$&$0.038$&$0.037$&$0.000$\tabularnewline
&7&$0.043$&$0.042$&$0.000$&$0.045$&$0.043$&$0.000$&$0.038$&$0.037$&$0.000$\tabularnewline
\midrule
\multirow{5}{*}{0.40}&3&$0.065$&$0.062$&$0.001$&$0.082$&$0.078$&$0.002$&$0.066$&$0.063$&$0.001$\tabularnewline
&4&$0.058$&$0.056$&$0.001$&$0.074$&$0.070$&$0.002$&$0.062$&$0.060$&$0.001$\tabularnewline
&5&$0.053$&$0.051$&$0.000$&$0.067$&$0.064$&$0.002$&$0.058$&$0.056$&$0.001$\tabularnewline
&6&$0.052$&$0.049$&$0.000$&$0.068$&$0.064$&$0.002$&$0.056$&$0.054$&$0.001$\tabularnewline
&7&$0.052$&$0.049$&$0.000$&$0.068$&$0.064$&$0.002$&$0.057$&$0.054$&$0.001$\tabularnewline
\midrule
\multirow{5}{*}{0.55}&3&$0.061$&$0.056$&$0.002$&$0.080$&$0.074$&$0.003$&$0.062$&$0.059$&$0.002$\tabularnewline
&4&$0.060$&$0.055$&$0.002$&$0.079$&$0.074$&$0.003$&$0.063$&$0.059$&$0.002$\tabularnewline
&5&$0.060$&$0.055$&$0.002$&$0.080$&$0.075$&$0.003$&$0.064$&$0.060$&$0.002$\tabularnewline
&6&$0.061$&$0.055$&$0.002$&$0.081$&$0.075$&$0.003$&$0.063$&$0.059$&$0.002$\tabularnewline
&7&$0.061$&$0.055$&$0.002$&$0.081$&$0.075$&$0.003$&$0.064$&$0.060$&$0.002$\tabularnewline
\midrule
\multirow{5}{*}{0.70}&3&$0.115$&$0.103$&$0.008$&$0.136$&$0.124$&$0.007$&$0.112$&$0.102$&$0.006$\tabularnewline
&4&$0.115$&$0.103$&$0.007$&$0.130$&$0.119$&$0.007$&$0.105$&$0.096$&$0.005$\tabularnewline
&5&$0.109$&$0.099$&$0.006$&$0.126$&$0.116$&$0.006$&$0.102$&$0.094$&$0.004$\tabularnewline
&6&$0.104$&$0.095$&$0.005$&$0.120$&$0.111$&$0.005$&$0.099$&$0.091$&$0.004$\tabularnewline
&7&$0.106$&$0.097$&$0.005$&$0.118$&$0.109$&$0.005$&$0.096$&$0.088$&$0.004$\tabularnewline
\midrule
\multirow{5}{*}{0.85}&3&$0.258$&$0.203$&$0.042$&$0.248$&$0.202$&$0.034$&$0.233$&$0.191$&$0.031$\tabularnewline
&4&$0.219$&$0.197$&$0.009$&$0.244$&$0.225$&$0.008$&$0.224$&$0.205$&$0.009$\tabularnewline
&5&$0.179$&$0.162$&$0.005$&$0.205$&$0.189$&$0.005$&$0.185$&$0.171$&$0.004$\tabularnewline
&6&$0.160$&$0.145$&$0.004$&$0.177$&$0.163$&$0.003$&$0.163$&$0.150$&$0.003$\tabularnewline
&7&$0.156$&$0.141$&$0.004$&$0.169$&$0.155$&$0.003$&$0.154$&$0.141$&$0.003$\tabularnewline
\bottomrule
\end{tabular}
\end{center}
\label{tab:uno}
\end{table}

\begin{table}
\caption{Comparison of Scenarios ($\alpha=0.5$).}
\begin{center}
\begin{tabular}{cc|ccc|ccc|ccc}
\toprule
\multicolumn{2}{c|}{} & \multicolumn{3}{c|}{$\lambda=1$, $\sigma=1$} &
\multicolumn{3}{c|}{$\lambda=1.1$, $\sigma=1$} &
\multicolumn{3}{c}{$\lambda=1$, $\sigma=0.9$}\\
\midrule
$H$& $L$&$\widehat{MSE}$& $e(\widehat{Var})$ &$\widehat{Bias}^2$& $\widehat{MSE}$&
$e(\widehat{Var})$ &$\widehat{Bias}^2$ & $\widehat{MSE}$& $e(\widehat{Var})$
&$\widehat{Bias}^2$\tabularnewline
\midrule
\multirow{5}{*}{0.25}&3&$0.314$&$0.295$&$0.017$&$0.355$&$0.325$&$0.028$&$0.337$&$0.312$&$0.023$\tabularnewline
&4&$0.275$&$0.259$&$0.015$&$0.368$&$0.335$&$0.031$&$0.289$&$0.265$&$0.022$\tabularnewline
&5&$0.264$&$0.248$&$0.014$&$0.348$&$0.320$&$0.027$&$0.272$&$0.250$&$0.021$\tabularnewline
&6&$0.261$&$0.245$&$0.014$&$0.344$&$0.318$&$0.025$&$0.264$&$0.244$&$0.019$\tabularnewline
&7&$0.265$&$0.248$&$0.016$&$0.349$&$0.321$&$0.027$&$0.264$&$0.243$&$0.019$\tabularnewline
\midrule
\multirow{5}{*}{0.40}&3&$0.147$&$0.143$&$0.002$&$0.169$&$0.166$&$0.001$&$0.145$&$0.141$&$0.003$\tabularnewline
&4&$0.137$&$0.133$&$0.002$&$0.154$&$0.152$&$0.000$&$0.135$&$0.132$&$0.002$\tabularnewline
&5&$0.132$&$0.128$&$0.002$&$0.149$&$0.147$&$0.000$&$0.133$&$0.130$&$0.002$\tabularnewline
&6&$0.131$&$0.128$&$0.002$&$0.151$&$0.149$&$0.000$&$0.130$&$0.127$&$0.002$\tabularnewline
&7&$0.132$&$0.129$&$0.002$&$0.150$&$0.148$&$0.000$&$0.130$&$0.127$&$0.001$\tabularnewline
\midrule
\multirow{5}{*}{0.55}&3&$0.198$&$0.169$&$0.030$&$0.256$&$0.224$&$0.043$&$0.207$&$0.174$&$0.027$\tabularnewline
&4&$0.197$&$0.167$&$0.028$&$0.254$&$0.211$&$0.041$&$0.202$&$0.166$&$0.035$\tabularnewline
&5&$0.186$&$0.157$&$0.027$&$0.240$&$0.198$&$0.041$&$0.189$&$0.156$&$0.031$\tabularnewline
&6&$0.173$&$0.148$&$0.023$&$0.212$&$0.178$&$0.032$&$0.179$&$0.148$&$0.029$\tabularnewline
&7&$0.173$&$0.149$&$0.023$&$0.225$&$0.188$&$0.036$&$0.180$&$0.148$&$0.030$\tabularnewline
\midrule
\multirow{5}{*}{0.70}&3&$0.174$&$0.169$&$0.001$&$0.254$&$0.230$&$0.020$&$0.170$&$0.166$&$0.000$\tabularnewline
&4&$0.207$&$0.201$&$0.002$&$0.237$&$0.229$&$0.005$&$0.216$&$0.210$&$0.003$\tabularnewline
&5&$0.179$&$0.174$&$0.002$&$0.218$&$0.211$&$0.004$&$0.184$&$0.180$&$0.001$\tabularnewline
&6&$0.170$&$0.166$&$0.001$&$0.208$&$0.202$&$0.003$&$0.178$&$0.175$&$0.001$\tabularnewline
&7&$0.173$&$0.168$&$0.001$&$0.208$&$0.203$&$0.002$&$0.175$&$0.172$&$0.001$\tabularnewline
\midrule
\multirow{5}{*}{0.85}&3&$0.201$&$0.188$&$0.002$&$0.204$&$0.193$&$0.002$&$0.168$&$0.158$&$0.001$\tabularnewline
&4&$0.195$&$0.181$&$0.004$&$0.206$&$0.191$&$0.006$&$0.167$&$0.154$&$0.005$\tabularnewline
&5&$0.177$&$0.164$&$0.003$&$0.189$&$0.175$&$0.006$&$0.156$&$0.144$&$0.005$\tabularnewline
&6&$0.168$&$0.156$&$0.002$&$0.175$&$0.163$&$0.004$&$0.142$&$0.131$&$0.004$\tabularnewline
&7&$0.166$&$0.155$&$0.002$&$0.175$&$0.163$&$0.004$&$0.145$&$0.134$&$0.004$\tabularnewline
\bottomrule
\end{tabular}\end{center}
\label{tab:dos}
\end{table}

\begin{enumerate}
\item The empirical MSE ($\widehat{MSE}$) seems to decrease for a fixed scenario
  when the number of filters (number of moment conditions $L$) increases.
  For $\alpha=0.1$, we observe an
  increase in the empirical MSE as the Hurst parameter increases; this
  fact does not persist when $\alpha=0.5$, where the empirical MSEs are larger by up to a
  factor of 3, particularly when $H$ is small. The case of only 600 datapoints with a larger
  time interval appears to reach the limit of our joint estimation method's reliability when
  trying to
  estimate all three paramters together, since the MSE there exceeds 10\% of the value of
  the parameter routinely in this case. See however, the comments below when $\lambda$ is
  assumed known. 
\item As the drift and diffusion parameters $\lambda$ and $\sigma$ are perturbed,
  the empirical MSE seems to
  change mainly due to the change in the magnitude of the
  variance when that happens.
\item Sensitivity of $\widehat{Bias}^2$ and $e(\widehat{Var})$ to $\lambda$ and $\sigma$
  are noted as moderate, and for each case the change persists as the number of filters
  increases. Dependence of $e(\widehat{Var})$ on $\sigma$ is easily explained in theory
  since the asymptotic covariance in Proposition \ref{sec:fract-ornst-uhlenb}
  is proportional to $\sigma^2$. The two empirical statistics appear to be largely
  insensitive to the number of filters, however.
\item If we compare Tables \ref{tab:uno} and \ref{tab:dos} with Tables
  \ref{tab:lambdafixed1} and \ref{tab:lambdafixed2} in Appendix \ref{sec:additional-tables}, where the estimation is
  performed keeping the parameter $\lambda$ fixed on each scenario, we notice
  that most of the empirical MSE is due to the variation in $\lambda$
  estimation. The level of precision in essentially all scenarios is higher than 100 to 1,
  and the estimators can be considered as empirically unbiased. This holds 
  even in the unfavorable case of 600 observation times and larger time step
  (Table \ref{tab:lambdafixed2}). We also note that sensitivity of the estimators' 
  performances to the number of filters is very low, except for large values of $H$ where it
  seems preferably to choose at least $L=3$ even though there are only two parameters
  to estimate.
\item Finally Tables \ref{tab:wavelets1} and \ref{tab:wavelets2} in Appendix \ref{sec:additional-tables} contain the
  joint estimation of the three parameters under the same scenarios when the filters are
  Daubechies wavelets. We note that if we compare those tables with Tables
  \ref{tab:uno} and \ref{tab:dos}, 
  performance is essentially unchanged from one filter-class to the other. The slight
  improvement from Daubechies filters compared to finite-difference filters when the 
  Hurst parameter is larger than $\frac 1 2$ does not seem to be significant.    
\end{enumerate}

\bibliographystyle{amsplain}
\bibliography{biblioteca}

\appendix

\pagebreak

\section{Additional Tables}
\label{sec:additional-tables}

\begin{table}[!h]
\caption{Comparison of Scenarios ($\alpha=0.1$). Estimation of $(H,\sigma)$
  with $\lambda$ fixed.}
{\footnotesize
\begin{center}
\begin{tabular}{cc|ccc|ccc|ccc}
\toprule
\multicolumn{2}{c|}{} & \multicolumn{3}{c|}{$\lambda=1$, $\sigma=1$} &
\multicolumn{3}{c|}{$\lambda=1.1$, $\sigma=1$} &
\multicolumn{3}{c}{$\lambda=1$, $\sigma=0.9$}\\
\midrule
$H$& $L$&$\widehat{MSE}$& $e(\widehat{Var})$ &$\widehat{Bias}^2$& $\widehat{MSE}$&
$e(\widehat{Var})$ &$\widehat{Bias}^2$ & $\widehat{MSE}$& $e(\widehat{Var})$
&$\widehat{Bias}^2$\tabularnewline
\midrule
\multirow{5}{*}{0.25}&2&$0.00174$&$0.00166$&$1.26 \times 10^{-06}$&$0.00180$&$0.00171$&$1.58 \times 10^{-06}$&$0.00153$&$0.00145$&$1.30 \times 10^{-06}$\tabularnewline
&3&$0.00134$&$0.00125$&$5.79 \times 10^{-07}$&$0.00130$&$0.00121$&$3.81 \times 10^{-07}$&$0.00116$&$0.00108$&$4.23 \times 10^{-07}$\tabularnewline
&4&$0.00128$&$0.00119$&$2.62 \times 10^{-07}$&$0.00128$&$0.00119$&$2.87 \times 10^{-07}$&$0.00113$&$0.00104$&$5.19 \times 10^{-07}$\tabularnewline
&5&$0.00128$&$0.00120$&$2.92 \times 10^{-07}$&$0.00128$&$0.00119$&$2.05 \times 10^{-07}$&$0.00113$&$0.00105$&$5.36 \times 10^{-07}$\tabularnewline
&6&$0.00128$&$0.00120$&$1.11 \times 10^{-07}$&$0.00128$&$0.00119$&$1.57 \times 10^{-07}$&$0.00113$&$0.00104$&$5.29 \times 10^{-07}$\tabularnewline
\midrule
\multirow{5}{*}{0.40}&2&$0.00275$&$0.00268$&$1.15 \times 10^{-06}$&$0.00290$&$0.00281$&$2.32 \times 10^{-05}$&$0.00235$&$0.00227$&$6.58 \times 10^{-07}$\tabularnewline
&3&$0.00218$&$0.00211$&$3.49 \times 10^{-06}$&$0.00225$&$0.00217$&$1.15 \times 10^{-05}$&$0.00175$&$0.00167$&$1.54 \times 10^{-07}$\tabularnewline
&4&$0.00215$&$0.00207$&$4.15 \times 10^{-06}$&$0.00218$&$0.00210$&$9.43 \times 10^{-06}$&$0.00170$&$0.00163$&$1.72 \times 10^{-07}$\tabularnewline
&5&$0.00213$&$0.00206$&$4.25 \times 10^{-06}$&$0.00218$&$0.00210$&$9.19 \times 10^{-06}$&$0.00170$&$0.00163$&$2.50 \times 10^{-07}$\tabularnewline
&6&$0.00214$&$0.00206$&$4.49 \times 10^{-06}$&$0.00217$&$0.00209$&$8.77 \times 10^{-06}$&$0.00169$&$0.00161$&$2.73 \times 10^{-07}$\tabularnewline
\midrule
\multirow{5}{*}{0.55}&2&$0.00442$&$0.00435$&$1.62 \times 10^{-05}$&$0.00413$&$0.00404$&$1.65 \times 10^{-05}$&$0.00372$&$0.00364$&$1.38 \times 10^{-05}$\tabularnewline
&3&$0.00318$&$0.00311$&$1.76 \times 10^{-05}$&$0.00307$&$0.00299$&$1.54 \times 10^{-05}$&$0.00273$&$0.00267$&$1.44 \times 10^{-06}$\tabularnewline
&4&$0.00312$&$0.00304$&$1.71 \times 10^{-05}$&$0.00306$&$0.00298$&$1.87 \times 10^{-05}$&$0.00268$&$0.00262$&$1.12 \times 10^{-06}$\tabularnewline
&5&$0.00312$&$0.00305$&$1.60 \times 10^{-05}$&$0.00306$&$0.00298$&$1.84 \times 10^{-05}$&$0.00267$&$0.00261$&$1.05 \times 10^{-06}$\tabularnewline
&6&$0.00311$&$0.00304$&$1.74 \times 10^{-05}$&$0.00306$&$0.00298$&$1.85 \times 10^{-05}$&$0.00266$&$0.00260$&$9.60 \times 10^{-07}$\tabularnewline
\midrule
\multirow{5}{*}{0.70}&2&$0.00817$&$0.00793$&$1.90 \times 10^{-04}$&$0.00837$&$0.00827$&$4.23 \times 10^{-05}$&$0.00669$&$0.00652$&$1.16 \times 10^{-04}$\tabularnewline
&3&$0.00426$&$0.00421$&$5.33 \times 10^{-06}$&$0.00462$&$0.00458$&$2.40 \times 10^{-06}$&$0.00352$&$0.00347$&$4.90 \times 10^{-06}$\tabularnewline
&4&$0.00426$&$0.00422$&$5.53 \times 10^{-06}$&$0.00458$&$0.00454$&$2.75 \times 10^{-06}$&$0.00353$&$0.00349$&$4.87 \times 10^{-06}$\tabularnewline
&5&$0.00426$&$0.00422$&$5.92 \times 10^{-06}$&$0.00458$&$0.00453$&$3.18 \times 10^{-06}$&$0.00353$&$0.00349$&$4.92 \times 10^{-06}$\tabularnewline
&6&$0.00425$&$0.00421$&$6.12 \times 10^{-06}$&$0.00457$&$0.00452$&$3.10 \times 10^{-06}$&$0.00355$&$0.00351$&$4.79 \times 10^{-06}$\tabularnewline
\midrule
\multirow{5}{*}{0.85}&2&$0.03361$&$0.03300$&$5.44 \times 10^{-04}$&$0.03137$&$0.03053$&$7.81 \times 10^{-04}$&$0.03046$&$0.02985$&$5.11 \times 10^{-04}$\tabularnewline
&3&$0.01167$&$0.01165$&$4.39 \times 10^{-06}$&$0.01035$&$0.01032$&$1.72 \times 10^{-05}$&$0.00957$&$0.00954$&$1.74 \times 10^{-05}$\tabularnewline
&4&$0.01172$&$0.01170$&$4.01 \times 10^{-06}$&$0.01034$&$0.01031$&$1.74 \times 10^{-05}$&$0.00947$&$0.00943$&$1.95 \times 10^{-05}$\tabularnewline
&5&$0.01170$&$0.01169$&$3.99 \times 10^{-06}$&$0.01025$&$0.01022$&$1.68 \times 10^{-05}$&$0.00947$&$0.00943$&$2.26 \times 10^{-05}$\tabularnewline
&6&$0.01171$&$0.01169$&$3.98 \times 10^{-06}$&$0.01026$&$0.01023$&$1.71 \times 10^{-05}$&$0.00945$&$0.00942$&$1.91 \times 10^{-05}$\tabularnewline
\bottomrule
\end{tabular}\end{center}
}
\label{tab:lambdafixed1}
\end{table}

\begin{table}[!h]
\caption{Comparison of Scenarios ($\alpha=0.5$). Estimation of $(H,\sigma)$
  with $\lambda$ fixed.}
{\footnotesize
\begin{center}
\begin{tabular}{cc|ccc|ccc|ccc}
\toprule
\multicolumn{2}{c|}{} & \multicolumn{3}{c|}{$\lambda=1$, $\sigma=1$} &
\multicolumn{3}{c|}{$\lambda=1.1$, $\sigma=1$} &
\multicolumn{3}{c}{$\lambda=1$, $\sigma=0.9$}\\
\midrule
$H$& $L$&$\widehat{MSE}$& $e(\widehat{Var})$ &$\widehat{Bias}^2$& $\widehat{MSE}$&
$e(\widehat{Var})$ &$\widehat{Bias}^2$ & $\widehat{MSE}$& $e(\widehat{Var})$
&$\widehat{Bias}^2$\tabularnewline
\midrule
\multirow{5}{*}{0.25}&2&$0.00179$&$0.00121$&$8.34 \times 10^{-07}$&$0.00189$&$0.00131$&$2.47 \times 10^{-06}$&$0.00159$&$0.00106$&$5.11 \times 10^{-07}$\tabularnewline
&3&$0.00179$&$0.00122$&$1.12 \times 10^{-06}$&$0.00186$&$0.00129$&$3.50 \times 10^{-06}$&$0.00159$&$0.00106$&$6.27 \times 10^{-07}$\tabularnewline
&4&$0.00180$&$0.00122$&$1.18 \times 10^{-06}$&$0.00185$&$0.00128$&$3.79 \times 10^{-06}$&$0.00158$&$0.00106$&$6.82 \times 10^{-07}$\tabularnewline
&5&$0.00178$&$0.00121$&$1.41 \times 10^{-06}$&$0.00185$&$0.00128$&$3.72 \times 10^{-06}$&$0.00158$&$0.00105$&$7.32 \times 10^{-07}$\tabularnewline
&6&$0.00179$&$0.00121$&$1.26 \times 10^{-06}$&$0.00186$&$0.00128$&$3.53 \times 10^{-06}$&$0.00158$&$0.00106$&$1.05 \times 10^{-06}$\tabularnewline
\midrule
\multirow{5}{*}{0.40}&2&$0.00169$&$0.00124$&$7.24 \times 10^{-06}$&$0.00172$&$0.00125$&$2.14 \times 10^{-06}$&$0.00152$&$0.00111$&$5.87 \times 10^{-08}$\tabularnewline
&3&$0.00171$&$0.00125$&$7.11 \times 10^{-06}$&$0.00172$&$0.00125$&$1.99 \times 10^{-06}$&$0.00153$&$0.00112$&$6.10 \times 10^{-08}$\tabularnewline
&4&$0.00170$&$0.00124$&$7.28 \times 10^{-06}$&$0.00172$&$0.00125$&$1.90 \times 10^{-06}$&$0.00153$&$0.00112$&$7.78 \times 10^{-08}$\tabularnewline
&5&$0.00170$&$0.00124$&$7.72 \times 10^{-06}$&$0.00171$&$0.00125$&$1.77 \times 10^{-06}$&$0.00152$&$0.00111$&$1.34 \times 10^{-07}$\tabularnewline
&6&$0.00170$&$0.00124$&$7.88 \times 10^{-06}$&$0.00170$&$0.00124$&$1.70 \times 10^{-06}$&$0.00152$&$0.00111$&$1.77 \times 10^{-07}$\tabularnewline
\midrule
\multirow{5}{*}{0.55}&2&$0.00183$&$0.00150$&$1.02 \times 10^{-05}$&$0.00179$&$0.00149$&$3.18 \times 10^{-07}$&$0.00152$&$0.00123$&$1.95 \times 10^{-07}$\tabularnewline
&3&$0.00180$&$0.00149$&$1.07 \times 10^{-05}$&$0.00175$&$0.00145$&$3.71 \times 10^{-07}$&$0.00147$&$0.00119$&$1.60 \times 10^{-07}$\tabularnewline
&4&$0.00180$&$0.00149$&$1.05 \times 10^{-05}$&$0.00175$&$0.00145$&$3.68 \times 10^{-07}$&$0.00146$&$0.00118$&$1.54 \times 10^{-07}$\tabularnewline
&5&$0.00179$&$0.00148$&$9.46 \times 10^{-06}$&$0.00173$&$0.00143$&$4.20 \times 10^{-07}$&$0.00146$&$0.00118$&$2.30 \times 10^{-07}$\tabularnewline
&6&$0.00179$&$0.00148$&$9.85 \times 10^{-06}$&$0.00175$&$0.00145$&$3.33 \times 10^{-07}$&$0.00146$&$0.00118$&$2.49 \times 10^{-07}$\tabularnewline
\midrule
\multirow{5}{*}{0.70}&2&$0.00305$&$0.00279$&$2.82 \times 10^{-05}$&$0.00334$&$0.00313$&$6.76 \times 10^{-06}$&$0.00288$&$0.00265$&$2.06 \times 10^{-05}$\tabularnewline
&3&$0.00270$&$0.00248$&$7.47 \times 10^{-06}$&$0.00308$&$0.00288$&$4.75 \times 10^{-06}$&$0.00250$&$0.00231$&$3.99 \times 10^{-06}$\tabularnewline
&4&$0.00270$&$0.00248$&$6.60 \times 10^{-06}$&$0.00308$&$0.00288$&$4.76 \times 10^{-06}$&$0.00251$&$0.00231$&$4.01 \times 10^{-06}$\tabularnewline
&5&$0.00269$&$0.00247$&$6.13 \times 10^{-06}$&$0.00308$&$0.00288$&$4.64 \times 10^{-06}$&$0.00249$&$0.00229$&$3.51 \times 10^{-06}$\tabularnewline
&6&$0.00269$&$0.00247$&$6.17 \times 10^{-06}$&$0.00306$&$0.00286$&$4.71 \times 10^{-06}$&$0.00247$&$0.00227$&$3.06 \times 10^{-06}$\tabularnewline
\midrule
\multirow{5}{*}{0.85}&2&$0.01554$&$0.01513$&$3.02 \times 10^{-04}$&$0.01501$&$0.01457$&$3.46 \times 10^{-04}$&$0.01298$&$0.01233$&$5.28 \times 10^{-04}$\tabularnewline
&3&$0.01008$&$0.00999$&$2.00 \times 10^{-05}$&$0.01027$&$0.01018$&$1.46 \times 10^{-05}$&$0.00838$&$0.00826$&$4.72 \times 10^{-05}$\tabularnewline
&4&$0.00998$&$0.00989$&$1.51 \times 10^{-05}$&$0.01016$&$0.01007$&$1.43 \times 10^{-05}$&$0.00830$&$0.00818$&$5.12 \times 10^{-05}$\tabularnewline
&5&$0.00974$&$0.00965$&$1.41 \times 10^{-05}$&$0.00991$&$0.00983$&$1.25 \times 10^{-05}$&$0.00806$&$0.00794$&$5.39 \times 10^{-05}$\tabularnewline
&6&$0.00974$&$0.00965$&$1.37 \times 10^{-05}$&$0.00962$&$0.00954$&$1.24 \times 10^{-05}$&$0.00798$&$0.00786$&$4.55 \times 10^{-05}$\tabularnewline
\bottomrule
\end{tabular}\end{center}
}
\label{tab:lambdafixed2}
\end{table}

\begin{table}[!h]
\caption{Comparison of Scenarios ($\alpha=0.1$). Estimation of $(H,\lambda,\sigma)$
  with Daubechies wavelets.}
\begin{center}
\begin{tabular}{cc|ccc|ccc|ccc}
\toprule
\multicolumn{2}{c|}{} & \multicolumn{3}{c|}{$\lambda=1$, $\sigma=1$} &
\multicolumn{3}{c|}{$\lambda=1.1$, $\sigma=1$} &
\multicolumn{3}{c}{$\lambda=1$, $\sigma=0.9$}\\
\midrule
$H$& $L$&$\widehat{MSE}$& $e(\widehat{Var})$ &$\widehat{Bias}^2$& $\widehat{MSE}$&
$e(\widehat{Var})$ &$\widehat{Bias}^2$ & $\widehat{MSE}$& $e(\widehat{Var})$
&$\widehat{Bias}^2$\tabularnewline
\midrule
\multirow{5}{*}{0.25}&3&$0.045$&$0.044$&$0.000$&$0.053$&$0.052$&$0.000$&$0.045$&$0.044$&$0.000$\tabularnewline
&4&$0.042$&$0.041$&$0.000$&$0.047$&$0.046$&$0.000$&$0.042$&$0.040$&$0.000$\tabularnewline
&5&$0.042$&$0.040$&$0.000$&$0.047$&$0.046$&$0.000$&$0.041$&$0.040$&$0.000$\tabularnewline
&6&$0.041$&$0.040$&$0.000$&$0.047$&$0.045$&$0.000$&$0.040$&$0.039$&$0.000$\tabularnewline
&7&$0.041$&$0.040$&$0.000$&$0.047$&$0.045$&$0.000$&$0.040$&$0.039$&$0.000$\tabularnewline
\midrule
\multirow{5}{*}{0.40}&3&$0.061$&$0.059$&$0.000$&$0.077$&$0.074$&$0.001$&$0.064$&$0.061$&$0.001$\tabularnewline
&4&$0.059$&$0.056$&$0.000$&$0.074$&$0.071$&$0.001$&$0.063$&$0.060$&$0.001$\tabularnewline
&5&$0.060$&$0.057$&$0.001$&$0.074$&$0.071$&$0.001$&$0.062$&$0.059$&$0.001$\tabularnewline
&6&$0.059$&$0.057$&$0.001$&$0.074$&$0.071$&$0.001$&$0.063$&$0.059$&$0.001$\tabularnewline
&7&$0.060$&$0.058$&$0.001$&$0.075$&$0.072$&$0.001$&$0.063$&$0.060$&$0.001$\tabularnewline
\midrule
\multirow{5}{*}{0.55}&3&$0.071$&$0.064$&$0.004$&$0.087$&$0.081$&$0.003$&$0.068$&$0.063$&$0.003$\tabularnewline
&4&$0.072$&$0.065$&$0.004$&$0.084$&$0.079$&$0.002$&$0.069$&$0.064$&$0.003$\tabularnewline
&5&$0.071$&$0.064$&$0.004$&$0.086$&$0.081$&$0.002$&$0.069$&$0.063$&$0.003$\tabularnewline
&6&$0.070$&$0.064$&$0.003$&$0.086$&$0.081$&$0.002$&$0.069$&$0.063$&$0.003$\tabularnewline
&7&$0.070$&$0.064$&$0.004$&$0.087$&$0.081$&$0.002$&$0.068$&$0.063$&$0.003$\tabularnewline
\midrule
\multirow{5}{*}{0.70}&3&$0.136$&$0.125$&$0.007$&$0.174$&$0.157$&$0.012$&$0.137$&$0.123$&$0.010$\tabularnewline
&4&$0.116$&$0.107$&$0.004$&$0.137$&$0.127$&$0.005$&$0.110$&$0.101$&$0.005$\tabularnewline
&5&$0.105$&$0.098$&$0.002$&$0.126$&$0.118$&$0.004$&$0.102$&$0.095$&$0.004$\tabularnewline
&6&$0.101$&$0.094$&$0.002$&$0.123$&$0.115$&$0.003$&$0.097$&$0.091$&$0.003$\tabularnewline
&7&$0.102$&$0.095$&$0.002$&$0.121$&$0.113$&$0.003$&$0.099$&$0.092$&$0.003$\tabularnewline
\midrule
\multirow{5}{*}{0.85}&3&$0.268$&$0.240$&$0.003$&$0.284$&$0.247$&$0.001$&$0.235$&$0.202$&$0.007$\tabularnewline
&4&$0.184$&$0.167$&$0.006$&$0.208$&$0.190$&$0.007$&$0.172$&$0.156$&$0.006$\tabularnewline
&5&$0.168$&$0.152$&$0.005$&$0.189$&$0.173$&$0.006$&$0.151$&$0.138$&$0.004$\tabularnewline
&6&$0.157$&$0.142$&$0.004$&$0.175$&$0.160$&$0.005$&$0.142$&$0.128$&$0.005$\tabularnewline
&7&$0.154$&$0.139$&$0.004$&$0.173$&$0.157$&$0.005$&$0.138$&$0.125$&$0.004$\tabularnewline
\bottomrule
\end{tabular}
\label{tab:wavelets1}
\end{center}
\end{table}

\begin{table}[!tbp]
\caption{Comparison of Scenarios ($\alpha=0.5$). Estimation of $(H,\lambda,\sigma)$
  with Daubechies wavelets.}
\begin{center}
\begin{tabular}{cc|ccc|ccc|ccc}
\toprule
\multicolumn{2}{c|}{} & \multicolumn{3}{c|}{$\lambda=1$, $\sigma=1$} &
\multicolumn{3}{c|}{$\lambda=1.1$, $\sigma=1$} &
\multicolumn{3}{c}{$\lambda=1$, $\sigma=0.9$}\\
\midrule
$H$& $L$&$\widehat{MSE}$& $e(\widehat{Var})$ &$\widehat{Bias}^2$& $\widehat{MSE}$&
$e(\widehat{Var})$ &$\widehat{Bias}^2$ & $\widehat{MSE}$& $e(\widehat{Var})$
&$\widehat{Bias}^2$\tabularnewline
\midrule
\multirow{5}{*}{0.25}&3&$0.217$&$0.203$&$0.012$&$0.216$&$0.213$&$0.002$&$0.212$&$0.202$&$0.009$\tabularnewline
&4&$0.305$&$0.280$&$0.023$&$0.344$&$0.332$&$0.010$&$0.338$&$0.315$&$0.022$\tabularnewline
&5&$0.306$&$0.283$&$0.022$&$0.349$&$0.335$&$0.011$&$0.322$&$0.301$&$0.019$\tabularnewline
&6&$0.299$&$0.276$&$0.021$&$0.360$&$0.345$&$0.013$&$0.318$&$0.297$&$0.020$\tabularnewline
&7&$0.306$&$0.280$&$0.024$&$0.358$&$0.343$&$0.012$&$0.319$&$0.297$&$0.021$\tabularnewline
\midrule
\multirow{5}{*}{0.40}&3&$0.126$&$0.124$&$0.000$&$0.129$&$0.125$&$0.002$&$0.120$&$0.118$&$0.000$\tabularnewline
&4&$0.134$&$0.133$&$0.000$&$0.160$&$0.158$&$0.000$&$0.133$&$0.131$&$0.000$\tabularnewline
&5&$0.137$&$0.135$&$0.000$&$0.156$&$0.154$&$0.000$&$0.130$&$0.128$&$0.000$\tabularnewline
&6&$0.134$&$0.132$&$0.000$&$0.159$&$0.157$&$0.000$&$0.128$&$0.126$&$0.000$\tabularnewline
&7&$0.136$&$0.134$&$0.000$&$0.157$&$0.155$&$0.000$&$0.126$&$0.125$&$0.000$\tabularnewline
\midrule
\multirow{5}{*}{0.55}&3&$0.197$&$0.169$&$0.027$&$0.249$&$0.212$&$0.047$&$0.191$&$0.169$&$0.028$\tabularnewline
&4&$0.195$&$0.167$&$0.026$&$0.246$&$0.201$&$0.043$&$0.189$&$0.162$&$0.026$\tabularnewline
&5&$0.194$&$0.166$&$0.026$&$0.235$&$0.192$&$0.041$&$0.178$&$0.152$&$0.024$\tabularnewline
&6&$0.194$&$0.165$&$0.028$&$0.224$&$0.184$&$0.038$&$0.174$&$0.148$&$0.025$\tabularnewline
&7&$0.194$&$0.165$&$0.027$&$0.221$&$0.184$&$0.035$&$0.167$&$0.145$&$0.020$\tabularnewline
\midrule
\multirow{5}{*}{0.70}&3&$0.202$&$0.167$&$0.031$&$0.232$&$0.194$&$0.035$&$0.192$&$0.161$&$0.029$\tabularnewline
&4&$0.187$&$0.182$&$0.002$&$0.228$&$0.222$&$0.002$&$0.179$&$0.175$&$0.001$\tabularnewline
&5&$0.181$&$0.177$&$0.001$&$0.216$&$0.211$&$0.001$&$0.169$&$0.166$&$0.001$\tabularnewline
&6&$0.176$&$0.172$&$0.001$&$0.212$&$0.208$&$0.001$&$0.166$&$0.163$&$0.001$\tabularnewline
&7&$0.176$&$0.172$&$0.001$&$0.214$&$0.210$&$0.001$&$0.165$&$0.161$&$0.001$\tabularnewline
\midrule
\multirow{5}{*}{0.85}&3&$0.202$&$0.188$&$0.004$&$0.212$&$0.195$&$0.007$&$0.175$&$0.161$&$0.006$\tabularnewline
&4&$0.198$&$0.183$&$0.004$&$0.204$&$0.187$&$0.007$&$0.172$&$0.157$&$0.007$\tabularnewline
&5&$0.192$&$0.177$&$0.005$&$0.195$&$0.179$&$0.006$&$0.165$&$0.151$&$0.007$\tabularnewline
&6&$0.194$&$0.180$&$0.004$&$0.195$&$0.179$&$0.006$&$0.165$&$0.150$&$0.007$\tabularnewline
&7&$0.189$&$0.175$&$0.005$&$0.193$&$0.178$&$0.005$&$0.167$&$0.152$&$0.007$\tabularnewline
\bottomrule
\end{tabular}\end{center}
\label{tab:wavelets2}
\end{table}

\FloatBarrier
\section{Proofs.}
\label{sec:appendix-proofs}

\subsection*{Proof of Lemma \ref{lemaunif1}}
  First recall that $E[\varphi(t)^2]=V(\theta_0)$, where stationarity allows the above expression does not depend on $t$, and the data's true parameter is $\theta_0$. Note that, for any $\theta \in \Theta$ and using the ergodicity of $X_t$:
  \begin{align*}
    \|\hat {\nn g}_N(\theta)-\nn g_0(\theta)\|&=\left \|\frac{1}{N-L+1}\sum_{i=L}^N [\nns{\varphi}(t_i)^2-\nn{V}(\theta)]-[\mathbf{V}(\theta_0)-\mathbf{V}(\theta)]\right\|\\
&=\left \|\frac{1}{N-L+1}\sum_{i=L}^N \nns{\varphi}(t_i)^2-\mathbf{V}(\theta_0)\right\| \stackrel{a.s}{\rightarrow}0
  \end{align*}
where $\nns{\varphi}(t)^2:=(\varphi_1(t)^2,\ldots,\varphi_L(t)^2)'$ and $\mathbf{V}(\theta):=(V_1(\theta),\ldots,V_L(\theta))'$. Then we have:
\begin{align}\label{unifghat}
  \sup_{\theta \in \Theta}\|\hat {\nn g}_N(\theta)-\nn g_0(\theta)\|\stackrel{a.s.}{\rightarrow} 0.
\end{align}
Furthermore, based on the proof of Theorem 2.6 in \cite{mcfadden}:
% \begin{align*}
%     &|\hat Q_N(\theta)-Q_0(\theta)|\\
%   &=|(\hat {\nn g}_N(\theta)-\nn g_0(\theta))'\hat A_N(\hat {\nn g}_N(\theta)-\nn g_0(\theta))-\nn g_0(\theta)'(\hat A_N+A)\nn g_0(\theta)+2\hat {\nn g}_N(\theta)' \hat A_N\nn g_0(\theta)|\\
%   &\leq \|\hat {\nn g}_N(\theta)-\nn g_0(\theta)\|_2\cdot \|\hat A_N\|+\|\hat A_N-A\|\cdot \|\nn g_0(\theta)\|_2+\\
%   &\hspace{5cm}2\|\hat {\nn g}_N(\theta)-\nn g_0(\theta)\|_2\cdot \|\hat A_N+A\|\cdot \|\nn g_0(\theta)\|_2\\
%   &\stackrel{a.s.}{\rightarrow}0.
% \end{align*}
\begin{align*}
  &|\hat Q_N(\theta)-Q_0(\theta)|\\
  &\leq |(\hat {\nn g}_N(\theta)-\nn g_0(\theta))'A(\hat {\nn g}_N(\theta)-\nn g_0(\theta))|+2|(\hat {\nn g}_N(\theta)-\nn g_0(\theta))'A\nn g_0(\theta)|\\
  &\leq \|\hat {\nn g}_N(\theta)-\nn g_0(\theta)\|\cdot \|A\|_2+2 \|\hat {\nn g}_N(\theta)-\nn g_0(\theta)\|\cdot \|A\|\cdot \|g_0(\theta)\|\\
  &\stackrel{a.s.}{\rightarrow}0
\end{align*}
for any $\theta \in \Theta$. Here we used the following facts: (1) the matrix $A$ are deterministic, (2) $\nn g_0(\theta)$ is uniformly bounded (because $\rho_\theta(t)$ is continuous over the compact set $\Theta$) and (3) equation \eqref{unifghat}. Finally if we take the supremum over all $\theta \in \Theta$, the lemma holds.

\qed

\subsection*{Proof of Lemma \ref{unique}}
Note that $Q_0(\theta)$ can be written as (for any $\theta \in \Theta$):
\begin{align*}
  Q_0(\theta)&=\nn g_0(\theta)'A\nn g_0(\theta)\\
  &=[\mathbf{V}(\theta_0)-\mathbf{V}(\theta)]'A[\mathbf{V}(\theta_0)-\mathbf{V}(\theta)]\\
  &=
  \begin{bmatrix}
    \mathbf{b}_1'(\nns \rho_{\theta_0}(\alpha k)-\nns \rho_\theta(\alpha k))\\
    \vdots\\
    \mathbf{b}_L'(\nns \rho_{\theta_0}(\alpha k)-\nns \rho_\theta(\alpha k))
  \end{bmatrix}'A
  \begin{bmatrix}
    \mathbf{b}_1'(\nns \rho_{\theta_0}(\alpha k)-\nns \rho_\theta(\alpha k))\\
    \vdots\\
    \mathbf{b}_L'(\nns \rho_{\theta_0}(\alpha k)-\nns \rho_\theta(\alpha k))
  \end{bmatrix}\\
&=(\nns{\rho}_{\theta_0}(\alpha)-\nns{\rho}_\theta(\alpha))'B'AB(\nns{\rho}_{\theta_0}(\alpha)-\nns{\rho}_\theta(\alpha))
\end{align*}
Since $A>0$ and $B$ has column-rank between $p$ and $L$ (see assumption
\ref{sec:ap2}) we can diagonalize $B'AB$ to get:  
\begin{align*}
  Q_0(\theta)=\sum_{i=1}^p \lambda_i[\rho_{\theta_0}(\alpha\cdot
  (i-1))-\rho_\theta(\alpha\cdot (i-1))]^2+J(\theta)
\end{align*}
where $\{\lambda_i\}_{i=1}^p$ are non-zero
eigenvalues of $B'AB$ and $J(\theta)$ is a non-injective function of $\theta$
such that $J(\theta_0)=0$. Because of Assumption \ref{sec:ap1} the lemma holds.
\qed

\subsection*{Proof of Lemma \ref{asymnorm1}}
  \begin{itemize}
    \item[(i)]:
      Let $1\leq j\leq p$ and $\theta \in \Theta$. Then the $j$-th column of $\hat G_N(\theta)$ is:
      \begin{align*}
        (\hat
        G_N(\theta))_j&=\frac{1}{N-L+1}\sum_{i=L}^N\frac{\partial}{\partial
          \theta_j}\left(\nns \varphi(t_i)^2-\mathbf{V}(\theta)\right)\\
&=\frac{1}{N-L+1}\sum_{i=L}^N-\frac{\partial}{\partial \theta_j}\mathbf{V}(\theta)\\
&=\frac{1}{N-L+1}\sum_{i=L}^N\frac{\partial}{\partial \theta_j}\left(\mathbf{V}(\theta_0)-\mathbf{V}(\theta)\right)=-\frac{\partial}{\partial \theta_j}\mathbf{V}(\theta)\\
&=E\left[\frac{\partial}{\partial \theta_j} \hat{\mathbf{g}}_N(\theta)\right]\\
&=(G(\theta))_j.
      \end{align*}
      and the continuity is inherited from the continuous differentiability of $\rho_\theta(\cdot)$.
    \item[(ii)]: Using (i), we have:
      \begin{align*}
        G(\theta)=-B\nabla_\theta \nns{\rho}_\theta(\alpha)
      \end{align*}
      and this matrix is a full rank-matrix due to the remark after Assumption \ref{sec:ap1}
      and the fact that $B$ is full-rank too. Since $D$ is positive definite, the result holds.
  \end{itemize}

\qed

\subsection*{Proof of Lemma \ref{sec:asymptotic-normality}}
First note that by rescaling the vector $\hat {\nn g}_N(\theta_0)$ with the variance of each component of the vector $\nn g(t,\theta_0)$, we can write the sample moment equations in the following way:
\begin{align}\label{hermite2}
  \sqrt{N}V_D(\theta_0)^{-1}\hat {\nn g}_N(\theta_0)&=\frac{\sqrt{N}}{N-L+1}\sum_{i=L}^N\left(\frac{\varphi_\ell(t_i)^2-V_\ell(\theta_0)}{V_\ell(\theta_0)}\right)_{\ell\in \{1,\ldots,L\}}\notag \\ 
  &=\frac{\sqrt{N}}{N-L+1}\sum_{i=L}^N\left(Z_{\ell,t_i}^2-1\right)_{\ell\in \{1,\ldots,L\}}\notag \\ 
  &=\frac{\sqrt{N}}{N-L+1}\sum_{i=L}^N\left(H_2(Z_{\ell,t_i})\right)_{\ell\in \{1,\ldots,L\}}
\end{align}
where $V_D(\theta_0):=\text{Diag}\left(V_\ell(\theta_0)\right)_{\ell\in \{1,\ldots,L\}}$, $H_2(x)=x^2-1$ is the second Hermite polynomial (see \cite{NIST18}) and 
\begin{align*}
  Z_{\ell,t_i}:=\frac{\varphi_\ell(t_i)}{\sqrt{V_\ell(\theta_0)}}.
\end{align*}
So the main idea of the proof is to use the vector-valued version of the
Breuer-Major theorem with spectral-information conditions (see Theorem 3.1,
\cite{Bierme2011}) to study the asymptotic behavior of \eqref{hermite2}. In
order to achieve this, we need to analyze the spectral behavior of
$Z_{\ell,t_i}$. First recall that for $n,m \in \{0,\ldots,N\}$ we have (see
equation \eqref{eq:10}):
\begin{align*}
  \text{Cov}(X_{t_m},X_{t_n})=\frac{1}{2\pi}\int_{-\pi}^\pi e^{i|m-n|u}\bar{f}_{\theta_0}\left(\frac u \alpha\right)du.
\end{align*}
where $\bar{f}_{\theta_0}(\cdot)$ is the spectral density of the discretized
version of $X_t$ at $t \in \{t_0,\ldots,t_N\}$ (see equation
(\ref{eq:7})). Now, choose $\ell,\ell' \in \{1,\ldots,L\}$ and note that the
covariance among $Z_{\ell,t_m}$ and $Z_{\ell',t_n}$ is:  
\footnote{Without loss of generality assume that $n,m>L$.} 
\begin{align*}
\beta(m-n| \ell,\ell'):&=\text{Cov}(Z_{\ell,t_m},Z_{\ell',t_n})\\
&=\frac{1}{\sqrt{V_\ell(\theta_0)V_{\ell'}(\theta_0)}}\sum_{k,k'=0}^La_k^\ell
a_{k'}^{\ell'} \text{Cov}(X_{t_m-\alpha k},X_{t_n-\alpha k'})\\
  &=\frac{1}{\sqrt{V_\ell(\theta_0)V_{\ell'}(\theta_0)}} \sum_{k,k'=0}^L a_k^\ell
a_{k'}^{\ell'} \cdot \frac{1}{2\pi}\int_{-\pi}^\pi e^{i[(m-n)-(k-k')]u}\bar{f}_{\theta_0}(u/\alpha)du\\
  &=\int_{-\pi}^\pi e^{i(m-n)u}|P_{\nn a_\ell}(e^{-iu})|\cdot
  \overline{|P_{\nn a_{\ell'}}(e^{-iu})|}\frac{\bar{f}_{\theta_0}(u/\alpha)}{\sqrt{V_\ell(\theta_0)V_{\ell'}(\theta_0)}}du\\
  &=\int_{-\pi}^\pi e^{i(m-n)u}\bar h_{\theta_0,\alpha}(u|\ell,\ell')du,
\end{align*}
where $P_{\nn a_\ell}(x):=\sum_{k=0}^La_k^\ell x^k$ and $\bar
h_{\theta_0,\alpha}(u|\ell,\ell'):=|P_{a_\ell}(e^{-iu})|\cdot \overline{|P_{a_{\ell'}}(e^{-iu})|}\frac{\bar{f}_{\theta_0}(u/\alpha)}{\sqrt{V_\ell(\theta_0)V_{\ell'}(\theta_0)}}$. 

If we denote $\bar h_{\theta_0,\alpha}(\cdot):=\left(\bar
  h_{\theta_0,\alpha}(u|\ell,\ell')\right)_{\ell,\ell' \in \{1,\ldots,L\}}$
the symmetric matrix of spectral densities among different filters, it
is easy to conclude that this matrix does not depend on the sample size $N$,
because $\alpha$ is fixed. Therefore, in order to apply Theorem 3.1 of
\cite{Bierme2011}, we only need to check that $\bar h_{\theta_0,\alpha}(\cdot)
\in L^2((-\pi,\pi))$ uniformly. But this is true due to the fact that if
$l_\ell$ and $l_{\ell'}$ are the corresponding filter orders for $\ell$ and
$\ell'$ respectively, the  polynomial $P_{\nn
  a_\ell}(x)$ has $l_\ell$ vanishing moments and:
\begin{align*}
  \|\bar
  h_{\theta_0,\alpha}(u|\ell,\ell')\|^2_{L^2((-\pi,\pi))}&=\int_{-\pi}^\pi
  |P_{\nn a_\ell}(e^{-iu})|^2\cdot \overline{|P_{\nn a_{\ell'}}(e^{-iu})|}^2\frac{\bar{f}_{\theta_0}(u/\alpha)^2}{V_\ell(\theta_0)V_{\ell'}(\theta_0)}du\\
&\leq \frac{1}{V_\ell(\theta_0)V_{\ell'}(\theta_0) }\int_{-\pi}^\pi \min\left(1,|u|^{2l_{\ell}+2{l_{\ell'}}}\right)\bar{f}_{\theta_0}(u/\alpha)^2du<\infty
\end{align*}
by Assumption \ref{sec:ap4}. Then:
\begin{align*}
  \sqrt{N}V_D(\theta_0)^{-1}\hat {\nn g}_N(\theta_0)\stackrel{d}{\rightarrow}N(0,U)
\end{align*}
where, by Plancherel theorem: $U_{ij}=2\sum_{k \in \mathbb Z}\beta(k| i,j)^2=2\|\bar h_{\theta_0,\alpha}(u|i,j)\|^2_{L^2((-\pi,\pi))}$ and finally the lemma holds with:
\begin{align*}
  \Omega_{ij}&=V_i(\theta_0)U_{ij}V_j(\theta_0)=2\sum_{k\in \mathbb Z}E[\varphi_i(t_{n+k})\varphi_j(t_n)]^2\notag \\
  &=2\sum_{k\in \mathbb Z}\left[\sum_{q,q'=0}^La_q^ia_{q'}^jE\left[X_{\alpha(n+k)-\alpha q}X_{\alpha n-\alpha q'}\right]\right]^2\\
  &=2\sum_{k\in \mathbb Z}\left[\sum_{p=0}^Lb_p^{i,j}\rho_{\theta_0}\left[\alpha(k+p)\right]\right]^2
\end{align*}
for $i,j=1\ldots,L$.

\qed

\subsection*{Proof of Lemma \ref{sec:LemacotaEPsi}}
  For any matrix norm $\|\cdot \|$ and $N\in \mathbb N$, we have that:
  \begin{align*}
    \|\psi_N\|=\|(G'(\theta_N)A G'(\theta_N))^{-1}\|\cdot \|G'\|_{op}\cdot \|A\|,
  \end{align*}
where $\|\cdot \|_{op}$ is the operator norm. Because of the remark after Assumption \ref{sec:ap1} and Lemma \ref{asymnorm1}, the operator norm in the above equation is bounded for all $N$. Also note that if $F_N:=G'(\theta_N)A G(\theta_N)$ and if $\lambda_{\text{max}}(A)$ and $\lambda_{\text{min}}(A)$ are the maximum and minimum eigenvalue of $A$:
\begin{align*}
  \|F_N^{-1}\|\leq p \cdot \lambda_{\text{min}}(F_N)^{-1}&=p \cdot \left[\inf_{\|x\|=1}x'G'(\theta_N)A G(\theta_N)x\right]^{-1} \\
&\leq p \left[\inf_{\|u\|\geq 1}u'A u\right]^{-1}=p \cdot \lambda_{\text{min}}(A)^{-1},
\end{align*}
where $u=G(\theta_N)x$. Moreover, 
\begin{align*}
  \|A\|\leq (L+1)\cdot \lambda_{\text{max}}(A)
\end{align*}
and hence there exists $R_{s,L}>0$ such that $\|\psi_N\|^{4s}<R_{s,L}$ uniformly for all $N$. The lemma holds.

\qed

\subsection*{Proof of Lemma \ref{sec:asymptotic-normality-2}}
    Take $l,l' \in \{l_0,\ldots,l_L\}$, hence:
    \begin{align}\label{ROUcase}
      \int_{-\pi}^\pi R(u| l,l')^2du&=\int_{-\pi}^\pi\min\left(1,|u|^{2(l+l')}\right)\left[c_H\sum_{p\in \mathbb Z}\frac{\left|\frac{u+2\pi p}{\alpha}\right|^{1-2H}}{\left(\frac{u+2\pi p}{\alpha}\right)^2+\lambda^2}\right]^2du\notag \\
      &=\alpha^{4H+2}c_H^2\int_{1\leq |u|\leq \pi}\left(\sum_{p\in \mathbb Z}\frac{|u+2\pi p|^{1-2H}}{(u+2\pi p)^2+(\lambda \alpha)^2}\right)^2du+\notag \\& \quad \alpha^{4H+2}c_H^2\int_{|u|\leq 1}|u|^{2(l+l')}\left(\sum_{p\in \mathbb Z}\frac{|u+2\pi p|^{1-2H}}{(u+2\pi p)^2+(\lambda \alpha)^2}\right)^2du.
  \end{align}
  First note that for any $0<H<1$ and $p\neq 0$:
  \begin{align*}
    \sum_{p\neq 0, p\in \mathbb Z}\frac{|u+2\pi p|^{1-2H}}{(u+2\pi p)^2+(\lambda \alpha)^2}\leq 2\sum_{p=1}^\infty(u+2\pi p)^{-1-2H}<\infty
  \end{align*}
  and this implies that the two summands in equation (\ref{ROUcase}) are
  finite for all $p$ if $\pi<|u|<1$. The same applies if $|u|<1$ and $p\neq
  0$. If $p=0$ and $|u|<1$ then the situation is slightly more
  complicated. Note that:
    \begin{align*}
      \int_{|u|<1}|u|^{2(l'+l)}\frac{|u|^{2-4H}}{[u^2+(\lambda \alpha)^2]^2}du\leq \frac{2}{(\lambda \alpha)^4}\int_0^1u^{2(l'+l)+2-4H}du<\infty
    \end{align*}
if and only if $2(l+l')>4H-3$. Then the lemma holds.

\qed

\begin{lema}\label{lemacovfOU}
Assume that $X_t$ is the stationary solution of the Ornstein-Uhlenbeck
SDE. Then, for $t\geq 0$, the autocovariance function $\rho_\theta(t)$ of
$\theta=(H,\lambda,\sigma)$ is\footnote{Note that $F_H(\cdot)$ is the cdf of a $\Gamma(2H-1,1)$ random variable when $H\geq 1/2$.}:
\begin{align}\label{covmedio}
\rho_\theta(t)=e^{-\lambda t}\text{Var}(X_0)\left[\frac{1+e^{2\lambda
      t}}{2}-\lambda B_{\theta}(t)\right]
\end{align}
where:
\begin{align*}
\text{Var}(X_0)&=\sigma^2\lambda^{-2H}H\Gamma(2H),\\
B_{\theta}(t)&=\int_0^t e^{2\lambda v}F_H(\lambda v)dv\\
F_H(x)&=\frac{1}{\Gamma(2H-1)}\int_0^xe^{-s}s^{2H-2}ds.
\end{align*}
\end{lema}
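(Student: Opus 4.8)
The plan is to compute $\rho_\theta(t)$ directly from the moving-average representation \eqref{eq:4}, rather than from the spectral integral \eqref{covcompleto}. Writing $X_t=\sigma\int_{-\infty}^t e^{-\lambda(t-u)}dB^H_u$ and $X_0=\sigma\int_{-\infty}^0 e^{\lambda v}dB^H_v$, I would invoke the standard isometry for Wiener integrals against fBm with $H>\tfrac12$, namely $E[\int f\,dB^H\int g\,dB^H]=H(2H-1)\int\!\!\int f(u)g(v)|u-v|^{2H-2}\,du\,dv$, to obtain, after the substitution $a=t-u$, $b=-v$,
\begin{align*}
\rho_\theta(t)=\sigma^2H(2H-1)\int_0^\infty\!\!\int_0^\infty e^{-\lambda(a+b)}\,|t-a+b|^{2H-2}\,da\,db .
\end{align*}
The boundary case $H=\tfrac12$ reduces to the classical OU covariance $\text{Var}(X_0)e^{-\lambda t}$ and is recovered by continuity, so throughout I assume $H>\tfrac12$, which is also the regime in which $F_H$ is a genuine $\Gamma(2H-1,1)$ cdf.

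Next I would pass to the variables $s=a+b$ and $c=a-b$, whose Jacobian is $\tfrac12$ and for which the integrand depends on $c$ only through $|t-c|^{2H-2}$; integrating $c$ over $[-s,s]$ and substituting $w=t-c$ gives $\int_{t-s}^{t+s}|w|^{2H-2}\,dw$. Splitting this according to whether $s\le t$ or $s>t$ collapses the double integral to
\begin{align*}
\rho_\theta(t)=\frac{\sigma^2H}{2}\int_0^\infty e^{-\lambda s}\,G(s,t)\,ds,\qquad G(s,t)=(s+t)^{2H-1}+\operatorname{sgn}(s-t)\,|s-t|^{2H-1}.
\end{align*}
Evaluating the two resulting one-dimensional integrals in terms of the lower and upper incomplete Gamma functions $\gamma(2H,\cdot)$ and $\Gamma(2H,\cdot)$ is then routine. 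Taking $t=0$ immediately yields $\rho_\theta(0)=\sigma^2H\lambda^{-2H}\Gamma(2H)=\text{Var}(X_0)$, confirming the stated variance, while extracting the $\cosh(\lambda t)$ contribution (which is exactly $e^{-\lambda t}\,\text{Var}(X_0)\,\tfrac{1+e^{2\lambda t}}{2}$) leaves a remainder that must be identified with $-\lambda e^{-\lambda t}\,\text{Var}(X_0)\,B_\theta(t)$.

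The crux of the argument, and the step I expect to require the most care, is matching this remainder with the stated $F_H$-expression, since $F_H$ carries the shifted index $2H-1$ rather than $2H$. Setting $T=\lambda t$, non-dimensionalizing each integral, and clearing exponentials, the required identity reads
\begin{align*}
\tfrac12\Big[e^{2T}\gamma(2H,T)+\int_0^T e^{r}r^{2H-1}\,dr\Big]=\Gamma(2H)\int_0^T e^{2w}F_H(w)\,dw .
\end{align*}
Both sides vanish at $T=0$, so I would differentiate in $T$; after simplification the derivative identity reduces exactly to the incomplete-Gamma recurrence
\begin{align*}
\gamma(2H,T)+e^{-T}T^{2H-1}=(2H-1)\,\gamma(2H-1,T),
\end{align*}
which is integration by parts on $\gamma(2H,T)=\int_0^T e^{-s}s^{2H-1}\,ds$, with the boundary term at $0$ vanishing because $2H-1>0$. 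Recognizing that the entire $F_H$-bookkeeping amounts to this one recurrence is what makes the computation close, and completes the proof of \eqref{covmedio}.
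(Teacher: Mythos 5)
Your proposal is correct: every step checks out, including the crux. The coordinates $s=a+b$, $c=a-b$ do collapse the double integral to $\rho_\theta(t)=\tfrac{\sigma^2H}{2}\int_0^\infty e^{-\lambda s}G(s,t)\,ds$ with your $G$; evaluating gives $\rho_\theta(t)=\tfrac{\sigma^2H\lambda^{-2H}}{2}\bigl[(e^{\lambda t}+e^{-\lambda t})\Gamma(2H)-e^{\lambda t}\gamma(2H,\lambda t)-e^{-\lambda t}\int_0^{\lambda t}e^u u^{2H-1}du\bigr]$, whose first term is exactly the $\cosh$ piece; and your matching identity does reduce, after differentiating in $T$ and dividing by $e^{2T}$, to $\gamma(2H,T)+e^{-T}T^{2H-1}=(2H-1)\gamma(2H-1,T)$, which is integration by parts with vanishing boundary term since $2H-1>0$. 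However, your route is genuinely different from the paper's. The paper never forms the full double integral over $(0,\infty)^2$: it splits $X_t$'s moving-average integral at time $0$, factors out $e^{-\lambda t}$, and reduces the problem to the single cross term $\sigma^2E\bigl[\int_{-\infty}^0 e^{\lambda u}dB_u^H\int_0^t e^{-\lambda v}dB_v^H\bigr]$; applying the same fBm isometry there and substituting $x=v-u$, the split $\int_v^\infty=\int_0^\infty-\int_0^v$ makes the complete gamma (yielding $\text{Var}(X_0)\tfrac{e^{2\lambda t}-1}{2}$) and the incomplete-gamma cdf $F_H$ appear \emph{directly}, so the stated formula is read off with no ex-post matching identity, no differentiation, and no gamma recurrence (it quotes $\text{Var}(X_0)=\sigma^2\lambda^{-2H}H\Gamma(2H)$ from Hu and Cheridito rather than deriving it). Your approach costs that extra matching step but buys a self-contained derivation of $\text{Var}(X_0)$ at $t=0$ and a clean symmetric one-dimensional representation of $\rho_\theta$ that could be useful elsewhere (e.g.\ for asymptotics in $t$); you are also more explicit than the paper about the regime $H>\tfrac12$ required by the kernel $H(2H-1)|u-v|^{2H-2}$ and about recovering $H=\tfrac12$ by continuity, which the paper leaves implicit.
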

\begin{proof}
Note that:
\begin{align*}
\rho_\theta(t)=E[X_tX_0]&=\sigma^2E\left[\int_{-\infty}^0 e^{\lambda u}dB_u^H
  \int_{-\infty}^t e^{-\lambda(t-v)}dB_v^H\right]\\
&=e^{-\lambda t}\left[\text{Var}(X_0)+\sigma^2E\left[\int_{-\infty}^0 e^{\lambda u}dB_u^H
  \int_0^t e^{-\lambda v}dB_v^H\right]\right].
\end{align*}
Using the proof of Lemma 5.2 in \cite{Hu} and Lemma 2.1 of \cite{cheridito}: 
\begin{align}\label{eq:3}
  \text{Var}(X_0)&=\sigma^2\lambda^{-2H}H\Gamma(2H)
\end{align}
and
\begin{align*}
  &\sigma^2E\left[\int_{-\infty}^0 e^{\lambda u}dB_u^H\int_0^t e^{-\lambda
      v}dB_v^H\right]=\sigma^2 H(2H-1)\int_{-\infty}^0\int_0^te^{\lambda
    (u+v)}|u-v|^{2H-2}dvdu\\
  &=\sigma^2 H(2H-1)\int_0^t\int_v^\infty e^{2\lambda v-\lambda
    x}x^{2H-2}dxdv\\
&=\sigma^2 H\Gamma(2H)\lambda^{1-2H}\int_0^te^{2\lambda
  v}dv-\sigma^2H(2H-1)\int_0^te^{2\lambda v}\int_0^ve^{-\lambda x}x^{2H-2}dx
dv\\
&=\text{Var}(X_0)\left[\frac{e^{2\lambda t}-1}{2}-\frac{\lambda}{\Gamma(2H-1)} \int_o^te^{2\lambda
  v}\int_0^{\lambda v}e^{-s}s^{2H-2}ds dv\right]
\end{align*}
and hence:
\begin{align*}
\rho_\theta(t)=e^{-\lambda t} \text{Var}(X_0)\left[1+\frac{e^{2\lambda
      t}-1}{2}-\lambda B_\theta(t)\right]
\end{align*}
and \eqref{covmedio} holds. Note that \cite{Hu} proves formula \eqref{eq:3} in
the case $H\geq \frac 1 2$, but using the analytical continuation of the gamma
function its formula holds also for $0<H<\frac 1 2$.   
\end{proof}

\begin{lema}\label{sec:lemainjectivity}
Let $X_t$ be the fOU process and $\theta=(H,\lambda,\sigma)$. Consider the mapping:
\begin{align*}
  \nns{\rho}_{\theta,3}(\alpha)=(\rho_\theta(\alpha \cdot 0),\rho_\theta(\alpha \cdot 1),\rho_\theta(\alpha \cdot 2))'.
\end{align*}
in a closed rectangle $\Upsilon\subset \Theta$ where it holds that $\lambda<\exp(\Psi(3))$ and $\sigma$ is of known sign. Then
Assumption \ref{sec:ap1} is satisfied for $\alpha$
sufficiently small\footnote{$\Psi(x)=\frac{d}{dx}\log \Gamma (x)$: digamma
function. Note that $\exp(\Psi(3))\approx 2.52$.}.
\begin{proof}
  Following the proof of Lemma 5.2 in \cite{Hu} we have that:
  \begin{align*}
    \rho_\theta(t)\approx \sigma^2\lambda^{-2H}H\Gamma(2H)-\frac{\sigma^2t^{2H}}{2}
  \end{align*}
for $t$ sufficiently small. Define:
\begin{align*}
  h_1(\theta)&:=\rho_\theta(0)=\frac{\sigma^2}{2}\lambda^{-2H}\Gamma(2H+1)\\
  h_2(\theta)&:=\rho_\theta(\alpha)=\frac{\sigma^2}{2}\lambda^{-2H}\Gamma(2H+1)-\frac{\sigma^2}{2}\alpha^{2H}\\
h_3(\theta)&:=\rho_\theta(2\alpha)=\frac{\sigma^2}{2}\lambda^{-2H}\Gamma(2H+1)-\frac{\sigma^2}{2}(2\alpha)^{2H}.
\end{align*}
It is not hard to verify that if $\lambda<\exp(\Psi(2H+1))<\exp(\Psi(3))$ then
$\frac{\partial h_1}{\partial H}>0$. Also it is straightforward to compute
that:
\begin{align*}
  \begin{vmatrix}
    \frac{\partial h_1}{\partial H} & \frac{\partial h_1}{\partial \lambda}\\
    \frac{\partial h_2}{\partial H} & \frac{\partial h_2}{\partial \lambda}
  \end{vmatrix}=-H\sigma^4\Gamma(2H+1)\lambda^{-2H-1}\alpha^{2H}\log \alpha
\end{align*}
and the latter is strictly positive iff $\alpha<1$. Finally:
\begin{align*}
  \begin{vmatrix}
        \frac{\partial h_1}{\partial H} & \frac{\partial h_1}{\partial
          \lambda} & \frac{\partial h_1}{\partial
          \sigma}\\
    \frac{\partial h_2}{\partial H} & \frac{\partial h_2}{\partial \lambda} & \frac{\partial h_2}{\partial
          \sigma}\\
    \frac{\partial h_3}{\partial H} & \frac{\partial h_3}{\partial \lambda} & \frac{\partial h_3}{\partial
          \sigma}
  \end{vmatrix}=-H\sigma^5\Gamma(2H+1)\lambda^{-2H-1}2^{2H}\alpha^{4H}\log 2.
\end{align*}
and this expression is positive if $\text{sign}(\sigma)<0$. Using the Theorem 4 in
\cite{Gale1965} we can deduce that the mapping $\nns{\rho}_{\theta,3}(\alpha)$
is injective for $\alpha$ small. If the
$\text{sign}(\sigma)>0$ we can deduce the injectivity from the autocovariance
function $\rho_\theta$ related to the process $-X$. Also note that the mapping satisfies the
remark after 
assumption \ref{sec:ap1} as a direct consequence of the above calculations.
\end{proof}
\end{lema}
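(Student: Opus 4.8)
The statement to establish is that the fOU map $\theta=(H,\lambda,\sigma)\mapsto\nns{\rho}_{\theta,3}(\alpha)$ is differentiable and injective on the rectangle $\Upsilon$, which is exactly Assumption \ref{sec:ap1} for $p=3$. Differentiability is free: on the region $H\geq\frac12$ the autocovariance is the smooth expression \eqref{covmedio}, so every component of $\nns{\rho}_{\theta,3}(\alpha)$ is $C^1$ in $\theta$. The plan for injectivity is to invoke a Gale--Nikaido global univalence theorem (Theorem~4 of \cite{Gale1965}): a $C^1$ map on a rectangle whose Jacobian is a $P$-matrix on all of $\Upsilon$ is one-to-one. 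Because $\rho_\theta$ has no elementary closed form, the first step is to work with its small-$t$ expansion. From Lemma \ref{lemacovfOU} (equivalently Lemma~5.2 of \cite{Hu}) I would record
\begin{align*}
  \rho_\theta(t)=\sigma^2\lambda^{-2H}H\Gamma(2H)-\tfrac{\sigma^2}{2}t^{2H}+o(t^{2H}),\qquad t\to0,
\end{align*}
and introduce the leading-order surrogate $h=(h_1,h_2,h_3)$ obtained by dropping the remainder at $t=0,\alpha,2\alpha$.

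Next I would compute the three nested leading principal minors of $\partial(h_1,h_2,h_3)/\partial(H,\lambda,\sigma)$ and read off the hypotheses of the lemma from their signs. The top-left $1\times1$ minor is
\begin{align*}
  \frac{\partial h_1}{\partial H}=\sigma^2\lambda^{-2H}\Gamma(2H+1)\bigl(\Psi(2H+1)-\log\lambda\bigr),
\end{align*}
positive precisely when $\lambda<\exp(\Psi(2H+1))$, which is why the ceiling $\lambda<\exp(\Psi(3))$ is imposed. The $2\times2$ minor in $(H,\lambda)$ reduces to a manifestly positive factor times $-\log\alpha$, so it is positive iff $\alpha<1$; and the full $3\times3$ determinant reduces to a positive factor times $-\sigma^{5}$, so it is positive iff $\sigma<0$. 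Thus the three structural hypotheses --- the bound on $\lambda$, the smallness $\alpha<1$, and the prescribed sign of $\sigma$ --- are exactly what force the surrogate Jacobian to have positive principal minors. The case $\sigma>0$ I would reduce to $\sigma<0$ by observing that $X$ and $-X$ have identical autocovariance, so injectivity transfers. A useful sanity check is that $h$ is in fact explicitly invertible: the ratio of the differences $h_1-h_3$ and $h_1-h_2$ recovers $2^{2H}$ hence $H$, then $h_1-h_2$ recovers $\sigma$, and finally $h_1$ recovers $\lambda$; this confirms that the minor computation is detecting genuine global univalence and not a local artifact. The same nonvanishing of the $3\times3$ determinant simultaneously discharges the nonsingular-gradient condition in the remark after Assumption \ref{sec:ap1}, which is needed downstream in Lemma \ref{sec:LemacotaEPsi}.

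Finally, Gale--Nikaido applied to $h$ gives injectivity of the surrogate, and I would transfer this to the true map $\nns{\rho}_{\theta,3}(\alpha)$ by a perturbation argument valid for $\alpha$ small. The hard part is precisely this transfer: one must control the remainder $o(t^{2H})$ together with its $\theta$-derivatives, uniformly over the compact rectangle $\Upsilon$, so that the Jacobian of the genuine map remains a $P$-matrix once $\alpha$ is small enough. Since the three surrogate minors are homogeneous in powers of $\alpha$ and bounded away from zero after the natural rescaling, I expect the dominant terms to survive the perturbation, but making this uniform --- and reconciling the $\lambda$-bound, which really reads $\lambda<\exp(\Psi(2H+1))$, with the full range of $H$ allowed in $\Upsilon$ --- is where the genuine work lies; everything else is routine differentiation of \eqref{covmedio}.
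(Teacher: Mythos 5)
Your proposal follows essentially the same route as the paper's own proof: the same small-$t$ surrogate $h=(h_1,h_2,h_3)$ built from the expansion in Lemma~5.2 of \cite{Hu}, the same three nested leading principal minors with the same sign conditions ($\lambda<\exp(\Psi(2H+1))$, $\alpha<1$, $\sigma<0$), the same appeal to Theorem~4 of \cite{Gale1965}, the same reduction of the case $\sigma>0$ to $\sigma<0$ via the process $-X$, and the same observation that the nonvanishing $3\times3$ determinant discharges the remark after Assumption~\ref{sec:ap1}. The only departures are additions on your part --- the explicit-inversion sanity check (which recovers $\sigma^2$, with the known sign then giving $\sigma$) and your honest flagging of the surrogate-to-true-map perturbation and of the $H$-dependence of the $\lambda$-bound --- loose ends the paper itself leaves implicit under its ``for $\alpha$ sufficiently small'' and (in Section~\ref{sec:fOUconsistency}) ``verified locally'' qualifiers.
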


\section{Behavior of $(\hat {\nn g}_N(\theta))_0$ when $H\geq 3/4$.}
\label{sec:behavior-g_0-when}

The last result we present and prove in this paper, Proposition
\ref{sec:fOUgbehavior} below, shows that if one does not perform any filtering
of the observations of an fOU process, but one attempts to construct a
Method-of-Moments estimator based dierctly its the quadratic variations, for
the purpose of estimating its variance, for instance, then the estimator
cannot be asymptotically normal when $H>3/4$, or even asymptotically of
second-chaos type in law; this is the first item in the next
proposition. However, the second item in this proposition shows that the
estimator is strongly consistent nonetheless, with an $L^2$-rate of
convergence of order $N^{2H-2}$ (which is slower than $N^{-1/2}$). The
standard Borel-Cantelli argument (such as the one employed to prove Lemma \ref{sec:LemacotaEPsi}) then implies that the estimator is strongly consistent with rate $N^{-\gamma}$ for any $\gamma<2-2H$. The proof of Proposition \ref{sec:fOUgbehavior} requires elements of the Malliavin calculus, which are also given below.

\begin{prop}\label{sec:fOUgbehavior}
  Assume that $X_t$ is a stationary fOU process with parameters $\theta=(H,\lambda,\sigma)$ and $H\geq \frac 3 4$. Then:
  \begin{itemize}
  \item[(i)] If $H=\frac 3 4$:
    \begin{align*}
      \sqrt{\frac{N}{\log N}}(\hat {\nn g}_N(\theta))_0\stackrel{d}{\longrightarrow}N(0,2\alpha^{-1}c_\theta^2)
    \end{align*}
    where $c_\theta=\sigma^2\frac{H(2H-1)}{\lambda}=\frac{3\sigma^2}{8\lambda}$.
  \item[(ii)] If $H>\frac 3 4$, $(\hat {\nn g}_N(\theta))_0$ does not converge to a normal law, or even a second-chaos law. However, 
    \begin{align*}
      E\left[|N^{2-2H}(\hat {\nn g}_N(\theta))_0|^2\right]=O(1).
    \end{align*}
  \end{itemize}
  \begin{proof}
    First note that since $SA_{i,0}=A_i$ for all $i
    \in\{0,\ldots,N\}$ (see equations
    \eqref{eq:9} and \eqref{eqphiell}), the variance of $\hat g_N(\theta):=(\hat {\nn g}_N(\theta))_0$ is:
    \begin{align*}
      E[|\hat g_N(\theta)|^2]&=\frac{2}{N^2}\sum_{i,j=1}^N\langle A_i\otimes A_i,A_j\otimes A_j\rangle_{\Hcal}\\
      &=\frac{2}{N^2}\sum_{i,j=1}^N\langle A_i,A_j\rangle_{\Hcal}^2=\frac{2}{N^2}\sum_{i,j=1}^N[\rho_\theta(|t_i-t_j|)]^2\\
      &=\frac{2\rho_\theta(0)^2}{N}+\frac{2}{N^2}\sum_{\substack{i,j=1 \\ i\neq j}}^N[\rho_{\theta_0}(|t_i-t_j|)]^2\\
      &=\frac 2 N\underbrace{\left[\rho_\theta(0)^2+\rho_\theta(1)^2\right]}_{\beta_\theta}+\frac{2}{N^2}\sum_{\substack{i,j=1 \\ |i-j|\geq 2}}^N[\rho_{\theta_0}(|t_i-t_j|)]^2
    \end{align*} 
    where
    $\Hcal:=L^2\left[\left(-\frac{\pi}{\alpha},\frac{\pi}{\alpha}\right)\right]$
    and its inner product is given by:
\begin{align*}
\langle A_j,A_{j'}\rangle_\Hcal=\rho_\theta(|t_j-t_{j'}|)=E[X_{t_j}X_{t_{j'}}]=E[I_1(A_j(\cdot))I_1(A_{j'}(\cdot))].
\end{align*}
Using the expansion (\ref{expancorr}) we have:
    \begin{align}\label{eq:2}
      &E[|\hat g_N(\theta)|^2]=\frac{2\beta_\theta}{N}+\frac{2}{N^2}\sum_{\substack{i,j=1\\ |i-j|\geq 2}}^N\left[c_\theta^2|t_i-t_j|^{4H-4}+O(|t_i-t_j|^{4H-6})\right]\\
      &=\frac{2\beta_\theta}{N}+\frac 2 N \left[\alpha^{4H-4}c_\theta^2\sum_{k=2}^N\left(\frac{N-k}{N}\right)k^{4H-4}+\alpha^{4H-6}\sum_{k=2}^N\left(\frac{N-k}{N}\right)O(k^{4H-6})\right]\notag
    \end{align}
    where $c_\theta:=\sigma^2\frac{H(2H-1)}{\lambda}$. Note that for $H=\frac 3 4$, as $N\rightarrow \infty$:
    \begin{align*}
      E\left[\frac{N}{\log N}|\hat g_N(\theta)|^2\right]\longrightarrow 2\alpha^{-1}c_\theta^2
    \end{align*}
    We can write (\ref{eq:2}) when $H>\frac 3 4$ as:
    \begin{align*}
      &E[|\hat g_N(\theta)|^2]=\frac{2\beta_\theta}{N}+\frac{2}{N^2}\sum_{\substack{i,j=1\\ |i-j|\geq 2}}^N\left[c_\theta^2|\alpha(i-j)|^{4H-4}+O(|\alpha(i-j)|^{4H-6})\right]\\
      &=\frac{2\beta_\theta}{N}+2\alpha^{4H-4}c_\theta^2N^{4H-4}\sum_{\substack{i,j=1\\ |i-j|\geq 2}}^N\frac{|i-j|^{4H-4}}{N^{4H-4}}\frac{1}{N^2}+\frac 2 N \alpha^{4H-6}\sum_{k=2}^N\left(\frac{N-k}{N}\right)O(k^{4H-6}) 
    \end{align*}
    therefore:
    \begin{align*}
      E[|N^{2-2H}\hat g_N(\theta)|^2]\longrightarrow 2\alpha^{4H-4}c_\theta^2 \iint_{[0,1]^2}|x-y|^{4H-4}dxdy=\frac{2\alpha^{4H-4}c_\theta^2}{(2H-1)(4H-3)} 
    \end{align*}
    Define:
    \begin{align*}
      \tilde D_N(\theta):=
      \begin{cases}
        \sqrt{\frac{N}{\tilde c_1\log N}}& \text{if $H=\frac 3 4$}\\
        \sqrt{\frac{N^{4-4H}}{\tilde c_2}}& \text{if $H>\frac 3 4$}
      \end{cases}
    \end{align*}
    where $\tilde c_1=2\alpha^{-1}c_\theta^2$ and $\tilde c_2=\frac{2\alpha^{4H-4}c_\theta^2}{(2H-1)(4H-3)}$ and denote $F_N:=\tilde D_N(\theta)\hat g_N(\theta)$. For both cases we have that $E[|F_N|^2]\rightarrow 1$ and hence the main assumption of Theorem 4 in \cite{nualartortiz} is satisfied. Now we only need to prove that:
    \begin{align*}
      \|DF_N\|^2_{\Hcal}\stackrel{L^2(\Omega)}{\longrightarrow}2
    \end{align*}
    in order to prove the asymptotic convergence of $F_N$ towards a normal law. We can compute the Malliavin derivative of $\hat g_N(\theta)$ as follows:
    \begin{align*}
      D_r\hat g_N(\theta)=\frac 1 N\sum_{i=1}^N2X_{t_i}D_rX_{t_i}=\frac{1}{N}\sum_{i=1}^NI_1(A_i(\cdot))A_i(r)
    \end{align*}
    and hence it turns out that:
    \begin{align*}
      E[\|D\hat g_N(\theta)\|^2_{\Hcal}]&=\frac{4}{N^2}\sum_{i,j=1}^NE[I_1(A_i(\cdot))I_1(A_j(\cdot))]\langle A_i,A_j\rangle_{\Hcal}\\
      &=\frac{4}{N^2}\sum_{i,j=1}^N\langle A_i,A_j\rangle_{\Hcal}^2=2E[\|\hat g_N(\theta)\|_{\Hcal}^2]
    \end{align*}
    and this implies that:
    \begin{align*}
      E[\|DF_N\|_{\Hcal}^2]=2E[\|F_N\|_{\Hcal}^2]\longrightarrow 2.
    \end{align*}
    Therefore, we only need to prove that:
    \begin{align*}
      \|DF_N\|_{\Hcal}^2-E[\|DF_N\|_{\Hcal}^2]\stackrel{L^2(\Omega)}{\longrightarrow}0
    \end{align*}
    First note that by using the multiplication rule of Weiner integrals:
    \begin{align*}
      \|DF_N\|_{\Hcal}^2&=\frac{4}{N^2}\tilde D_N(\theta)^2\sum_{i,j=1}^NI_1(A_i(\cdot)) I_1(A_j(\cdot))\langle A_i,A_j\rangle_{\Hcal}\\
      &=\frac{4}{N^2}\tilde D_N(\theta)^2\sum_{i,j=1}^N\left[I_2(A_i(\cdot)\otimes A_j(\cdot))+\langle A_i,A_j\rangle_{\Hcal}\right]\langle A_i,A_j\rangle_{\Hcal}\\
      &=\frac{4}{N^2}\tilde D_N(\theta)^2\sum_{i,j=1}^N I_2(A_i(\cdot)\otimes A_j(\cdot))+E[\|DF_N\|_\Hcal^2]
    \end{align*}
    and hence using the Lemma \ref{sec:some-auxil-results} (i) and (ii) we have that:
    \begin{multline*}
      E\left[\|DF_N\|_{\Hcal}^2-E[\|DF_N\|_{\Hcal}^2]\right]\\=\frac{16}{N^4}\tilde D_N(\theta)^4\sum_{i,j,i',j'=1}^N E[I_2(A_i(\cdot)\otimes A_j(\cdot))I_2(A_{i'}(\cdot)\otimes A_{j'}(\cdot))]\langle A_i,A_j\rangle_{\Hcal} \langle A_{i'},A_{j'}\rangle_{\Hcal}\\
      =\frac{32}{N^4}\tilde D_N(\theta)^4\sum_{i,j,i',j'=1}^N \langle A_i\tilde{\otimes} A_j,A_{i'}\tilde{\otimes} A_{j'}\rangle_{\Hcal^2}\langle A_i,A_j\rangle_{\Hcal} \langle A_{i'},A_{j'}\rangle_{\Hcal}
    \end{multline*}
    and therefore, using Lemma \ref{sec:some-auxil-results-1}, we conclude that:
    \begin{align*}
      E\left[\|DF_N\|_{\Hcal}^2-E[\|DF_N\|_{\Hcal}^2]\right]\longrightarrow 0
    \end{align*}
    only if $H=\frac 3 4$. Then for $H=\frac 3 4$ we can apply the Theorem 4 in \cite{nualartortiz} and deduce that:
    \begin{align*}
      \sqrt{\frac{N}{\tilde c_1\log N}}\hat g_N(\theta)\stackrel{d}{\rightarrow} N(0,1).
    \end{align*}

    For $H>\frac 3 4$, Lemma \ref{sec:some-auxil-results-1} allows us to conclude that the limit of 
    \begin{align*}
      \sqrt{\frac{N^{4-4H}}{\tilde c_2}}\hat g_N(\theta)
    \end{align*}
    does not belong to the first Weiner chaos, and hence the GMM estimator is not normally distributed.
Define the following function:
\begin{align}\label{eq:8}
  \mathcal I_N(r,s):=\frac{N^{1-2H}}{\sqrt{\tilde c_2}}\sum_{j=1}^N (A_j\otimes A_j)(r,s)
\end{align}
for any $(r,s) \in \mathbb R^2$. By Lemma \ref{sec:some-auxil-results-3}, this function is not a Cauchy sequence in $\Hcal^2$. Hence we cannot find a second-Wiener integral $I_2(g(\cdot,\cdot))$ such that:
\begin{align*}
  E[|F_N-I_2(g(\cdot,\cdot))|^2]=2\|\mathcal I_N(\cdot,\cdot)-g(\cdot,\cdot)\|_{\Hcal^2}^2\longrightarrow 0
\end{align*}
when $H>\frac 3 4$ (see the proof of Theorem 2 in \cite{TV2}). Then the limit in $L^2(\Omega)$ of $\hat g_N(\theta)$ is not even a second-chaos random variable.
  \end{proof}
\end{prop}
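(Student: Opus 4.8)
The plan is to exploit the fact that the order-zero filter is trivial, so that $\varphi_0(t_i)=X_{t_i}=I_1(A_i(\cdot\,|\theta))$ by \eqref{eq:9}, and hence $\hat g_N(\theta):=(\hat{\nn g}_N(\theta))_0=\frac1N\sum_{i=1}^N I_2(A_i\otimes A_i)$ lives entirely in the second Wiener chaos of $W$. This reduces every question about $\hat g_N(\theta)$ to computations with the kernels $A_i$, whose pairwise inner products are exactly the autocovariances, $\langle A_i,A_j\rangle_{\Hcal}=\rho_\theta(|t_i-t_j|)$ with $\Hcal=L^2[(-\pi/\alpha,\pi/\alpha)]$. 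Because $\hat g_N(\theta)$ sits in a single fixed chaos, both the $L^2$-rate and the limit-law dichotomy can be attacked by the Malliavin/fourth-moment machinery.

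First I would pin down the rate. By the second-chaos isometry,
\begin{align*}
E[|\hat g_N(\theta)|^2]=\frac{2}{N^2}\sum_{i,j=1}^N \rho_\theta(|t_i-t_j|)^2,
\end{align*}
and substituting the tail expansion \eqref{expancorr}, so that $\rho_\theta(x)^2\sim c_\theta^2\,x^{4H-4}$ with $c_\theta=\sigma^2H(2H-1)/\lambda$, I would reduce the double sum to $\frac1N\sum_{k}(1-k/N)k^{4H-4}$, the diagonal and near-diagonal terms contributing only at the subdominant order $1/N$. The exponent $4H-4$ equals $-1$ exactly when $H=\tfrac34$, producing the logarithmic divergence and hence the normalization $\sqrt{N/\log N}$ with limiting variance $\tilde c_1=2\alpha^{-1}c_\theta^2$; for $H>\tfrac34$ the exponent exceeds $-1$, the sum is a Riemann sum for $\iint_{[0,1]^2}|x-y|^{4H-4}\,dx\,dy=\frac{1}{(2H-1)(4H-3)}$, the correct scale is $N^{2-2H}$, and this already gives the $O(1)$ bound asserted in (ii) together with the constant $\tilde c_2$.

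For the normal convergence in (i) I would apply the Nualart--Peccati--Ortiz fourth-moment criterion (Theorem 4 in \cite{nualartortiz}) to $F_N:=\tilde D_N(\theta)\hat g_N(\theta)$, normalized so that $E[F_N^2]\to1$. It then suffices to show $\|DF_N\|_{\Hcal}^2\to2$ in $L^2(\Omega)$. Computing the Malliavin derivative and applying the multiplication formula gives
\begin{align*}
\|DF_N\|_{\Hcal}^2=\frac{4}{N^2}\tilde D_N^2\sum_{i,j=1}^N I_2(A_i\otimes A_j)\langle A_i,A_j\rangle_{\Hcal}+E[\|DF_N\|_{\Hcal}^2],
\end{align*}
where the mean term converges to $2$ by the variance computation above. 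The crux is that the centered part has vanishing $L^2$-norm, i.e.\ that
\begin{align*}
\frac{1}{N^4}\tilde D_N^4\sum_{i,j,i',j'=1}^N \langle A_i\,\tilde{\otimes}\,A_j,\,A_{i'}\,\tilde{\otimes}\,A_{j'}\rangle_{\Hcal^{\otimes2}}\langle A_i,A_j\rangle_{\Hcal}\langle A_{i'},A_{j'}\rangle_{\Hcal}\longrightarrow0.
\end{align*}
Expanding the symmetrized inner product into products of four autocovariances and invoking \eqref{expancorr} reduces this to a fourfold sum of power-law weights of the type $\sum|i-j|^{2H-2}|i'-j'|^{2H-2}|i-i'|^{2H-2}|j-j'|^{2H-2}$ and its index permutations.

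The main obstacle is precisely this fourth-order estimate: tracking the exact growth rate of that quadruple convolution and matching it against the prefactor $N^{-4}\tilde D_N^4$ is the delicate step, and it is exactly here that the threshold $H=\tfrac34$ surfaces. I expect the bound to tend to $0$ when $H=\tfrac34$, closing part (i), and to remain bounded away from $0$ when $H>\tfrac34$, which by the fourth-moment theorem rules out any normal limit. To upgrade this to the stronger claim that the limit is not even a second-chaos variable, I would argue as in the proof of Theorem 2 in \cite{TV2}: if some $I_2(g)$ were the $L^2(\Omega)$-limit of $F_N$, then $E[|F_N-I_2(g)|^2]=2\|\mathcal I_N-g\|_{\Hcal^{\otimes2}}^2\to0$, where $\mathcal I_N(r,s)=\frac{N^{1-2H}}{\sqrt{\tilde c_2}}\sum_{j=1}^N(A_j\otimes A_j)(r,s)$, so the kernels $\mathcal I_N$ would have to converge in $\Hcal^{\otimes2}$; it therefore suffices to exhibit that $\{\mathcal I_N\}$ fails to be Cauchy, via a direct estimate of $\|\mathcal I_N-\mathcal I_M\|_{\Hcal^{\otimes2}}^2$ using the same power-law bookkeeping. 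Finally, the $L^2$-rate $N^{2H-2}$ from (ii) feeds a Borel--Cantelli argument identical in structure to the one used for Lemma \ref{sec:LemacotaEPsi}, yielding strong consistency of the unfiltered estimator with rate $N^{-\gamma}$ for every $\gamma<2-2H$.
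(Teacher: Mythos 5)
Your proposal follows essentially the same route as the paper's proof: the second-chaos representation $\hat g_N(\theta)=\frac1N\sum_i I_2(A_i\otimes A_i)$, the variance asymptotics via the tail expansion \eqref{expancorr} with the dichotomy at $H=\tfrac34$ (yielding $\tilde c_1$ and $\tilde c_2$ exactly as in the paper), the Nualart--Ortiz fourth-moment criterion applied through $\|DF_N\|_{\Hcal}^2$ and the quadruple-sum estimate (the paper's Lemma \ref{sec:some-auxil-results-1}), and the kernel-non-convergence argument from the proof of Theorem 2 in \cite{TV2} for the non-second-chaos claim. The only point of divergence is cosmetic: you propose to show $\{\mathcal I_N\}$ fails to be Cauchy by directly estimating $\|\mathcal I_N-\mathcal I_M\|_{\Hcal^2}^2$, whereas the paper's Lemma \ref{sec:some-auxil-results-3} argues by contradiction (a Cauchy assumption plus the vanishing pointwise limit would force $\|\mathcal I_N\|_{\Hcal^2}\to0$, contradicting $E[|F_N|^2]\to1$), but either implementation serves the same purpose.
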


\begin{lema}\label{sec:some-auxil-results}
  If $A_k(u):=A_k(u|\theta)$ is defined as (\ref{eq:9}) for $u>0$, then:
  \begin{enumerate}
  \item[(i)] The tensor product $(A_k\otimes A_j)(u,v)$ is symmetric if and
    only if $k=j$.
  \item[(ii)] $$\langle A_{i_1}\otimes \cdots \otimes A_{i_n},A_{j_1}\otimes \cdots \otimes A_{j_n}\rangle_{\Hcal^n}=\prod_{k=1}^n\langle A_{i_k},A_{j_k}\rangle_{\Hcal}$$ for any $H\in (0,1)$.
  \end{enumerate}
\end{lema}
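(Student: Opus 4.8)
The plan is to argue directly from the explicit kernels $A_k(u)=e^{it_ku}\bar f_{\theta_0}(u)^{1/2}$ on $\Hcal=L^2[(-\pi/\alpha,\pi/\alpha)]$, exploiting two structural facts: that $\bar f_{\theta_0}$ is a genuine spectral density, hence strictly positive almost everywhere and integrable, so that each $A_k$ lies in $\Hcal$ for every $H\in(0,1)$; and that the sampling times $t_k=k\alpha$ are distinct for distinct indices. Part (i) is a statement about the tensor function itself, whereas part (ii) is the factorization of the product inner product, so I would treat them separately.

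For part (i), I would write the symmetry requirement $(A_k\otimes A_j)(u,v)=(A_k\otimes A_j)(v,u)$ as the identity $A_k(u)A_j(v)=A_k(v)A_j(u)$, which, after inserting the explicit kernels and cancelling the (real, positive) moduli $\bar f_{\theta_0}(u)^{1/2}\bar f_{\theta_0}(v)^{1/2}$, reduces to $e^{i(t_k-t_j)(u-v)}=1$ on the full-measure set where $\bar f_{\theta_0}(u)\bar f_{\theta_0}(v)\neq 0$. The direction $k=j\Rightarrow$ symmetric is immediate. For the converse I would use that $t_k-t_j=(k-j)\alpha$, so if $k\neq j$ the phase $(t_k-t_j)(u-v)$ sweeps continuously through a nondegenerate interval as $(u,v)$ varies and fails to be a multiple of $2\pi$ except on a set of lines of measure zero, contradicting the almost-everywhere identity; hence $t_k=t_j$ and, since $\alpha>0$, $k=j$.

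For part (ii), I would identify $\Hcal^{\otimes n}$ with $L^2$ of the $n$-fold product domain and expand the inner product as the iterated integral $\int\cdots\int\prod_{k=1}^nA_{i_k}(u_k)\overline{A_{j_k}(u_k)}\,du_1\cdots du_n$. Since each factor lies in $\Hcal$, Fubini's theorem applies and the integral separates into $\prod_{k=1}^n\langle A_{i_k},A_{j_k}\rangle_\Hcal$; the integrability required is automatic because $\int\bar f_{\theta_0}=\rho_\theta(0)<\infty$ for all $H\in(0,1)$, which is precisely why the identity holds uniformly in $H$. The only step needing genuine care is the converse in part (i): translating the pointwise $L^2$ identity into the arithmetic conclusion $t_k=t_j$, for which positivity of $\bar f_{\theta_0}$ (to divide out the modulus factors) and the nondegeneracy of the linear phase are the essential ingredients. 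Part (ii) is otherwise a routine application of Fubini together with the definition of the tensor-product inner product.
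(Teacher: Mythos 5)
Your proof is correct and follows essentially the same route as the paper: in part (i) you cancel the positive spectral-density factors and reduce symmetry to the phase identity $e^{i(t_k-t_j)(u-v)}=1$, and in part (ii) you factor the $n$-fold $L^2$ inner product via Fubini, exactly as the paper does. Your only additions are welcome bits of rigor the paper leaves implicit, namely the measure-theoretic justification of the converse in (i) (which the paper dismisses as trivial) and the explicit integrability check licensing Fubini in (ii).
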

\begin{proof}
  \begin{enumerate}
  \item[(i)] It is trivial to deduce that for $u,v>0$:
% The case $i=j$ is trivial. If $i\neq j$ and $u,v\geq 0$:
  \begin{align*}
    (A_k\otimes A_j)(u,v)&=\exp(it_ku)\bar f_{\theta,\alpha}(u)^{1/2}\exp(it_jv)\bar f_{\theta,\alpha}(v)^{1/2}\\
    &=\exp(it_kv)\bar f_{\theta,\alpha}(v)^{1/2}\exp(it_ju)\bar f_{\theta,\alpha}(u)^{1/2}=(A_i\otimes A_j)(v,u)
  \end{align*}
if and only if $\exp(it_k(u-v))=\exp(it_j(u-v))$, which is equivalent to have
that $k=j$.
\item[(ii)] 
  \begin{align*}
    &\langle A_{i_1}\otimes \cdots \otimes A_{i_n},A_{j_1}\otimes \cdots \otimes A_{j_n}\rangle_{\Hcal^n}=\\
&=\int_{(-\pi/\alpha,\pi/\alpha)^n} A_{i_1}(s_1)\cdots A_{i_n}(s_n)\overline{A_{j_1}(s_1)\cdots A_{j_n}(s_n)}ds_1\cdots ds_n\\
&=\left[\int_{-\pi/\alpha}^{\pi/\alpha} A_{i_1}(s_1)\bar A_{j_1}(s_1)ds_1\right]\cdots \left[\int_{-\pi/\alpha}^{\pi/\alpha} A_{i_n}(s_n)\bar A_{j_n}(s_n)ds_n\right]\\
&=\prod_{k=1}^n\langle A_{i_k},A_{j_k}\rangle_{\Hcal}
  \end{align*}
  \end{enumerate}
\end{proof}
\begin{lema}\label{sec:some-auxil-results-1}
  Suppose that $A_k$ is defined as (\ref{eq:9}). Define for $H\geq \frac 3 4$: $$\mathcal P_N:=\frac{\tilde D_N(\theta)^4}{N^4}\sum_{i,j,i',j'=1}^N \langle A_i\tilde{\otimes} A_j,A_{i'}\tilde{\otimes} A_{j'}\rangle_{\Hcal^2}\langle A_i,A_j\rangle_{\Hcal} \langle A_{i'},A_{j'}\rangle_{\Hcal}$$ hence:
  \begin{align*}
    \mathcal P_N\longrightarrow 
    \begin{cases}
      0 & \text{if $H=\frac 3 4$}\\
      k & \text{if $H>\frac 3 4$}.
    \end{cases}
  \end{align*}
  for some $k\neq 0$.
  \begin{proof}
    First note that the inner product of the symmetric tensor products can be written as:
    \begin{align*}
      \langle A_i\tilde{\otimes} A_j,A_{i'}\tilde{\otimes} A_{j'}\rangle_{\Hcal^2}&=\frac 1 4\bigl[\langle A_i\otimes A_j,A_{i'}\otimes A_{j'}\rangle_\Hcal+\langle A_i\otimes A_j,A_{j'}\otimes A_{i'}\rangle_\Hcal+\\
      &\quad \langle A_j\otimes A_i,A_{i'}\otimes A_{j'}\rangle_\Hcal+\langle A_j\otimes A_i,A_{j'}\otimes A_{i'}\rangle_\Hcal\bigl]\\
      &=\frac 1 2 \langle A_i,A_{i'}\rangle_\Hcal\langle A_j,A_{j'}\rangle_\Hcal+\frac 1 2 \langle A_i,A_{j'}\rangle_\Hcal\langle A_j,A_{j'}\rangle_\Hcal
    \end{align*}
    then, by symmetry of the indices, we can write:
    \begin{align*}
      \mathcal P_N:=\frac{\tilde D_N(\theta)^4}{N^4}\sum_{i,j,i',j'=1}^N \langle A_i,A_{i'}\rangle_\Hcal \langle A_j,A_{j'}\rangle_{\Hcal}\langle A_i,A_j\rangle_{\Hcal} \langle A_{i'},A_{j'}\rangle_{\Hcal}
    \end{align*}
    and we can study the limiting behavior of $\mathcal P_N$ through the folllowing cases:
    \begin{description}
    \item[Case I:] $i=i'=j=j'$

      In this case we analyzed the behavior of the diagonal, which is convergent for any $H\geq \frac 3 4$ as follows:
      \begin{align*}
        \mathcal P_N=\frac{\tilde D_N(\theta)^4}{N^4}N\rho_\theta(0)^4&=
        \begin{cases}
          \frac{\rho_\theta(0)^4}{2\tilde c_1^2N\log N} & \text{if $H=\frac 3 4$}\\
          \frac{\rho_\theta(0)^4}{\tilde c_2^2N^{8H-5}} & \text{if $H>\frac 3 4$} 
        \end{cases}\\
        &\longrightarrow 0.
      \end{align*}
    \item[Case II:] $i=i'$, $j=j'$ and $i\neq j$

      In this particular case we observe:
      \begin{multline*}
        \mathcal P_N=\frac{\tilde D_N(\theta)^4}{N^4}\rho_\theta(0)^2\sum_{\substack{i,j=1\\ i\neq j}}^N\langle A_i,A_j\rangle_{\Hcal}^2=\frac{\tilde D_N(\theta)^4\rho_\theta(0)^2}{N^4}\biggl[2(N-1)\rho_\theta(1)^2\\ +\sum_{|i-j|\geq 2}c_\theta^2 \alpha^{4H-4}|i-j|^{4H-4}+c_\theta\alpha^{4H-6}N\sum_{k=2}^N\left(\frac{N-k}{N}\right)O(k^{4H-6})\biggl]
      \end{multline*}
      Note that if $H=\frac 3 4$:
      \begin{align*}
        \mathcal P_N&=\frac{\tilde D_N(\theta)^4\rho_\theta(0)^2}{N^4}\biggl[2(N-1)\rho_\theta(1)^2 +c_\theta^2 \alpha^{4H-4}\sum_{k=2}^N\left(\frac{N-k}{N}\right)k^{4H-4}+\\
&\qquad c_\theta\alpha^{4H-6}N\sum_{k=2}^N\left(\frac{N-k}{N}\right)O(k^{4H-6})\biggl]\\
&\asymp \frac{1}{N\log N}+\frac 1 N\longrightarrow 0.
      \end{align*}
      and if $H>\frac 3 4$:
      \begin{align*}
        \mathcal P_N&=\frac{\tilde D_N(\theta)^4\rho_\theta(0)^2}{N^4}\biggl[2(N-1)\rho_\theta(1)^2 +c_\theta^2 \alpha^{4H-4}\sum_{|i-j|\geq 2}\frac{|i-j|^{4H-4}}{N^{4H-4}}\frac{1}{N^2}+\\
    &\qquad c_\theta\alpha^{4H-6}N\sum_{k=2}^N\left(\frac{N-k}{N}\right)O(k^{4H-6})\biggl]\\
    &\asymp N^{5-8H}+N^{2-4H}\iint_{[0,1]^2}|x-y|^{4H-4}dxdy\longrightarrow 0
      \end{align*}
      since $\iint_{[0,1]^2}|x-y|^{4H-4}dxdy<\infty$ if $H>\frac 3 4$.
    \item[Case III:] $i=i'$, $j\neq j'$ and $i\neq j$

      Without loss of generality, we can assume that $|i-j|\geq 2$ and $|j-j'|$. If $|i-j|=1$ or $|j-j'|=1$ then it can be proved that the rates of convergence are the same as Case I and II. We can write $\mathcal P_N$ as:
      \begin{align*}
        \mathcal P_N&=\frac{\tilde D_N(\theta)^4}{N^4}\rho_\theta(0)\biggl[\overbrace{c_\theta^3\alpha^{6H-6}\sum_{i,j,j'=1}^N|i-j|^{2H-2}|i-j'|^{2H-2}|j-j'|^{2H-2}}^{\mathcal A_N}\\
&+\underbrace{3c_\theta^2\alpha^{6H-8}\sum_{i,j,j'=1}^N|i-j|^{2H-4}|i-j'|^{2H-2}|j-j'|^{2H-2}O(1)}_{\mathcal B_N}\biggl]
      \end{align*}
      the first factor behaves as:
      \begin{align*}
        \mathcal A_N&=c_\theta^3\alpha^{6H-3}N^{6H-3}\sum_{i,j,j'=1}^N\left|\frac{i-j}{N}\right|^{2H-2}\left|\frac{i-j'}{N}\right|^{2H-2}\left|\frac{j-j'}{N}\right|^{2H-2}\frac{1}{N^3}\\
        &\asymp N^{6H-3}\iiint_{[0,1]^3}|x-y|^{2H-2}|x-y'|^{2H-2}|y-y'|^{2H-2}dxdydy'
      \end{align*}
      and the second:
      \begin{align*}
        \mathcal B_N&=c_\theta^3\alpha^{6H-3}N^{6H-5}\sum_{i,j,j'=1}^N\left|\frac{i-j}{N}\right|^{2H-2}\left|\frac{i-j'}{N}\right|^{2H-4}\left|\frac{j-j'}{N}\right|^{2H-2}\frac{1}{N^3}\\
        &\leq c_\theta^3\alpha^{6H-3}2^{-2}N^{6H-3}\sum_{i,j,j'=1}^N\left|\frac{i-j}{N}\right|^{2H-2}\left|\frac{i-j'}{N}\right|^{2H-2}\left|\frac{j-j'}{N}\right|^{2H-2}\frac{1}{N^3}\\
        &\asymp N^{6H-3}\iiint_{[0,1]^3}|x-y|^{2H-2}|x-y'|^{2H-2}|y-y'|^{2H-2}dxdydy'
      \end{align*}
      and the previous integral is finite for any $H>\frac 1 2$. Hence, $\mathcal P_N$ is bounded asymptotically, for $H=\frac 3 4$, by:
      \begin{align*}
        \frac{N^2\cdot N^{6H-3}}{N^4\log N}=\frac{1}{N^{5-6H}\log N}\longrightarrow 0
      \end{align*}
      and if $H>\frac 3 4$, it is bounded by:
      \begin{align*}
        \frac{N^{8-8H}\cdot N^{6H-3}}{N^4}=N^{1-2H}\longrightarrow 0
      \end{align*}
    \item[Case IV:] $i\neq i'$, $j\neq j'$ and $i\neq j$

      As we did in the previous case, the rates of convergence when at least two indices are far apart by less than two units are the same as the ones calculated in Cases I-II-III. So let us assume that $|i-i'|\geq 2$, $|j-j'|\geq 2$ and $|i-j|\geq 2$.
      \begin{align*}
        \mathcal P_N&=\frac{\tilde D_N(\theta)^4}{N^4}\biggl[\overbrace{c_\theta^4\alpha^{8H-8}\sum_{i,j,i',j'=1}^N|i-j|^{2H-2}|i'-j'|^{2H-2}|i-j'|^{2H-2}|i'-j|^{2H-2}}^{\mathcal C_N}\\
&+\underbrace{4c_\theta^3\alpha^{8H-10}\sum_{i,j,i',j'=1}^N|i-j|^{2H-2}|i'-j'|^{2H-2}|i-j'|^{2H-2}|i'-j|^{2H-4}}_{\mathcal D_N}\biggl]
      \end{align*}
      and note that the first summand behaves asymptotically as:
      \begin{align*}
        \mathcal C_N&=c_\theta^4\alpha^{8H-8}N^{8H-4}\sum_{i,j,i',j'=1}^N\left|\frac{i-j}{N}\right|^{2H-2}\left|\frac{i'-j'}{N}\right|^{2H-2}\left|\frac{i-j'}{N}\right|^{2H-2}\left|\frac{i'-j}{N}\right|^{2H-2}\frac{1}{N^4}\\
&\asymp N^{8H-4}\int_{[0,1]^4}(x-y)^{2H-2}(x'-y)^{2H-2}(y-y')^{2H-2}(x-x')^{2H-2}dxdydx'dy'
      \end{align*}
      and the second summand can be asymptotically bounded by:
      \begin{align*}
        c_\theta^3\alpha^{8H-10}N^{8H-4}\int_{[0,1]^4}(x-y)^{2H-2}(x'-y)^{2H-2}(y-y')^{2H-2}(x-x')^{2H-2}dxdydx'dy'
      \end{align*}
      and this quadruple integral is bounded for any $H>\frac 1 2$. Hence if $H=\frac 3 4$, $\mathcal P_N$ is asymptotically bounded by:
      \begin{align*}
        \frac{N^2}{N^4\log N}N^{8H-4}=\frac{1}{\log N}\longrightarrow 0.
      \end{align*}
      However, if $H>\frac 3 4$, the first summand of $\mathcal P_N$ is asymptotically equal to:
      \begin{align*}
        \frac{N^{8-8H}N^{8H-4}}{N^4}=1
      \end{align*}
      then the limit of $\mathcal P_N$ is not 0. Moreover, since $\mathcal D_N$ is asymptotically bounded by $N^{8H-4}$, there exists $k\neq 0$ such that:
      \begin{align*}
        \mathcal P_N\longrightarrow k.
      \end{align*}
    \end{description}
  \end{proof}

\end{lema}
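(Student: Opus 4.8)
The plan is to first reduce $\mathcal P_N$ to a single scalar quadruple sum by expanding the symmetric tensor products. Writing $A_i\tilde{\otimes} A_j=\tfrac12(A_i\otimes A_j+A_j\otimes A_i)$ and invoking the factorization of Hilbert-space inner products of tensor products from Lemma \ref{sec:some-auxil-results}(ii), I would obtain
\[
\langle A_i\tilde{\otimes} A_j, A_{i'}\tilde{\otimes} A_{j'}\rangle_{\Hcal^2}
= \tfrac12\langle A_i,A_{i'}\rangle_\Hcal\langle A_j,A_{j'}\rangle_\Hcal
+ \tfrac12\langle A_i,A_{j'}\rangle_\Hcal\langle A_j,A_{i'}\rangle_\Hcal .
\]
After relabeling the summation indices the two terms contribute identically, so that
\[
\mathcal P_N=\frac{\tilde D_N(\theta)^4}{N^4}\sum_{i,j,i',j'=1}^N
\langle A_i,A_{i'}\rangle_\Hcal\langle A_j,A_{j'}\rangle_\Hcal
\langle A_i,A_j\rangle_\Hcal\langle A_{i'},A_{j'}\rangle_\Hcal .
\]
Using the isometry identity $\langle A_m,A_n\rangle_\Hcal=\rho_\theta(\alpha|m-n|)$, this becomes a sum of products of four autocovariances forming a four-cycle through the indices $i,i',j',j$, which is the object whose asymptotics I must control.

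The core of the argument is an asymptotic analysis of this quadruple sum based on the covariance expansion \eqref{expancorr}, namely $\rho_\theta(\alpha m)=c_\theta\alpha^{2H-2}m^{2H-2}+O(m^{2H-4})$. The strategy I would follow is to split the sum according to the pattern of coincidences among the four indices, isolating the fully off-diagonal block (all four indices mutually separated by at least two lattice steps) as the leading contribution and treating the remaining blocks, where indices collapse or are adjacent, as lower-order corrections. On the off-diagonal block each factor is replaced by its power-law leading term; extracting the normalization $N^{2H-2}$ from each of the four factors turns the block into a Riemann sum for
\[
I(H):=\int_{[0,1]^4}|x-x'|^{2H-2}|y-y'|^{2H-2}|x-y|^{2H-2}|x'-y'|^{2H-2}\,dx\,dy\,dx'\,dy',
\]
which is finite precisely because $2H-2>-1$. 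A power count then shows this block scales like $\tilde D_N(\theta)^4 N^{8H-8}$; substituting the two definitions of $\tilde D_N(\theta)$ produces a strictly positive constant $k=\tilde c_2^{-2}c_\theta^4\alpha^{8H-8}I(H)>0$ when $H>\frac34$ (the integrand being nonnegative and not identically zero), while the logarithmic normalization present at $H=\frac34$ leaves a vanishing power of $\log N$ and forces $\mathcal P_N\to 0$.

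Finally I would verify that all the lower-dimensional blocks are negligible for every $H\ge\frac34$: in each such block one loses a free summation variable while gaining at most a bounded or $|\cdot|^{2H-2}$-type factor, so a direct comparison of exponents shows each contributes $o(1)$, and the error terms $O(m^{2H-4})$ from \eqref{expancorr} are dominated termwise by the corresponding leading terms and therefore also vanish. I expect the main obstacle to be the off-diagonal block: one must rigorously justify the passage from the discrete sum to the singular quadruple integral, controlling uniformly in $N$ the integrable singularities along $\{x=y\}$, $\{x'=y'\}$, $\{x=x'\}$ and $\{y=y'\}$, and separately confirm at the critical exponent $H=\frac34$ that the $\log N$ calibration built into $\tilde D_N(\theta)$ is exactly what tips the dichotomy, sending $\mathcal P_N$ to $0$ rather than to a positive limit.
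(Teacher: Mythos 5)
Your proposal is correct and follows essentially the same route as the paper's own proof: the same symmetrization identity $\langle A_i\tilde\otimes A_j,A_{i'}\tilde\otimes A_{j'}\rangle_{\Hcal^2}=\frac12\langle A_i,A_{i'}\rangle\langle A_j,A_{j'}\rangle+\frac12\langle A_i,A_{j'}\rangle\langle A_j,A_{i'}\rangle$ reducing $\mathcal P_N$ to a four-cycle covariance sum, the same decomposition by index-coincidence patterns with the fully off-diagonal block as the leading term, the same Riemann-sum passage to the singular quadruple integral, and the same power count showing the $\log N$ calibration at $H=\frac34$ forces the limit to zero while for $H>\frac34$ the normalization $N^{8-8H}$ yields a nonzero limit. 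As a minor bonus, you correctly state the symmetrization identity (the paper's displayed version contains a typo, writing $\langle A_j,A_{j'}\rangle$ where $\langle A_j,A_{i'}\rangle$ is meant) and you identify the limit explicitly as $k=\tilde c_2^{-2}c_\theta^4\alpha^{8H-8}I(H)>0$, whereas the paper only asserts $k\neq 0$.
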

\begin{lema}\label{sec:some-auxil-results-3}
  If $\mathcal I_N$ is defined as (\ref{eq:8}) and $H>\frac 3 4$, then it is not a Cauchy sequence in $\Hcal^2$.
  \begin{proof}
    We will prove the statement by contradiction. Assume that $\mathcal I_N$ is Cauchy. First of all, note that $\mathcal I_N$ is bounded in $\Hcal^2=L^2((-\pi,\pi)^2)$:
    \begin{align*}
      \|\mathcal I_N(\cdot,\cdot)\|_{\Hcal^2}^2&=\frac{N^{2-4H}}{\tilde c_2}\sum_{i,j=1}^N\langle A_i \otimes A_i, A_j \otimes A_j\rangle_{\Hcal^2}=\frac{N^{2-4H}}{\tilde c_2}\sum_{i,j=1}^N\langle A_i, A_j\rangle_{\Hcal}^2\\
      &=\frac{N^{4-4H}}{\tilde c_2}\frac{1}{N^2}\left[N\rho_\theta(0)+2(N-1)\rho_\theta(1)+\sum_{|i-j|\geq 2}[\rho_\theta(t_i-t_j)]^2\right]\\
      &\leq M_1N^{3-4H}+\frac{N^{2-4H}}{\tilde c_2}\sum_{|i-j|\geq 2}[c_\theta[\alpha(i-j)^{2H-2}]+O(\alpha(i-j)^{2H-4})]^2\\
      &=M_1N^{3-4H}+\frac{1}{\tilde c_2}c_\theta^2\alpha^{4H-4}\sum_{|i-j|\geq 2}\left|\frac{i-j}{N}\right|^{4H-4}\frac{1}{N^2}\\
& \qquad+\alpha^{4H-6}N^{3-4H}\sum_{k=2}^N\left(\frac{N-k}{N}\right)k^{4H-6}\\
&<M_1+\frac 1 2+\alpha^{4H-6}M_2<\infty
    \end{align*}
    where $M_1:=\frac{\rho_\theta(0)+2\rho_\theta(1)}{\tilde c_2}$ and $M_2:=\sum_{k=1}^\infty k^{4H-6}$. This implies that $\mathcal I_N$ is dominated in $L^2((-\pi/\alpha,\pi/\alpha)^2)$. 

Also note that the pointwise limit of $\mathcal I_N$ is (if $H>\frac 1 2$): 
    \begin{align*}
      |\mathcal I_N|&=\left|\frac{N^{1-2H}}{\sqrt{\tilde c_2}}\sum_{j=1}^N \exp(irt_j)\exp(ist_j)\bar f_{\theta,\alpha}(r)\bar f_{\theta,\alpha}(r)\right|\\
      &=\frac{N^{1-2H}}{\sqrt{\tilde c_2}}\bar f_{\theta,\alpha}(r)\bar f_{\theta,\alpha}(r)\left|\sum_{j=1}^N \exp(i\alpha rj)\exp(i\alpha sj)\right|
      \asymp N^{1-2H}\longrightarrow 0
    \end{align*}
    since:
    \begin{align*}
      \left|\sum_{j=1}^N \exp(i\alpha rj)\exp(i\alpha
        sj)\right|&<\left|\sum_{j=0}^\infty\exp(i\alpha rj)\exp(i\alpha
        sj)-1\right|\\
      &\leq \left|\frac{\exp[i\alpha(r+s)]}{1-\exp[i\alpha(r+s)]}\right|<\infty.
    \end{align*}
Then we can apply the dominated convergence theorem to conclude that:
    \begin{align*}
      E\left[\left|\frac{N^{2-2H}}{\sqrt{\tilde c_2}}\hat g_N(\theta)\right|^2\right]=E\left[|I_1(\mathcal I_N(\cdot,\cdot))|^2\right]=2\|\mathcal I_N(\cdot ,\cdot)\|_{\Hcal^2}^2\longrightarrow 0
    \end{align*}
    but this contradicts the fact that:
    \begin{align*}
      E\left[\left|\frac{N^{2-2H}}{\sqrt{\tilde c_2}}\hat g_N(\theta)\right|^2\right]\longrightarrow 1
    \end{align*}
    which was already evidenced in the proof of Theorem \ref{sec:fract-ornst-uhlenb}. Hence, by contradiction, $\mathcal I_N$ is not Cauchy.  
  \end{proof}
\end{lema}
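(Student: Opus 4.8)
The plan is to argue by contradiction, exploiting the tension between pointwise decay and persistence of $L^2$-energy. Suppose $\mathcal I_N$ were Cauchy in the Hilbert space $\Hcal^2=L^2\big((-\pi/\alpha,\pi/\alpha)^2\big)$. Being Cauchy, it would converge in $\Hcal^2$ to some limit $g$, and by passing to a subsequence we may assume $\mathcal I_N\to g$ almost everywhere. My strategy is to identify this a.e.\ limit explicitly, show it is $0$, and then contradict the conclusion $\|\mathcal I_N\|_{\Hcal^2}\to 0$ that this would force, using the second-moment asymptotics already established in the proof of Proposition \ref{sec:fOUgbehavior}.

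First I would compute the pointwise limit. Writing the kernel from \eqref{eq:9} with $t_j=j\alpha$, one has $(A_j\otimes A_j)(r,s)=e^{ij\alpha(r+s)}\bar f_{\theta_0}(r)^{1/2}\bar f_{\theta_0}(s)^{1/2}$, so that $\mathcal I_N(r,s)=\tilde c_2^{-1/2}N^{1-2H}\bar f_{\theta_0}(r)^{1/2}\bar f_{\theta_0}(s)^{1/2}\sum_{j=1}^N e^{ij\alpha(r+s)}$. Away from the measure-zero set where $\alpha(r+s)\in 2\pi\mathbb Z$, the geometric sum is bounded uniformly in $N$ by $2\,|1-e^{i\alpha(r+s)}|^{-1}$, while $N^{1-2H}\to 0$ because $H>\tfrac12$. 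Hence $\mathcal I_N\to 0$ almost everywhere, which forces $g=0$ and therefore $\|\mathcal I_N\|_{\Hcal^2}\to 0$.

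The contradiction comes from the norm. Since each $A_j\otimes A_j$ is symmetric by Lemma \ref{sec:some-auxil-results}(i), the kernel $\mathcal I_N$ is symmetric and the Wiener--It\^o isometry gives $E\big[|I_2(\mathcal I_N)|^2\big]=2\|\mathcal I_N\|_{\Hcal^2}^2$. On the other hand $I_2(\mathcal I_N)=\tilde c_2^{-1/2}N^{2-2H}\hat g_N(\theta)$, and the $L^2$-computation carried out in the proof of Proposition \ref{sec:fOUgbehavior} for the case $H>\tfrac34$ shows that $E\big[|\tilde c_2^{-1/2}N^{2-2H}\hat g_N(\theta)|^2\big]\to 1$. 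Consequently $\|\mathcal I_N\|_{\Hcal^2}^2\to\tfrac12$, which is incompatible with the limit $0$ obtained above; hence $\mathcal I_N$ cannot be Cauchy.

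The main obstacle is conceptual rather than computational: the sequence must decay to zero pointwise yet retain a fixed amount of $L^2$-mass, which is exactly the signature of a sequence that converges weakly but not strongly, and is precisely the obstruction to the Cauchy property. I would make sure the almost-everywhere step is airtight, since the exponential sum can be of order $N$ on the null set $\alpha(r+s)\in 2\pi\mathbb Z$; the safe route is to restrict the pointwise analysis to the complement of that set and then invoke uniqueness of the $\Hcal^2$-limit and the a.e.\ limit, rather than attempting a dominated-convergence bound, for which no genuine integrable dominating function exists precisely because the energy does not vanish.
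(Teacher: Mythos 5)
Your proof is correct, and at the global level it follows the same contradiction as the paper's: the geometric sum is bounded off the null set $\{\alpha(r+s)\in 2\pi\mathbb Z\}$, so $N^{1-2H}\to 0$ forces $\mathcal I_N\to 0$ pointwise a.e., while the isometry $E\bigl[|I_2(\mathcal I_N)|^2\bigr]=2\|\mathcal I_N\|_{\Hcal^2}^2$ together with $E\bigl[|\tilde c_2^{-1/2}N^{2-2H}\hat g_N(\theta)|^2\bigr]\to 1$ pins the $L^2$ mass at $\tfrac 1 2$. Where you genuinely diverge is in converting the pointwise limit into norm convergence: the paper bounds $\|\mathcal I_N\|_{\Hcal^2}$ uniformly, asserts the sequence is ``dominated,'' and invokes the dominated convergence theorem to get $\|\mathcal I_N\|_{\Hcal^2}\to 0$. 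As written, that step has a gap --- uniform $L^2$-boundedness does not supply an integrable dominating function, and the natural pointwise envelope $C\,\bar f_{\theta_0}(r)\bar f_{\theta_0}(s)\,|1-e^{i\alpha(r+s)}|^{-2}$ is non-integrable across the lines $\alpha(r+s)\in 2\pi\mathbb Z$; as you observe, no dominating function can exist precisely because the energy does not vanish. Your substitute --- Cauchy implies a strong limit $g$ exists, a subsequence converges a.e.\ to $g$, the pointwise computation identifies $g=0$, hence $\|\mathcal I_N\|_{\Hcal^2}\to 0$ --- is the airtight route: it uses the Cauchy hypothesis exactly where it is needed and renders the paper's uniform norm bound superfluous. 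Two further points in your favor: you correctly use $I_2$ (with the symmetry of $\mathcal I_N$ from Lemma \ref{sec:some-auxil-results}(i) justifying the factor $2$) where the paper's final display misprints $I_1$, and your kernel $(A_j\otimes A_j)(r,s)=e^{ij\alpha(r+s)}\bar f_{\theta_0}(r)^{1/2}\bar f_{\theta_0}(s)^{1/2}$ repairs the paper's expression, which omits the square roots and repeats the argument $r$; you also cite the second-moment limit from the proof of Proposition \ref{sec:fOUgbehavior}, which is where it is actually established, rather than the theorem the paper points to.
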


\end{document}